\documentclass[10pt]{amsart}

\usepackage[sc]{mathpazo}

\usepackage{mathtools}
\usepackage{amssymb}
\usepackage{amsfonts}

\usepackage{graphicx}
\usepackage{hyperref}
\usepackage{tabularx}
\usepackage{multirow}
\usepackage{tikz-cd}
\usepackage{svg}
\usepackage{xcolor}
\usepackage{enumitem}
\usepackage{bm}
\usepackage{todonotes}

\DeclareFontFamily{U}{mathx}{}
\DeclareFontShape{U}{mathx}{m}{n}{<-> mathx10}{}
\DeclareSymbolFont{mathx}{U}{mathx}{m}{n}
\DeclareMathAccent{\widehat}{0}{mathx}{"70}
\DeclareMathAccent{\widecheck}{0}{mathx}{"71}

\theoremstyle{plain}
\newtheorem{theorem}{Theorem}
\newtheorem{proposition}{Proposition}[section]
\newtheorem{lemma}[proposition]{Lemma}
\newtheorem{corollary}[proposition]{Corollary}
\theoremstyle{definition}

\newtheorem{definition}[proposition]{Definition}

\newtheorem{example}[proposition]{Example}
\theoremstyle{remark}
\newtheorem{remark}[proposition]{Remark}

\newcommand{\Z}{\mathbb{Z}}
\newcommand{\Q}{\mathbb{Q}}
\newcommand{\R}{\mathbb{R}}

\newcommand{\RP}{\mathbb{RP}}
\newcommand{\CP}{\mathbb{CP}}

\newcommand{\fA}{\mathfrak{A}}
\newcommand{\fB}{\mathfrak{B}}
\newcommand{\fC}{\mathfrak{C}}

\newcommand{\fF}{\mathfrak{F}}

\newcommand{\fU}{\mathfrak{U}}

\newcommand{\fa}{\mathfrak{a}}
\newcommand{\fb}{\mathfrak{b}}
\newcommand{\fc}{\mathfrak{c}}

\newcommand{\fp}{\mathfrak{p}}
\newcommand{\fq}{\mathfrak{q}}

\newcommand{\fs}{\mathfrak{s}}

\newcommand{\bfS}{\mathbf{S}}

\newcommand{\bfd}{\mathbf{d}}

\newcommand{\cB}{\mathcal{B}}
\newcommand{\cC}{\mathcal{C}}
\newcommand{\cD}{\mathcal{D}}

\newcommand{\cF}{\mathcal{F}}
\newcommand{\cG}{\mathcal{G}}

\newcommand{\cK}{\mathcal{K}}
\newcommand{\cL}{\mathcal{L}}
\newcommand{\cM}{\mathcal{M}}
\newcommand{\cN}{\mathcal{N}}
\newcommand{\cO}{\mathcal{O}}

\newcommand{\cS}{\mathcal{S}}
\newcommand{\cT}{\mathcal{T}}

\newcommand{\cV}{\mathcal{V}}

\newcommand{\bJ}{\mathbb{J}}

\newcommand{\colim}{\operatorname{colim}}
\newcommand{\im}{\operatorname{im}}
\newcommand{\coker}{\operatorname{coker}}

\DeclareMathOperator{\Tr}{Tr}    

\newcommand{\fo}{\mathfrak{o}}

\newcommand{\Spin}{\operatorname{Spin}}
\newcommand{\Pin}{\operatorname{Pin}}

\newcommand{\spin}{\mathrm{spin}}
\newcommand{\spinc}{\spin^{c}}

\newcommand{\CMbar}{\overline{CM}}
\newcommand{\CMto}{\widecheck{CM}}
\newcommand{\CMfrom}{\widehat{CM}}
\newcommand{\HMbar}{\overline{HM}}
\newcommand{\HMto}{\widecheck{HM}}
\newcommand{\HMfrom}{\widehat{HM}}

\newcommand{\HMred}{HM}

\newcommand{\dbar}{\bar{\partial}}
\newcommand{\dcheck}{\widecheck{\partial}}
\newcommand{\dhat}{\widehat{\partial}}

\usepackage{nicematrix}

\newcommand{\tildeX}{\tilde{X}}

\newcommand{\ucM}{\breve{\cM}}
\newcommand{\vinfty}{\bm{\infty}}
\newcommand{\W}{\breve{W}}
\newcommand{\EW}{E\W}
\newcommand{\ev}{\operatorname{ev}}

\newcommand{\Cr}{\mathfrak{Crit}}

\newcommand{\gr}{\operatorname{gr}}
\newcommand{\Gr}{\operatorname{Gr}}

\newcommand{\mbar}{\bar{m}}
\newcommand{\mcheck}{\widecheck{m}}
\newcommand{\mhat}{\widehat{m}}

\newcommand{\Wbar}{\overline{W}}
\newcommand{\Wcheck}{\widecheck{W}}

\title{Miyazawa's Invariant, Lefschetz Numbers, and Seifert Solids}
\author{Judson Kuhrman}

\begin{document}

\begin{abstract}
    We establish a formula expressing Miyazawa's 2-knot invariant $|\deg|$ in terms of the Lefschetz number of a map on ordinary (i.e., not real) monopole Floer homology. As an application, we deduce that $|\deg|=1$ for any 2-knot in $S^4$ which has a punctured $L$-space as a Seifert solid. In the course of the proof of the main theorem, we show how Francesco Lin's construction of monopole Floer homology with $\Pin(2)$-equivariant perturbations can be made to work with integer coefficients.
\end{abstract}

\maketitle

\vspace{-11pt}

\section{Introduction}\label{section:introduction}

The study of real Seiberg--Witten gauge theory has garnered much interest recently due to the apparent potential for applications to the study of exoticism in smooth 4-dimensional topology. Of particular interest is the $\Z^{\geq 0}$-valued real Bauer-Furuta degree invariant $|\deg|$, which is defined for certain 4-manifolds equipped with real $\spinc$ structures, and specializes to an invariant of 2-knots and $\RP^2$-knots embedded in $S^4$ by way of their double branched covers with their unique real $\spin$ structures. The invariant $|\deg|$, derived from the real Bauer-Furuta invariant defined in \cite{konno-miyazawa-taniguchi}, was used in \cite{miyazawa} to prove the existence of smoothly exotic, topologically unknotted embeddings $\RP^2\hookrightarrow S^4$. Despite the recent attention this invariant has received, methods for computing the invariant are sparse and it remains largely unclear what type of information it contains.

Let $\cS:S^2\hookrightarrow S^4$ be a (smooth) 2-knot. Suppose that $Y^\circ\hookrightarrow S^4$ is a Seifert solid for $\cS$, i.e., an orientable hypersurface with $\partial Y^\circ = \cS$, and let $Y$ be the corresponding closed 3-manifold. We can associate to $\cS,Y$ a cobordism $W:Y\to Y$, as follows. Let $X=X(\cS)$ be the 4-manifold obtained from surgery on $S^4$ along $\cS$. Then, we take $W$ to be $X$-cut-along-$Y$. The homology of $X$ is that of $S^1\times S^3$, and so there exists a unique double cover $\tildeX\to X$. Let $\fs_{X}$ be the unique $\spinc$ structure on $X$ and let $\fs_{Y},\fs_{W}$ be the $\spinc$ structures restricted to $Y,W$, respectively. The double cover $\tildeX$ can be built explicitly by gluing two copies of $W$ end to end. There is a unique real $\spinc$ structure on $\tildeX$, and the underlying $\spinc$ structure is that pulled back from $X$. The invariant $|\deg|$ is defined, and $|\deg(\tildeX)|=|\deg(\cS)|$ \cite[Proposition~7.3]{miyazawa-park-taniguchi}. In the case that $Y$ is a rational homology sphere, we will express the invariant $|\deg(\cS)|$ in terms of a cobordism map induced by $W$ on the ``ordinary" monopole Floer homology of $Y$, twisted by an involution $\jmath_*$. The $\spinc$ structure $\fs_Y$ is induced by a $\spin$ structure, and the involution $\jmath_*$ is the complex conjugation isomorphism induced by $\bar\fs_Y\cong\fs_Y$. Our main result is the following.

\begin{theorem}\label{thm:deg-formula}
    Let $X$ be a homology $S^1\times S^3$ and $\tildeX\to X$ its unique double cover. Suppose that there exists an embedded oriented rational homology 3-sphere $Y\hookrightarrow X$ representing a generator of $H_3(X;\Z)\cong\Z$. Let $W$ be the cobordism $X$-cut-along-$Y$ and let $W_*$ be the map induced by $(W,\fs_W)$ on the reduced Floer homology $\HMred(Y,\fs_Y)$ with respect to the canonical homology orientation on $W$. Then, \begin{equation*}
        |\deg(\tildeX)| =|1+2\Tr(\,\jmath_*(W_* - 1)\,)|
    \end{equation*}
    where $\Tr$ is the Lefschetz number. In particular, the above formula applies when $X$ is surgery on a 2-knot which bounds a rational homology ball. 
\end{theorem}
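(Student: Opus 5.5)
We plan to compute $|\deg(\tildeX)|$ through a real (i.e.\ $\bJ$-equivariant) monopole Floer theory on $Y$ and then reduce the real count to ordinary Floer data by a localization/Lefschetz argument. The real involution on $\tildeX$ is the deck transformation $\tau$, and $\tildeX=W\cup_{Y\sqcup Y} W$ with $\tau$ interchanging the two copies of $W$. Cutting $\tildeX$ along one copy of $Y$ therefore exhibits the real Bauer--Furuta degree as the ``real trace'' of the cobordism $W\cup W$ read $\tau$-equivariantly. Equivalently, $\tau$-invariant configurations on $\tildeX$ are configurations on a single copy of $W$, with the end over $Y$ glued to the conjugate of the other end. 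The conjugation isomorphism $\bar\fs_Y\cong\fs_Y$ enters exactly here, producing $\jmath$. So the first step is to show that $\deg(\tildeX)$ equals, up to sign, a Lefschetz-type count $\sum_{\fa}\epsilon(\fa)$ of solutions on $W$ with periodic-conjugate boundary conditions. This is a sum over critical points of a $\Pin(2)$-equivariantly perturbed Chern--Simons--Dirac functional. We do this by neck-stretching along $Y$ and using Lin's $\Pin(2)$-equivariant perturbations, so that $\jmath$ acts on the chain complex at the chain level. Integer coefficients are needed so that the signs $\epsilon$ are meaningful.

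Next, we split the count into reducible and irreducible contributions using the decomposition of monopole Floer homology of a rational homology sphere into the tower $\HMbar$ and the reduced part $\HMred(Y,\fs_Y)$.

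The reducible contribution is computed by a model calculation in the $S^1$-tower, where $W_*$ acts as the identity because $W$ is a homology product. We expect it to produce the constant $1$; a sanity check is the product case $W=[0,1]\times Y$, where $|\deg(S^1\times S^3)|=1$. Any correction coming from the tower, such as $d$-invariant shifts, should cancel because $W$ has $b_2=0$ and $\sigma=0$.

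The irreducible contribution is a chain-level Lefschetz number. By the Lefschetz trace formula on finitely generated free chain complexes, the alternating count of fixed points of $\jmath\circ W$ equals $\Tr(\jmath_*W_*)$ on homology. The reduced part is normalized by subtracting the identity contribution, giving $\Tr(\jmath_*(W_*-1))$. We subtract because the product cobordism must give $|\deg|=1$. The factor $2$ should arise because irreducible $\tau$-invariant solutions on $\tildeX$ come in pairs under the residual $\Z/2\subset\Pin(2)$, namely the action of $j$ versus $-j$. The reducible solution is fixed by this action, hence counted once. The absolute value absorbs the global orientation ambiguity, and the canonical homology orientation of $W$ fixes the relative signs.

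The main obstacle is the first step. We must identify the real Bauer--Furuta degree, defined via finite-dimensional approximation, with a gluing count in monopole Floer theory, and control the sign conventions so that the reducible term is exactly $1$ and the relative sign between the two terms is $+$. We would first try to compare both sides on the family of cobordisms $W$ and reduce to the product case by a cobordism/excision argument, using functoriality of $W_*$ and the invariance of $\deg$. The final assertion then follows by taking $Y$ to be the closed-up Seifert solid, whose punctured form is the rational ball.
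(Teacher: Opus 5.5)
There is a genuine gap at the step that turns the solution count into a trace on $\HMred$, and this is exactly where the ``$-1$'' in $W_*-1$ comes from. Neck-stretching gives $|\deg(\tildeX)|=|1+2\Tr(\jmath W^o)|$, where $W^o$ is the chain-level map on the irreducible summand $C^o$ alone. This uses $\cM^*_\Re=\cM^*(\tildeX)^I$ and the fact that the framed real moduli space has one reducible point and double-covers the irreducibles, which you describe correctly.

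The Lefschetz trace formula does not apply to this count directly. $C^o$ is neither a subcomplex nor a quotient complex computing $\HMred$. Also, $\CMto$ is not finitely generated, because of the reducible tower and the $\delta$-chains on the reducible $\CP^1$ families. So your identity ``alternating count $=\Tr(\jmath_*W_*)$'' is missing a term. Your subsequent subtraction of the identity contribution is justified only by requiring the product case to give $1$, so it assumes the answer rather than deriving it.

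The paper's mechanism is as follows. Truncate $\CMto$ at grading $\le N$. Show that $\Wcheck$ preserves the grading filtration, using $d(W,\fs_W)=0$ and the vanishing of the corner term $-\partial_o^u w\Wbar_u^s$; then the supertrace on $\HMto_{\le N}$ can be computed on the $E^1$ page. The reducible part of $E^1$ contributes $0$, since $W$ acts as the identity there and $\jmath$ acts on each reducible $\CP^1$ by the antipodal map. Splitting $\HMto_{\le N}$ via $0\to\im i_*\to\HMto_{\le N}\to\HMred\to0$ then gives $\Tr(\jmath W^o)=\Tr(\jmath_*|_{\im i_*})+\Tr(\jmath_*W_*|_{\HMred})$. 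Apply the same identity to the trivial cobordism. There the left side vanishes because $\jmath$ has no fixed irreducible critical points, so $\Tr(\jmath_*|_{\im i_*})=-\Tr(\jmath_*|_{\HMred})$. That is the source of $W_*-1$.

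This also shows that your bookkeeping of reducibles conflates two different objects. The constant $1$ is the single real reducible on the closed manifold $\tildeX$, counted once in $\cM^f_\Re$. It is not the output of a model computation in the tower of $Y$. The tower instead contributes $\Tr(\jmath_*|_{\im i_*})=-\Tr(\jmath_*|_{\HMred})$, which is $0$ for $Y=S^3$, not $1$.

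Likewise, the absence of a Fr\o yshov-type correction is not explained by $b_2(W)=\sigma(W)=0$. That holds equally in the Lin--Ruberman--Saveliev setting, where $h(Y)$ does appear. The correction disappears here because the antipodal map on $\CP^1$ has Lefschetz number $0$.

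Finally, your fallback of reducing to the product case by a cobordism or excision argument has no evident mechanism. The paper instead proves the identification directly by compactness and gluing as in Lin--Ruberman--Saveliev. It adds an orientation comparison: the sign $(-1)^{\gr^{(2)}[\fa]}$ from the duality between orientation sets over $Y$ and $-Y$. That comparison is what makes the count a supertrace rather than a plain trace.
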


The majority of this paper is aimed at proving Theorem \ref{thm:deg-formula}. As an immediate corollary to Theorem \ref{thm:deg-formula}, we observe that $|\deg|$ restricts the topology of Seifert solids which a 2-knot can bound. To our knowledge, this is the first result of its type for invariants coming from real Seiberg--Witten theory. \begin{corollary}\label{cor:4d-l-space}
    If a 2-knot $\cS:S^2\hookrightarrow S^4$ bounds a punctured L-space embedded in $S^4$, then $|\deg(\cS)|=1$.
\end{corollary}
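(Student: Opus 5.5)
We deduce Corollary \ref{cor:4d-l-space} directly from Theorem \ref{thm:deg-formula}. The key point is that for an L-space the reduced Floer homology vanishes, so the Lefschetz term disappears.

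First we pass from the 2-knot to the setting of the theorem. Let $Y^\circ\hookrightarrow S^4$ be the punctured L-space with $\partial Y^\circ=\cS$, and let $Y$ be the closed L-space obtained by capping off. An L-space is by definition a rational homology sphere. Let $X=X(\cS)$ be surgery on $S^4$ along $\cS$. Then $Y^\circ$ extends across the surgery region to a closed embedded copy of $Y$ in $X$: the surgery replaces $S^2\times D^2$ by $D^3\times S^1$, and $Y^\circ$ is capped off by a $D^3\times\{pt\}$.

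Next we check that $Y$ represents a generator of $H_3(X;\Z)\cong\Z$. The class $[Y]$ pairs to $1$ with the class of the circle $\{0\}\times S^1\subset D^3\times S^1$, since that circle meets $Y$ transversely in exactly one point. This circle generates $H_1(X)\cong\Z$. So $[Y]$ is primitive, hence a generator. This is the situation the theorem's last sentence already covers: $Y^\circ$ with a ball removed-and-filled is a rational homology ball bounded by $\cS$. Theorem \ref{thm:deg-formula} therefore applies with $|\deg(\cS)|=|\deg(\tildeX)|$, the latter equality being \cite[Proposition~7.3]{miyazawa-park-taniguchi}.

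Finally we compute the Lefschetz term. Since $Y$ is an L-space, the reduced monopole Floer homology $\HMred(Y,\fs_Y)$ vanishes for every $\spinc$ structure, in particular for $\fs_Y$. Hence $\jmath_*(W_*-1)$ is an endomorphism of the zero group, and its Lefschetz number is $0$. The formula then gives
\[
|\deg(\cS)|=|\deg(\tildeX)|=|1+2\cdot 0|=1.
\]

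The only possible subtlety is in identifying $\HMred$ in the theorem with the reduced group that vanishes for L-spaces, namely the image of $\HMto\to\HMfrom$, or equivalently $\HMfrom$ modulo the tower. With integer coefficients, the L-space condition gives exactly that $\HMfrom$ is a single tower with no torsion. So the reduced group is zero, and the argument goes through.
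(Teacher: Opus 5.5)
Your proposal is correct and is exactly the route the paper intends. The corollary is presented as an immediate consequence of Theorem~\ref{thm:deg-formula}, via its final sentence, since a punctured L-space is a rational homology ball; the L-space hypothesis gives $\HMred(Y,\fs_Y)=0$, so the Lefschetz term vanishes and $|\deg(\cS)|=|1+2\cdot 0|=1$. On the subtlety you flag: a Lefschetz number only sees $\HMred(Y,\fs_Y)\otimes\Q$, so it already suffices to know that $\HMred(Y,\fs_Y)$ is torsion, and the coefficient convention used to define an L-space does not matter.
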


Our Theorem \ref{thm:deg-formula} is a real analogue of \cite[Theorem~A]{lin-ruberman-saveliev}, which gives a formula relating the count of ordinary Seiberg--Witten solutions on a homology $S^1\times S^3$ to the Lefschetz number of the map induced by a cobordism in a similar way. Unlike in \cite{lin-ruberman-saveliev}, our formula does not see the appearance of either a Fr\o yshov-invariant-type term nor a metric correction term. The metric correction term does not appear for the simple reason that the real irreducible is generically transverse; indeed, that this is possible is one of the main points of departure of real Seiberg--Witten theory from the ordinary theory. The Fr\o yshov term does not appear due to the form of the monopole chain complex for a $\Pin(2)$-equivariant perturbation; see Subsection \ref{subsection:the-proof}.

The first technical input to the proof of Theorem \ref{thm:deg-formula} is a chain-level definition of $\jmath_*$ that preserves the perturbation. This is dealt with in Section \ref{section:floer-homology}, where we show how Francesco Lin's construction of monopole Floer homology with $\Pin(2)$-equivariant perturbations \cite{f-lin} can be upgraded from a theory with $\Z/2$-coefficients to a theory with $\Z$-coefficients. Once this is accomplished, we can define $\jmath$ as the involution induced by $j\in\Pin(2)$.

The second technical input to the proof of Theorem \ref{thm:deg-formula} is the comparison in Section \ref{section:solution-counts} of various Seiberg--Witten moduli spaces and their orientations, based on that in \cite{lin-ruberman-saveliev}. As part of the appeal of real Seiberg--Witten theory is its apparent departure from the ordinary theory, it may seem counterintuitive to try to interpret the real invariant in terms of an ordinary one. The bridge across this gap is the relationship between real monopoles on double covers and ordinary $\spin^{c-}$ monopoles on the base.

Theorem \ref{thm:deg-formula} applies to 2-knots which bound rational homology balls. In Section \ref{section:examples}, we give some examples of 2-knots which satisfy this hypothesis and some which do not, and we compare some computations of $|\deg|$ with known results.\\\\
\noindent {\bf Acknowlegements.} Thanks to my advisor, Ciprian Manolescu, for his continual supply of encouragement and helpful insights. Thanks as well to Gary Guth, Jiakai Li, Francesco Lin, Maggie Miller, and Imogen Montague for helpful conversations.


\section{Preliminaries}\label{section:preliminaries}

\noindent In this section, we establish notation and recall the relevant definitions and properties of real $\spinc$ structures and associated invariants from Seiberg-Witten theory. 

\subsection{Real $\mathbf{spin^c}$ Structures on Double Covers}\label{subsection:real-spin-structures} Let $M$ be a smooth, oriented 3-or-4-manifold and $\iota: M\xrightarrow{\sim} M$ a smooth involution. A \emph{real} $spin^c$ structure on $(M,\iota)$ consists of a $\spinc$ structure $\fs=(S,\Gamma)$ on $M$ along with a complex anti-linear involution $I:S\to S$ such that $I$ covers $\iota$ and such that $I$ is compatible with $\Gamma$. Here $S$, respectively $\Gamma$, are the spinor bundle, respectively the Clifford multiplication, associated to $\fs$. The involution $I$ acts on sections of $S$ as well as on other associated objects. Fixed points of these actions are called \emph{real}.

Suppose that $X$ is a smooth 4-manifold with the homology of $S^1\times S^3$. There exists a unique double cover $\tildeX\to X$. The deck transformation $\iota:\tildeX\xrightarrow{\sim} \tildeX$ is a free involution. There is a unique $\spinc$ structure $\fs_{X}=(S_X,\Gamma_X)$ on $X$. Let $\fs_{\tildeX}=(S_{\tildeX},\Gamma_{\tildeX})$ be the pullback to $\tildeX$. Both $\spin$ structures on $X$ pull back to the same $\spin$ structure on $\tildeX$. This $\spin$ structure induces a quaternionic structure on the spinor bundle of $\fs_{\tildeX}$, so we can consider the action of $j\in\Pin(2)$ on spinors, $\Phi\mapsto \Phi\cdot j$. For all $x\in \tildeX$, the fibers $S_x$ and $S_{\iota(x)}$ of the spinor bundle are canonically identified with each other via their mutual identification with the corresponding fiber of $S_X$. Let $\iota^*:S_{\iota(x)}\xrightarrow{\sim} S_{x}$ be this identification. Fix $u:\tildeX\to S^1$ representing a generator of $H^1(\tildeX;\Z)\cong \Z$ such that $\iota^*u=-u$. Define $I_x:S_{x}\to S_{x}$ by \begin{equation*}
    I_x(\Phi) = (u(x)\Phi)\cdot j.
\end{equation*}
and define $I:S_{\tildeX}\to S_{\tildeX}$ by \begin{equation*}
    I(\Phi \in S_x) = \iota^*I_x \Phi = -I_{\iota(x)}(\iota^* \Phi) \in S_{\iota(x)}
\end{equation*}
The involution $I$ induces an involution on sections of $S$ by $I(\Phi)_{\iota(x)} = I(\Phi_x)$.
\begin{lemma}
    The data $(\fs_{\tildeX}, I)$ constitute a real $spin^c$ structure on $\tildeX$.
\end{lemma}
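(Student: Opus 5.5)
To prove the lemma we check three properties of $I$ directly from its definition: it is complex anti-linear and covers $\iota$, it is compatible with Clifford multiplication $\Gamma_{\tildeX}$, and it is an involution, $I^2=1$. We use the following standard facts.
\begin{itemize}
\item Right multiplication by $j$ on the quaternionic spinor bundle is complex anti-linear, since $(z\Phi)\cdot j=\Phi\cdot(zj)=\Phi\cdot(j\bar z)$. It commutes with Clifford multiplication by real tangent vectors, because the quaternionic structure comes from the spin structure and Clifford multiplication is left $\mathbb{H}$-linear while $j$ acts on the right. It squares to $-1$.
\item The canonical identification $\iota^*:S_{\iota(x)}\to S_x$ is complex linear and intertwines $\Gamma$ with $d\iota$, because both fibres are identified with the same fibre of $S_X$ and $\Gamma_{\tildeX}$ is pulled back from $\Gamma_X$.
\item The identification $\iota^*$ commutes with $\cdot j$. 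This holds because $\iota$ preserves the pulled-back spin structure, so it preserves the quaternionic structure.
\end{itemize}

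\textbf{Anti-linearity, covering, and compatibility.} The map $I_x$ is a composite of the complex scalar $u(x)$ with the anti-linear map $\cdot j$, so it is anti-linear. The map $\iota^*$ is linear. Hence $I:S_x\to S_{\iota(x)}$ is anti-linear, and it covers $\iota$ by construction. For compatibility we need $I(\Gamma(v)\Phi)=\Gamma(d\iota\, v)I(\Phi)$. Multiplication by the scalar $u(x)$ commutes with $\Gamma(v)$, and so does $\cdot j$. The map $\iota^*$ carries $\Gamma(v)$ to $\Gamma(d\iota\, v)$. This gives the required identity.

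\textbf{The involution property.} This is the main step, and it is where the choice of the sign and of $u$ with $\iota^*u=-u$ matters. We read $u$ as circle-valued, so that $\iota^*u=-u$ means $u(\iota(x))=\overline{u(x)}=u(x)^{-1}$. Then we compute
\[
I^2\Phi=\iota^*I_{\iota(x)}\,\iota^*I_x\Phi .
\]
First move $\iota^*$ past $I_{\iota(x)}$; this turns $I_{\iota(x)}$ into multiplication by $u(\iota(x))$ followed by $\cdot j$, acting in $S_x$. Since $\iota^*\iota^*=\mathrm{id}$, we are left with
\[
\Psi\mapsto \bigl(u(\iota x)\,((u(x)\Psi)\cdot j)\bigr)\cdot j .
\]
Next commute the scalar through $j$: $(u(\iota x)(\Phi\cdot j))\cdot j=(\overline{u(\iota x)}\Phi)\cdot j^2=-\overline{u(\iota x)}\Phi$. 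Here $\overline{u(\iota x)}=u(x)$, so the sign and conjugation conventions need care. With the paper's second expression $I=-I_{\iota(x)}\iota^*$, one extra sign appears. Carrying all of these through, I expect the factors to combine to $-\overline{u(\iota x)}\,\overline{u(x)}\cdot(\pm1)=+1$. The minus sign from $j^2=-1$ is cancelled by the explicit $-1$ in the definition of $I$, and the two values of $u$ multiply to $1$ because $u\circ\iota=\bar u$.

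The main obstacle is pinning down these conventions consistently. Specifically, we must fix which of the two given formulas for $I$ is primary and check that they agree. We must also confirm that right multiplication by $j$ commutes with $\iota^*$ rather than anticommuting with it. If a sign fails, the fix is to replace $u$ by $\bar u$ or to absorb the $-1$; either still yields an anti-linear, $\Gamma$-compatible involution covering $\iota$.
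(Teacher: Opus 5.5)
Your treatment of anti-linearity, the covering property, Clifford compatibility, and the fact that $\iota^*$ commutes with $\cdot j$ for the pulled-back spin structure matches what the paper treats as clear. The genuine gap is in the involution step, which is the only real content of the lemma.

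You read $\iota^*u=-u$ as $u\circ\iota=\bar u$. The paper means $u(\iota(x))=-u(x)$, i.e.\ pointwise multiplication by $-1\in S^1$. A model case is $\tildeX=S^1\times Y$ with $u(\theta,y)=e^{i\theta}$ and $\iota$ a half-rotation in $\theta$. Your reading cannot hold for a $u$ representing a generator. The group $H^1(\tildeX;\Q)\cong\Q$ is spanned by a class pulled back from $X$, so $\iota^*$ acts on it trivially, whereas $\bar u$ represents $-[u]$. Under your reading the two displayed formulas for $I$ also disagree: the second is obtained from the first by replacing $u(x)$ with $-\overline{u(x)}$. An honest computation with $I=\iota^*I_x$ then gives $I^2\Phi=-u(x)^2\Phi\neq\Phi$.

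The conjugation bookkeeping is also off. For $\Phi\in S_x$,
\[
I^2\Phi=\iota^*\bigl((u(\iota x)((u(x)\iota^*\Phi)\cdot j))\cdot j\bigr)=-\overline{u(\iota x)}\,u(x)\,\Phi .
\]
The factor $u(x)$ passes through $j$ twice and comes out unconjugated, while $u(\iota x)$ passes through $j$ only once. Your expression $-\overline{u(\iota x)}\,\overline{u(x)}$ conjugates both factors. Combined with the wrong reading of $u\circ\iota$, this produces your expected $+1$ only because the errors compensate.

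The explicit minus sign in $I=-I_{\iota(x)}\iota^*$ is not an independent sign. It is just $\iota^*I_x$ rewritten using $u\circ\iota=-u$. If you apply that form twice, the two minus signs cancel each other, so they cannot absorb $j^2=-1$. What actually happens is what the paper's one-line computation shows: $I^2\Phi=(-u((u\Phi)\cdot j))\cdot j=(-u^{-1}u\Phi)\cdot j^2=\Phi$. The relation $\overline{u(\iota x)}=-\overline{u(x)}$ supplies the sign that cancels $j^2=-1$, and $\overline{u(x)}\,u(x)=1$.

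Your fallback would not rescue the argument either. Replacing $u$ by $\bar u$ or absorbing a sign does not help, since under your reading the defect $-u^2$ is not a constant. In any case, changing the definition of $I$ would not prove the lemma for the $I$ actually given.
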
 \begin{proof}
    That $I$ is complex anti-linear and compatible with the Clifford multiplication is clear. Since we have taken the $\spin$ structure which is pulled back from $X$, multiplication by $j$ commutes with $\iota^*$. Using $(u\Phi)\cdot j = u^{-1}(\Phi\cdot j)$, we have \begin{equation*}
        I^2(\Phi) = (-u((u\Phi)\cdot j))\cdot j = (-u^{-1}u\Phi)\cdot j^2 = \Phi.
    \end{equation*}
\end{proof}

The real $\spinc$ structure on $\tildeX$ is unique up to isomorphism and sign \cite[Lemma~7.1]{miyazawa-park-taniguchi}. \begin{remark}
    The $\spin$ structure, on the other hand, is not unique. There are two $\spin$ structures on $\tildeX$ inducing $\fs_{\tildeX}$. We took the pullback $\spin$ structure. We could have instead taken the $\spin$ structure which is not pulled back from $X$. With respect to this other $\spin$ structure, the involution $\iota$ is of odd type, meaning it admits an order 4 lift to the principal $\spin$ bundle \cite{atiyah-bott}. Combining the map induced on spinors by the $\spin$ lift with the action of $j$ coming from the odd-type $\spin$ structure would also give a real involution $I$ \cite[\textsection~2]{konno-miyazawa-taniguchi}.
\end{remark}

\begin{remark}\label{rmk:non-unique-spin-structure}
    The involution $I$ acts on $\spinc$ connections by pullback. If $A_0$ is the $\spin$ connection for the even-type $\spin$ structure, then \begin{equation*}
            I \circ \nabla_{A_0} \circ I = \nabla_{A_0+u^{-1}du}. 
    \end{equation*}
    The connection $\tilde A_0 = A_0 + \frac 12 u^{-1}du$ is fixed under pullback along $I$, i.e., it is a \emph{real} connection. In fact, we can arrange that $\tilde A_0$ is the $\spin$ connection for the odd-type $\spin$ structure. In an imprecise sense, the two $\spin$ structures inducing $\fs$ differ by the multi-valued gauge transformations $u^{\pm 1/2}$.
\end{remark}

Consider a 2-knot $\cS:S^2\hookrightarrow S^4$. Let $X(\cS)$ be the 4-manifold obtained by surgery on $S^4$ along $\cS$. Then, $X(\cS)$ has the homology of $S^1\times, S^3$ and by the preceding discussion we get a real $\spinc$ structure on the double cover $\tildeX(\cS)\to X(\cS)$. There is an alternate way to arrive at the same real $\spinc$ structure, as follows. There is a unique real $\spin$ structure on the double branched cover $\Sigma_2(\cS)$, which induces a real $\spinc$ structure. Surgery on the double branched cover $\Sigma_2(\cS)$ along the branching locus produces $\tildeX(\cS)$, \begin{equation*}
    \tildeX(\cS) = \left(\Sigma_2(\cS)\setminus \nu(\tilde\cS) \right) \cup_{S^1\times S^2} S^1\times B^3.
\end{equation*}
The deck transformation on $\tildeX(\cS)$ is the union of involutions on $\Sigma_2(\cS)\setminus \nu(\tilde\cS)$ and $S^1\times B^3$. If we restrict the aforementioned real $\spinc$ structure on $\Sigma_2(\cS)$ to the complement $\Sigma_2(\cS)\setminus \nu(\tilde\cS)$, there is a unique way to extend it to a real $\spinc$ structure on $\tildeX(\cS)$.

\subsection{Monopoles and Real Structures}\label{subsection:monopoles-and-real-structures} {

Let $X$ be a smooth, closed, oriented 4-manifold and $\fs=(S=S^+\oplus S^-,\Gamma)$ a $\spinc$ structure on $X$. Suppose that $X$ is equipped with a Riemannian metric $g$ and let $\cC(X;\fs,g)$ be the space (completed with respect to some $L_2^k$ Sobolev norm) of pairs $(A,\Phi)$ where $A$ is a unitary $\spinc$ connection on $S$ and $\Phi$ is a smooth section of $S^+$. Let $\eta\in\Omega_2^+(X;i\R)$. The \emph{Seiberg--Witten equations} are the following pair of first-order partial differential equations \begin{equation}\label{eq:4d-seiberg-witten-equations}
    \begin{dcases}
        \frac 12 F_{A^t}^+ - \Gamma^{-1}((\Phi\Phi^*)_0) = \eta \\
        D_A^+\Phi = 0.
    \end{dcases}
\end{equation}

Let $\cN(X; \fs,g,\eta)\subseteq \cC(X;g,\fs)$ be the space of solutions to \eqref{eq:4d-seiberg-witten-equations}. The pair $(\cC,\cN)$ is acted on by the gauge group $\cG(X) = L_{k+1}^2(X,S^1)$. The \emph{configuration space} is the quotient \begin{equation*}
    \cB(X;\fs,g) = \cC / \cG
\end{equation*} 
and the \emph{Seiberg--Witten moduli space} is the quotient \begin{equation*}
    \cM(X;\fs,g,\eta) = \cN / \cG \subseteq \cB.
\end{equation*}
A pair $(A,\Phi)\in \cC$ is called \emph{reducible} if $\Phi=0$ and \emph{irreducible} otherwise. We write $\cC^*,\cB^*,\cM^*$ for the irreducible spaces. \begin{remark}
    We will typically omit parameters such as $\fs,g,\eta$, etc. from our notation when they are understood.
\end{remark}

Suppose $\iota:X\to X$ is a smooth involution such that $\iota^*g=g$, and $(\fs,I)$ is a real $\spinc$ structure on $(X,\iota)$. Then, the involution $I$ acts on the space of pairs $\cC,\cG,\cB$. Suppose the perturbation $\eta$ satisfies $\iota^*\eta=-\eta$. Then, $I$ acts on $\cN,\cM$ as well. The \emph{real configuration space} and the \emph{real moduli space} are, respectively, the quotients \begin{equation*}
    \cB_\Re(X;\fs,g,I) = \cC^I / \cG^I
\end{equation*}
and \begin{equation*}
    \cM_\Re(X;\fs,I,g,\eta) = \cN^I / \cG^I.
\end{equation*}
The real moduli space embeds as a subspace of the fixed point set of $I$ acting on the ordinary moduli space, \begin{equation*}
    \cM_\Re \hookrightarrow \cM^I.
\end{equation*}
The constant gauge transformations form a subgroup $S^1\subseteq \cG$ such that the intersection with $\cG^I$ is $\Z/2=\{\pm1\}$. Suppose we are given a splitting $\cG^I = \Z/2 \times \cG_0^I$. The \emph{framed} real moduli space is \begin{equation*}
    \cM_\Re^f = \cN^I / \cG_0^I.
\end{equation*}
The quotient map $\cM_\Re^f \to \cM_\Re$ is a bijection on reducibles and 2-to-1 on irreducibles.

Suppose $(\tildeX,\iota)$ is the double cover of a homology $S^1\times S^3$ and $(\fs_{\tildeX},I)$ is the real $\spinc$ structure described in Subsection \ref{subsection:real-spin-structures}. Then, $b_i^{-\iota^*}=0$ for $i\in\{1,2,3\}$, so we may consider the invariant $|\deg(\tildeX, \iota, \fs_{\tildeX}, I)|$ defined in \cite{miyazawa}. For a generic choice of $\iota$-anti-invariant perturbation $\eta$, the real framed moduli space $\cM_\Re^f(\tildeX)$ is a smooth 0-dimensional manifold oriented up to a global ambiguity of sign, and \begin{equation*}
    |\deg(\tildeX)| = |\#\cM_\Re^f(\tildeX)|.
\end{equation*}
When $\tildeX = \tildeX(\cS)$ is the double-cover of surgery on a 2-knot in $S^4$, then $|\deg(\tildeX)| = |\deg(\cS)|$, where $|\deg(\cS)|$ is the invariant for the double branched cover of $\cS$ with its $\spin$ structure; cf. \cite[Proposition~7.3]{miyazawa-park-taniguchi}.

\begin{remark}\label{rmk:pin2-structures}
    Monopole invariants coming from real $\spinc$ structures on double covers were studied in \cite{n-nakamura} from the point of view of \emph{$spin^{c-}$ structures}. We may adopt this point of view in the following way. The left quotient $I \setminus S_{\tildeX}$ is an $\R^4\oplus \R^4$-bundle over $X$ with structure group $\mathrm{Spin}^{c-}(4) = \Spin(4)\times_{\Z/2}\Pin(2)$. Real $\spinc$ objects associated to $S_{\tildeX} , I$ (e.g., real spinors, real $\spinc$ connections, real gauge transformations, Clifford multiplciation by $\iota^*$-anti-invariant forms)  descend to $\spin^{c-}$ objects associated to the quotient. We write $I\setminus \fs_{\tildeX}$ for the $\spin^{c-}$ structure over $X$. There is a canonical homeomorphism between the real moduli space $\cM_\Re(\tildeX,\iota,\fs_{\tildeX},I)$ and the $\spin^{c-}$ moduli space $\cM(X,I\setminus \fs_{\tildeX})$ which restricts to a diffeomorphism of the irreducible spaces.
\end{remark}

We will sometimes need to work with the fixed-point set $\cM(\tildeX)^I$ when we really want to work with the real moduli space $\cM_\Re(\tildeX)$. Recall that the involution $I$ was defined in Subsection \ref{subsection:real-spin-structures} by combining the action of $j$ with the gauge transformation $u$. We record the following lemmas for future reference. \begin{lemma}\label{lem:purely-imaginary-implies-zero}
    Suppose that a spinor $\Phi$ satisfies $I(\Phi)= u\Phi$. Then, $\Phi \equiv 0$.
\end{lemma}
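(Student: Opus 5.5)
The approach is a direct pointwise computation. Applying the hypothesis $I(\Phi)=u\Phi$ at a point $x$ and again at $\iota(x)$, and composing the two relations, should force $\Phi_x=-\Phi_x$ because $j^2=-1$.

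\textbf{Step 1: the condition at one point.} Use the canonical identification $\iota^*:S_{\iota(x)}\to S_x$ to regard $\Phi_{\iota(x)}$ as an element of $S_x$. By definition, $I(\Phi)_{\iota(x)}=\iota^*I_x\Phi_x$, which under this identification is $(u(x)\Phi_x)\cdot j$. Since $\iota^*u=-u$ as maps to $S^1=\R/\Z$, multiplicatively $u(\iota(x))=u(x)^{-1}$. So the right-hand side $u\Phi$ evaluated at $\iota(x)$ is $u(x)^{-1}\Phi_{\iota(x)}$. The hypothesis therefore reads, for every $x$,
\begin{equation*}
    u(x)^{-1}\,\Phi_{\iota(x)} = (u(x)\Phi_x)\cdot j .
\end{equation*}

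\textbf{Step 2: the condition at the image point.} Replace $x$ by $\iota(x)$. This uses $\iota^2=\mathrm{id}$, $u(\iota(x))^{-1}=u(x)$, and the fact (already used in the proof that $I^2=1$) that right multiplication by $j$ commutes with $\iota^*$ for the pulled-back $\spin$ structure. The result is
\begin{equation*}
    u(x)\,\Phi_x = (u(x)^{-1}\Phi_{\iota(x)})\cdot j .
\end{equation*}

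\textbf{Step 3: combine.} Substitute the expression for $u(x)^{-1}\Phi_{\iota(x)}$ from Step 1 into Step 2. The complex scalar $u(x)$ can be moved past $j$ only via $(u\Psi)\cdot j=u^{-1}(\Psi\cdot j)$, but here the scalar sits inside both applications of $j$, so we get
\begin{equation*}
    u(x)\Phi_x = ((u(x)\Phi_x)\cdot j)\cdot j = -\,u(x)\Phi_x .
\end{equation*}
Since $u(x)\neq 0$, this gives $\Phi_x=0$ for every $x$, so $\Phi\equiv 0$.

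The only real obstacle is sign and convention bookkeeping. Specifically, one must check that the identification $\iota^*$ and the formula $I(\Phi)=\iota^*I_x\Phi=-I_{\iota(x)}(\iota^*\Phi)$ are applied consistently when passing from $x$ to $\iota(x)$. This is the same computation as in the proof that $I^2=1$, but with the extra factor of $u$ on the right-hand side. Without that factor, $I^2=1$ is consistent with nonzero fixed points; with it, the $u$ and $u^{-1}$ factors no longer cancel, and one is left with a stray $j^2=-1$. I would verify the signs by running the identical calculation with $u\Phi$ replaced by $\Phi$, confirming that it recovers the tautology $I^2=1$, and then inserting the $u$.
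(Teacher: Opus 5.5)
Your argument is the same as the paper's: write the hypothesis at $x$ and at $\iota(x)$, substitute one relation into the other, and use $j^2=-1$ to get $u(x)\Phi_x=-u(x)\Phi_x$. One correction: $\iota^*u=-u$ means $u(\iota(x))=-u(x)$ pointwise, not $u(x)^{-1}$. Your reading would force $[u]=-[u]$ in $H^1(\tildeX;\Z)$, where $\iota^*$ acts trivially, and it would break $I^2=1$, so your proposed sanity check would fail. The slip does not affect the proof, because $u(\iota(x))$ only ever enters through the block $u(\iota(x))\Phi_{\iota(x)}$, which the paper leaves unsimplified.
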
\begin{proof}
    Such a spinor satisfies\begin{equation}\label{eq:I-equals-u}
        \begin{dcases}
            u(x)\Phi_x = (u(\iota(x))\Phi_{\iota(x)})\cdot j \\
            u(\iota(x))\Phi_{\iota(x)} = (u(x)\Phi_x)\cdot j.
        \end{dcases}
    \end{equation}
    From \eqref{eq:I-equals-u}, we deduce \begin{equation*}
        u(x)\Phi_x = ((u(x)\Phi_x)\cdot j)\cdot j = -u(x)\Phi_x.
    \end{equation*}
    Therefore, $\Phi\equiv 0$.
\end{proof}\begin{lemma}\label{lem:irreducible-moduli-spaces}
    The embedding $\cM_\Re(\tildeX)\hookrightarrow \cM(\tildeX)^I$ restricts to a bijection $\cM_\Re^*(\tildeX)\hookrightarrow \cM^*(\tildeX)^I$.
\end{lemma}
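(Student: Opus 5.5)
Injectivity is immediate from the fact that $\cM_\Re\hookrightarrow\cM^I$ is already an embedding, so the plan is to prove surjectivity onto $\cM^*(\tildeX)^I$. Let $[(A,\Phi)]\in\cM^*(\tildeX)^I$ with $\Phi\not\equiv 0$. The condition $I[(A,\Phi)]=[(A,\Phi)]$ means there is a gauge transformation $g\in\cG$ with
\begin{equation*}
I(A,\Phi)=g\cdot(A,\Phi).
\end{equation*}
I will look for a gauge transformation $h$ such that $h\cdot(A,\Phi)$ is fixed by $I$. Such an $h$ shows that the class lies in the image of $\cN^I/\cG^I$.

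The first step is to constrain $g$. Applying $I$ twice and using $I^2=1$ gives
\begin{equation*}
(A,\Phi)=I(g\cdot(A,\Phi))=I(g)\,g\cdot(A,\Phi),
\end{equation*}
where $I(g)$ denotes the induced action on gauge transformations. This action is $g\mapsto \overline{\iota^*g}$, because $I$ is antilinear and covers $\iota$. Since $(A,\Phi)$ is irreducible, its stabilizer is trivial, so $\overline{\iota^*g}\,g=1$. Equivalently, $\iota^*g=g$, i.e.\ $g$ is $\iota$-invariant and descends to a map $X\to S^1$.

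The second step is to absorb $g$ by a square root. Note that $I(h\cdot(A,\Phi))=I(h)\,g\cdot(A,\Phi)=\overline{\iota^*h}\,g\cdot(A,\Phi)$. To make this equal $h\cdot(A,\Phi)$, I need $h$ with $\overline{\iota^*h}\,g=h$, equivalently $g=h\,\iota^*h$.
\begin{itemize}
\item If $g$ descends to $X$ with a square root $g=k^2$, where $k$ is $\iota$-invariant, then $h=k$ works, since $h\,\iota^*h=k^2=g$.
\item The obstruction to taking a square root on $X$ is the class of $g$ in $H^1(X;\Z)\cong\Z$ modulo $2$.
\end{itemize}
To handle the odd case, I write $g=v\,e^{f}$, where $v$ represents a generator of $H^1(X;\Z)$. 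Pulled back to $\tildeX$, $v$ becomes $u^{2}$ up to nullhomotopic factors, since $\tildeX\to X$ has degree $2$ on $H^1$. But the pullback class is even, so $g$ pulled back to $\tildeX$ is always of even degree. This does not by itself give an invariant square root, so the parity issue must be settled on $X$.

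This parity issue is the main obstacle, and I expect Lemma~\ref{lem:purely-imaginary-implies-zero} to resolve it. Suppose $g$ has odd class on $X$. Adjust $(A,\Phi)$ by the invariant square root of $g u^{-2}$, modulo the even part. After this adjustment the residual relation becomes $I(\Phi')=u^{\pm1}\Phi'$ (up to conjugating the sign of $u$ using $\iota^*u=-u$), with $A'$ correspondingly related. Lemma~\ref{lem:purely-imaginary-implies-zero} then forces $\Phi'\equiv0$, which contradicts irreducibility. Hence the odd case cannot occur, the class of $g$ is even, and the square root exists. Concretely, I would check the identity $I(k\cdot x)=\overline{\iota^* k}\, I(x)$ carefully to verify that the odd residual really is multiplication by $u$. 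Once that is confirmed, the even case gives $h$ with $h\cdot(A,\Phi)\in\cN^I$, which establishes surjectivity and hence the bijection.
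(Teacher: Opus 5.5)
Your first step is correct: irreducibility and $I^2=1$ force $\overline{\iota^*g}\,g=1$, i.e.\ $\iota^*g=g$. You also correctly reduce surjectivity to solving $g=h\,\iota^*h$. The gap is in the second step, where you look only for $\iota$-invariant $h$. That restriction is what creates the ``parity obstruction'' in $H^1(X;\Z)$. Your way of disposing of the odd case is wrong, because the odd case genuinely occurs. For any $I$-fixed irreducible $(A_0,\Phi_0)$, the gauge-equivalent configuration $(A,\Phi)=(iu)^{-1}\cdot(A_0,\Phi_0)$ satisfies $I(A,\Phi)=\iota^*(iu)\cdot(iu)\cdot(A,\Phi)=u^2\cdot(A,\Phi)$. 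Here $u^2$ is $\iota$-invariant and descends to a generator of $H^1(X;\Z)$, so its class is odd.

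The faulty step is the claimed reduction to $I(\Phi')=u^{\pm1}\Phi'$. Adjusting by the invariant square root of $gu^{-2}$ leaves the residual $u^2$, not $u^{\pm1}$. In fact no gauge adjustment can produce $u^{\pm1}$. Replacing $(A,\Phi)$ by $h\cdot(A,\Phi)$ replaces $g$ by $\overline{\iota^*h}\,g\,h^{-1}$, which changes $[g]\in H^1(\tildeX;\Z)$ by $-2[h]$; by your own first step the new $g$ also remains $\iota$-invariant. Since $[g]$ starts out even, it stays even. So Lemma~\ref{lem:purely-imaginary-implies-zero} has nothing to act on, and the argument as written derives a contradiction from a situation that actually arises.

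The missing idea is that $h$ need not be $\iota$-invariant. Since $\iota^*u=-u$, one has $(i^m u^m)\,\iota^*(i^m u^m)=u^{2m}$. Since $u^2$ descends to a generator of $H^1(X;\Z)$, every $\iota$-invariant $g$ can be written as $g=u^{2m}e^{\psi}$ with $\psi$ an $\iota$-invariant imaginary-valued function. Then $h=i^m u^m e^{\psi/2}$ solves $g=h\,\iota^*h$ with no parity condition at all.

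This is essentially the paper's argument. It normalizes $g$ to $u^d$ and uses $u^{d/2}$ (up to the constant $i^{d/2}$). That is an odd, hence non-invariant, power of $u$ exactly in your ``odd'' case $d\equiv 2\pmod 4$. The parity that matters is that of $d=[g]\in H^1(\tildeX;\Z)$, not the class on $X$. The paper uses Lemma~\ref{lem:purely-imaginary-implies-zero} only to exclude odd $d$, which your first step already excludes directly.
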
\begin{proof}
    The fixed-point set $\cM^*(\tildeX)^I$ consists of gauge-equivalence classes $[A,\Phi]$ such that $(A,\Phi),I(A,\Phi)$ are gauge-equivalent. Consider such a configuration. Up to a gauge transformation, we may assume $I(A,\Phi)=u^d\cdot(A,\Phi)$ for some $d\in\Z$. If $d$ is even, then $(A',\Phi')=u^{d/2}(A,\Phi)$ is gauge-equivalent to $(A,\Phi)$ and is fixed by $I$, so that $[A,\Phi]$ belongs to the real moduli space. If $d$ is odd, then $(A',\Phi') = u^{(d-1)/2}(A,\Phi)$ is gauge-equivalent to $(A,\Phi)$ and satisfies $I(A',\Phi')=u(A',\Phi')$, in which case Lemma \ref{lem:purely-imaginary-implies-zero} implies that $(A,\Phi)$ is reducible.
\end{proof}


}


\section{Monopole Floer Homology with Pin(2)-Equivariant Perturbations}\label{section:floer-homology}

\noindent In this section we define monopole Floer homology with $\Z$ coefficients using $\Pin(2)$-equivariant perturbations by adding appropriate orientations to the characteristic 2 construction of \cite{f-lin}. This will allow us to work with a chain-level lift of the complex conjugation action $\jmath$ on monopole Floer homology. Throughout, $Y$ will be a 3-manifold and $\fs_Y$ a $\spinc$ structure on $Y$. Additional hypotheses will be added as appropriate.

\subsection{Perturbations and Moduli Spaces}\label{subsection:perturbations-and-moduli-spaces}{

Let $Y$ be an oriented 3-manifold and $\fs_Y$ a $\spinc$ structure on $Y$. The construction of monopole Floer homology \cite{kronheimer-mrowka} requires a choice of Riemannian metric $g$ and a perturbation $\fq$ of the Chern-Simons-Dirac functional $\cL$. The construction in \cite{f-lin} considers perturbations such that the perturbed gradient $\nabla\pounds$ has Morse-Bott singularities \cite[ch.~2, Definition~1.2]{f-lin}. This allows for more general perturbations than those considered in \cite{kronheimer-mrowka}, for which the critical points are required to be genuinely non-degenerate.

Suppose that a perturbation has been chosen such that all singularities are Morse-Bott. We will refer to connected components of the set of critical points as \emph{critical submanifolds}. Let $[\fC^-],[\fC^+]$ be two critical submanifolds. The moduli space  $\cM([\fC^-],[\fC^+])$
is the space of trajectories of $\nabla\pounds$ limiting at the ends to points in $[\fC^\pm]$. The moduli space $\cM([\fC^-],[\fC^+])$ comes with an action of $\R$ by time-translation and we write $\ucM([\fC^-],[\fC^+])$ for the quotient, the moduli space of unparameterized trajectories. The moduli space $\cM([\fC^-],[\fC^+])$ is defined as a subspace of an appropriate completion of the blown-up configuration space of a cylinder $\R\times Y$. It may be considered as a subspace of either the Sobolev-completed configuration space $\cB_{k}^{\tau}(\R\times Y)$ or the weighted-Sobolev-completed configuration space $\cB_{k,\delta}^{\tau}(\R\times Y)$; see \cite[pp. 43-44]{f-lin} for the appropriate notion of weighted Sobolev completion. The weighted spaces are introduced to deal with Fredholm issues at the ends.

The appropriate notion of regularity of the moduli space is \cite[Definition~3.14]{f-lin}. Regularity requires transversality of the evaluation maps \begin{equation*}
    \ev^\pm : \ucM([\fC^-],[\fC^+]) \longrightarrow [\fC^\pm]
\end{equation*} 
sending a trajectory to its endpoints, and, inductively, transversality such that the iterated fiber products \begin{equation*}\begin{split}
    &\ucM([\fC^-]=[\fC_1],[\fC_2],\cdots,[\fC_n]=[\fC^+]) = \\ &\lim\left(\begin{tikzcd}[ampersand replacement=\&,sep=small]
    \ucM([\fC_1],[\fC_2]) \ar[dr] \& \& \ar[dl] \ucM([\fC_2],[\fC_3]) \ar[dr] \& \cdots \& \ar[dl] \ucM([\fC_{n-1}],[\fC_n]) \\
    \& {[\fC_2]} \& \& \cdots
\end{tikzcd}\right),
\end{split}
\end{equation*}
the moduli spaces of broken trajectories, inherit the structure of smooth manifolds. The union of moduli spaces of broken trajectories gives a compactification of $\ucM([\fC^-],[\fC^+])$, \begin{equation*}
    \ucM^+([\fC^-],[\fC^+]) = \coprod \ucM([\fC^-]=[\fC_1],[\fC_2],\cdots,[\fC_n]=[\fC^+]).
\end{equation*}

We also consider moduli spaces of Seiberg--Witten solutions on cobordisms. Let $W$ be an oriented compact $\spinc$ 4-manifold with boundary $\partial W = \coprod_{\alpha} Y^\alpha$ and let $W_\infty$ be the open 4-manifold obtained from $W$ by attaching cylindrical ends. Suppose that a perturbation $\fp$ is chosen over $W$. We write $\cM(W_\infty; ([\fC^\alpha]))$ for the moduli space of Seiberg--Witten solutions limiting to points in $[\fC^\alpha]$ over each end $Y^\alpha$ and $\cM^+(W_\infty; ([\fC^\alpha]))$ for the compactified moduli space. Sometimes, we will view some of the boundary components of $W$ as ``incoming''. Suppose the oriented boundary of $W$ is $\partial W = (-\coprod_\beta Y^{-,\beta}) \sqcup (\coprod_\alpha Y^{+,\alpha})$. Then, we write $\cM(([\fC^\beta]), W_\infty, ([\fC^\alpha]))$ for the moduli space of solutions that limit to points in $[\fC^\beta]$ over $Y^{-,\beta}$ and to points in $[\fC^\alpha]$ over $Y^{+,\alpha}$, and $\cM^+(([\fC^\beta]), W_\infty, ([\fC^\alpha]))$ for the compactified moduli space.

When a perturbation is chosen (over a 3-manifold $Y$ or over a cobordism $W$) such that all moduli spaces involved are regular, we will say that the perturbation is regular. For a regular perturbation, the compactified moduli spaces are stratified by manifolds and have codimension-$c$ $\delta$-structures \cite[24.7.1]{kronheimer-mrowka} along each face of the codimension-1 stratum; see \cite[ch.~2, \textsection\textsection~5-6]{f-lin}. Here, $c$ is the order to which the corresponding face is boundary-obstructed.

Suppose that $\fs_Y$ is induced by a $\spin$ structure. Then, the quaternion $j\in\Pin(2)$ acts on the spinor bundle, inducing an involution $\jmath$ on the configuration space. We consider perturbations which are equivariant with respect to the pushforward along $\jmath$, and hence with respect to the action of $\Pin(2)$. It is generally not possible to find $\Pin(2)$-equivariant perturbations which are genuienly non-degenerate. Instead, the appropriate notion is $\Pin(2)$-non-degeneracy \cite[ch.~4, Definition~2.3]{f-lin}. For a $\Pin(2)$-non-degenerate perturbation, all critical points are non-degenerate in the usual sense, except for reducible critical points lying over a $\spin$ connection, which come in Morse-Bott $\CP^1$ families. Regularity of $\Pin(2)$-equivariant perturbations is dealt with in \cite[ch.~4, Theorem~2.6]{f-lin} and \cite[ch.~4, Proposition~2.12]{f-lin}.

}

\subsection{Orienting Abstract $\delta$-chains}\label{subsection:orienting-abstract-delta-chains} The Morse-Bott monopole Floer complex is built from chains modeled on moduli spaces of Seiberg--Witten solutions. In order to define homology, we need to understand the combinatorial properties of these chains and how orientations propagate down the hierarchy of faces. \begin{definition} (cf. \cite[ch.~3, Definition~1.1]{f-lin})
    An \emph{abstract $\delta$-chain} is a stratified space \begin{equation*}
        \Delta = \left(N^d\supseteq N^{d-1} \supseteq \cdots \supseteq N^{1}\supseteq N^0 \supseteq \emptyset\right)
    \end{equation*}
    along with the following data. Each stratum has a partition \begin{equation*}
        N^e = \coprod_{a} M^e_{a}
    \end{equation*} 
    and the partitions respect the hierarchy of inclusions. Closures of partition elements are called \emph{faces}. If $\Sigma=\overline{\Delta^e_a}$ is a face, then $\Sigma^\circ = \Delta^e_a$ denotes its top-dimensional stratum.

    Whenever a codimension-$e$ face is contained in a codimension-$(e-2)$ face, there are exactly two codimension-$(e-1)$ faces between them (in particular, every codimension-2 face is contained in exactly two codimension-1 faces).

    Associated to every inclusion of faces $\Sigma_{a}\subseteq \Sigma$, there are \begin{enumerate}
        \item a pair of finite sets $O_{\Sigma,\Sigma_{a}}\subseteq N_{\Sigma,\Sigma_{a}}$ called the \emph{obstruction} and \emph{parameter} sets, respectively,
        \item an open neighborhood $\breve{W}_{\Sigma,\Sigma_{a}}\supseteq \Sigma_{a}$ in $\Sigma$ and a topological embedding $\breve W \hookrightarrow E\breve W_{\Sigma,\Sigma_{a}}$ into a larger space (the \emph{local thickening}), and
        \item continuous maps $\bfS=\bfS_{\Sigma,\Sigma_{a}}:\EW\to(0,\infty]^{N_{\Sigma,\Sigma_{a}}}$ and $\delta=\delta_{\Sigma,\Sigma_{a}}:\EW\to\R^{O_{\Sigma,\Sigma_{a}}}$
    \end{enumerate}
    satisfying the following properties. \begin{enumerate}
        \item The subspaces ${\Sigma_{a}}^\circ,\W\subseteq\EW$ are identified as fibers of $\bfS,\delta$, respectively. Namely, ${\Sigma_{a}}^\circ=\bfS^{-1}(\vinfty)$ and $\W = \delta^{-1}(0)$.
        \item The map $\bfS$ is a topological submersion along $\W$ (\cite[Definition~19.2.7]{kronheimer-mrowka}).
        \item Where all components of $\bfS$ are finite, the embedding $\W\hookrightarrow\EW$ restricts to a smooth embedding of manifolds and $\delta$ is smooth and transverse to $0$.
    \end{enumerate}
    The data above is required to be compatible with respect to the hierarchy of inclusions, i.e., a chain of inclusions of faces $\Sigma_{ab}\subseteq \Sigma_{a}\subseteq \Sigma$ induces inclusions $N_{\Sigma_{a},\Sigma_{ab}}\subseteq N_{\Sigma,\Sigma_{ab}}$ and $O_{\Sigma_{a},\Sigma_{ab}} \subseteq O_{\Sigma,\Sigma_{ab}}\cap N_{\Sigma_{a},\Sigma_{ab}}$, we have identifications \begin{equation*}
        \begin{split}
            (0,\infty]^{N_{\Sigma_{a},\Sigma_{ab}}} &= \left\{(x_a)\in (0,\infty]^{N_{\Sigma,\Sigma_{ab}}}\,:\,x_a=\infty\text{ unless }a\in N_{\Sigma_{a},\Sigma_{ab}}\right\} \\
            \EW_{\Sigma_{a},\Sigma_{ab}} &= \bfS^{-1}_{\Sigma,\Sigma_{ab}}\left((0,\infty]^{N_{\Sigma_{a},\Sigma_{ab}}}\right)\\
            \bfS_{\Sigma_{a},\Sigma_{ab}} &= \bfS_{\Sigma,\Sigma_{ab}}|_{\EW_{\Sigma_{a},\Sigma_{ab}}}\\
            \delta_{\Sigma_{a},\Sigma_{ab}} &= \delta_{\Sigma,\Sigma_{ab}}|_{\EW_{\Sigma_{a},\Sigma_{ab}}}.
        \end{split}
    \end{equation*}
    The identification of the local thickenings need only be defined on a neighborhood of the fiber over $\vinfty$. Whenever $\mathrm{codim}_{\Sigma} \Sigma_{a}=1$, there is an identification of $\R^{O_{\Sigma,\Sigma_a}}$ with the hyperplane $\Pi^c=\vec 1^\perp\subseteq \R^{c+1}$ and the local thickening and the pair of maps $\bfS, \delta$ give $\Sigma$ a codimension-$c$ $\delta$ structure along $\Sigma_{a}$ (\cite[Definition~24.7.1]{kronheimer-mrowka}), where $c=c_{\Sigma,\Sigma_a}:=|O_{\Sigma,\Sigma_a}|\equiv |N_{\Sigma,\Sigma_a}|-1$. 
\end{definition}

In order to define a $\Z$-chain complex based on $\delta$-chains, we need to be able to orient our $\delta$-chains. The combinatorial structure of $\delta$-chains in general is too weak to make sense of orientations in a coherent way, and so we restrict attention to a subclass. The combinatorial structure of these chains is more strictly modeled on that of the Seiberg--Witten moduli spaces we will encounter, following \cite[Section 24.6]{kronheimer-mrowka}.\begin{definition}\label{def:delta-chain-properties}
    An abstract $\delta$-chain $\Delta$ is \emph{orientable} if $\Delta^\circ$ is orientable as a manifold and if for every inclusion of faces $\Sigma_a\subseteq\Sigma$ with $\operatorname{codim}_\Sigma \Sigma_a = 1$, there are preferred orientations on $\R^{O_{\Sigma,\Sigma_a}}$ and $(0,\infty]^{N_{\Sigma,\Sigma_{a}}}$. An \emph{orientation} of $\Delta$ is an orientation of $\Delta^\circ$. If $\Delta$ is orientable and is given an orientation then $\Delta$ is \emph{oriented}.
\end{definition}

\begin{example}[Moduli Spaces of Trajectories as Abstract $\delta$-chains]\label{ex:cylinder}
    Consider a moduli space of unparameterized Seiberg--Witten trajectories \begin{equation*}
        \Delta=\breve\cM^+([\fC^{-}],[\fC^{+}]).
    \end{equation*}
    If $\Delta$ contains irreducibles, then there is a classification of the codimension-1 faces analogous to \cite[Proposition~16.5.5]{kronheimer-mrowka}. There are\begin{enumerate}[label=(\roman*)] 
        \item moduli spaces of boundary-unobstructed broken trajectories, \begin{equation*}
            \Sigma = \ucM^+([\fC^{-}], [\fA], [\fC^{+}]),
        \end{equation*}
        for which $N_{\Delta,\Sigma} = \{[\fA]\}$ and $O_{\Delta,\Sigma} = \emptyset$.
        \item moduli spaces of broken trajectories with a boundary-obstructed component, \begin{equation*}
            \Sigma = \ucM^+([\fC^{-}], [\fA],  [\fB], [\fC^{+}]),
        \end{equation*}
        for which $N_{\Delta, \Sigma} = \{[\fA],[\fB]\}$ and $O_{\Delta,\Sigma} = \{[\fB]\}$.
        \item in the case that the moduli space contains both irreducibles and reducibles, the reducible part \begin{equation*}
            \Sigma = \ucM^{red+}([\fC^{-}],[\fC^{+}]),
        \end{equation*}
        for which $N_{\Delta,\Sigma} = \{[\fC^{+}]\}$ and $O_{\Delta,\Sigma} = \emptyset$.
    \end{enumerate}
    In the first and third cases above, $\Delta$ has the structure of a manifold with boundary along $\Sigma$. The local thickening $\EW$ is just a neighborhood $\W$ and $\bfS$ parameterizes $\W$ as a collar $\Sigma\times (0,\infty]$. In the second case, we have a codimension-1 $\delta$-structure. The local thickening $\EW$ consists of trajectories for which the size of the spinor component is allowed to have a discontinuity at zero. The map $\delta:\EW\to \R$ measures this discontinuity, while $\bfS$ records breaking of trajectories. See \cite[ch.~19]{kronheimer-mrowka}.
    
    In the case that $\Delta$ consists entirely of reducibles, the codimension 1 faces are all of the form \begin{equation*}
        \Sigma = \ucM^{red+}([\fC_0]=[\fC^{-}], [\fC_1], [\fC_2]=[\fC^{+}]),
    \end{equation*}
    for which $N_{\Delta,\Sigma} = \{[\fC_1]\}$ and $O_{\Delta,\Sigma}=\emptyset$.
    
    For a general inclusion of faces $\Sigma_a\subseteq\Sigma$, the set $N_{\Sigma,\Sigma_a}$ records breaking of solutions, and $O_{\Sigma,\Sigma_a}$ records boundary-obstructed breaking. The set $N_{\Sigma,\Sigma_a}$ is ordered in the obvious way. The ordering on $N_{\Delta,\Sigma}$ gives preferred orientations on $(0,\infty]^{N_{\Sigma,\Sigma_{a}}}$ and $\R^{O_{\Sigma,\Sigma_{a}}}$ for every codimension-1 inclusion of faces $\Sigma_a\subseteq\Sigma$.
\end{example}
\begin{remark}
    We have modified the definition of an abstract $\delta$-chain slightly compared to \cite[ch.~3, Definition~1.1]{f-lin}. Specifically, for a chain of inclusions of faces $\Sigma_{ab}\subseteq \Sigma_{a}\subseteq \Sigma$, we do not require equality $O_{\Sigma_{a},\Sigma_{ab}} = O_{\Sigma,\Sigma_{ab}}\cap N_{\Sigma_{a},\Sigma_{ab}}$. This becomes relevant, for example, if we have a chain of moduli spaces of broken trajectories of the form \begin{equation*}
        \begin{split}
            &\Sigma_{ab} = \ucM^+([\fC_0],[\fC_1])\times \ucM^+([\fC_1],[\fC_2])\times \ucM^+([\fC_2],[\fC_3]) \times \ucM^+([\fC_3],[\fC_4]) \\
            &\subseteq \Sigma_{a} = \ucM^+([\fC_0],[\fC_1])\times \ucM^+([\fC_1],[\fC_3])\times \ucM^+([\fC_3],[\fC_4]) \\
            &\subseteq \Sigma = \ucM^+([\fC_0],[\fC_4])
        \end{split}
    \end{equation*}
    where $[\fC_0],[\fC_4]$ are irreducible, $[\fC_1]$ is boundary-stable, and $[\fC_2],[\fC_3]$ are boundary-unstable. In this case, $O_{\Sigma,\Sigma_{ab}} = \{[\fC_2]\} = N_{\Sigma_a,\Sigma_{ab}}$, but $O_{\Sigma_a,\Sigma_{ab}}=\emptyset$. Even though $\Sigma_{ab}$ contains broken trajectories with boundary-obstructed components, we do not record them as part of $O_{\Sigma_a,\Sigma_{ab}}$ as they occur as part of the boundary of a reducible moduli space.
\end{remark}

\begin{definition}\label{def:boundary-orientation}
    Let $\Delta$ be an oriented abstract $\delta$-chain and $\Sigma\subseteq\Delta$ a codimension-1 face. The \emph{boundary orientation} on $\Sigma$ is that orientation which differs from the orientation described above by the sign $(-1)^{\dim\Delta + c_{\Delta,\Sigma}+1}$.
\end{definition}
For the simplest example of a codimension-$c$ $\delta$-structure, take $\EW=\R^d\times (0,\infty]^{c+1}$ with the maps $\bfS(\mathbf x,\mathbf s)=\mathbf s$ and $\delta(\mathbf x,\mathbf s) = (1/ s_2 - 1 /s_2, \ldots , 1 / s_{c+1} - 1 / s_c)$. In this case, $\W = \delta^{-1}(0) = \R^d\times \mathrm{span}(1,\ldots,1)$ has the structure of a manifold with boundary along $\R^d\times \{0\}$. The boundary orientation is defined so that in this case it agrees with the orientation obtained using the outward-normal-first convention.

A codimension-2 face of an oriented abstract $\delta$-chain is contained in two codimension-1 faces, and can therefore inherit two possible boundary orientations. In order to define homology based on $\delta$-chains, we need these orientations to cancel.

If $N^d\supseteq N^{d-1}\supseteq \ldots$ is a space stratified by manifolds and each stratum is oriented, then the \v{C}ech cohomology $\check{H}^e(N^e,N^{e-1};\Z)$ is canonically isomorphic to $\Z^n$, with a basis given by the components of $N^e\setminus N^{e-1}$. There is a sequence of coboundary operators $\delta_*:\check{H}^{e-1}(N^{e-1},N^{e-2};\Z) \to \check{H}^e(N^e,N^{e-1};\Z)$ satisfying $\delta_*^2=0$. If $M^e_\alpha,M^{e-1}_\beta$ are components of $N^e,N^{e-1}$ respectively, then the \emph{boundary multiplicity} $\delta_{\alpha\beta}$ of $M^{e-1}_{\alpha}$ in $M^{e}_{\beta}$ is the coefficient of $[M^{e}]$ in $\delta_*[M^{e-1}]$, and the boundary multiplicity of $M^{e-1}_\alpha$ in $N^e$ is the sum $\delta_{\alpha;N} = \sum_\beta \delta_{\alpha\beta}$

\begin{lemma}\label{lem:canceled}
    Let $\Delta$ be an abstract $\delta$-chain. Suppose $\Delta$ is oriented and the codimension-1 faces of $\Delta$ are given their boundary orientations. Then, for every square \begin{equation*}
            \begin{tikzcd}
                & \Sigma_{a}\ar[dr] & \\
                \Sigma_{ab}\ar[ur]\ar[dr] & & \Delta \\
                & \Sigma_{b}\ar[ur] &
            \end{tikzcd}
        \end{equation*}
        of codimension-1 inclusions of faces with $\Sigma_{a}\neq \Sigma_{b}$, the boundary orientations on $\Sigma_{ab}$ induced through $\Sigma_a,\Sigma_b$ are opposite.
\end{lemma}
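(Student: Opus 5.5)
The plan is to reduce to a local model near a generic point of $\Sigma_{ab}^\circ$ and compute both induced orientations there with the sign rule of Definition~\ref{def:boundary-orientation}.

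\textbf{Local model.} Fix a point $x\in\Sigma_{ab}^\circ$. Use the local thickening $\EW_{\Delta,\Sigma_{ab}}$ together with $\bfS=\bfS_{\Delta,\Sigma_{ab}}$ and $\delta=\delta_{\Delta,\Sigma_{ab}}$. Where all components of $\bfS$ are finite, $\W=\delta^{-1}(0)$ is a smooth manifold and $\bfS$ is a topological submersion. Hence near $x$ we may identify $\EW$ with $\Sigma_{ab}^\circ\times(0,\infty]^{N}$, where $N=N_{\Delta,\Sigma_{ab}}$, so that $\Delta$ becomes $\delta^{-1}(0)$ inside this product.

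The compatibility identifications attach $\Sigma_a$ and $\Sigma_b$ to subsets $N_a=N_{\Sigma_a,\Sigma_{ab}}$ and $N_b=N_{\Sigma_b,\Sigma_{ab}}$ of $N$, with complementary parameters $N\setminus N_a$ and $N\setminus N_b$ recording the normal directions of $\Sigma_a$ and $\Sigma_b$ in $\Delta$. Since exactly two codimension-$1$ faces lie between $\Sigma_{ab}$ and $\Delta$, the pair $(N,O_{\Delta,\Sigma_{ab}})$ decomposes into the two pieces $N_{\Delta,\Sigma_a}$ and $N_{\Delta,\Sigma_b}$ "from above", and into $N_{\Sigma_a,\Sigma_{ab}}$ and $N_{\Sigma_b,\Sigma_{ab}}$ "from below".

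The obstruction sets need not be compatible with intersection, by the remark preceding the definition of the boundary orientation. So I will enumerate the combinatorial possibilities for the codimension counts:
\begin{equation*}
c_{\Delta,\Sigma_a},\quad c_{\Sigma_a,\Sigma_{ab}},\quad c_{\Delta,\Sigma_b},\quad c_{\Sigma_b,\Sigma_{ab}}.
\end{equation*}
Note that $c_{\Delta,\Sigma_a}+c_{\Sigma_a,\Sigma_{ab}}$ and $c_{\Delta,\Sigma_b}+c_{\Sigma_b,\Sigma_{ab}}$ need not be equal, and neither need equal $|O_{\Delta,\Sigma_{ab}}|$.

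\textbf{Orientation computation.} In each case, the orientation of $\Delta^\circ$ near $x$ is written as $\Sigma_{ab}^\circ$ times the preferred orientation of $(0,\infty]^N$ restricted to $\delta^{-1}(0)$. This uses the preferred orientations of $\R^{O}$ and $(0,\infty]^N$ from Definition~\ref{def:delta-chain-properties}. Passing to $\Sigma_a$ contributes the sign
\begin{equation*}
(-1)^{\dim\Delta+c_{\Delta,\Sigma_a}+1},
\end{equation*}
and then passing to $\Sigma_{ab}$ contributes
\begin{equation*}
(-1)^{\dim\Delta-1+c_{\Sigma_a,\Sigma_{ab}}+1}.
\end{equation*}
There is a further sign for reordering the two collar coordinates $1/s$ into outward-normal-first form.

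The target is to show that the $a$-route and the $b$-route differ exactly by the transposition of the two outward normals, which gives the sign $-1$. This is the standard fact for a manifold with corners, $\partial^2=0$. In the simplest model, $\W=\R^d\times\operatorname{span}(1,\dots,1)$ in each factor, which is a corner of a product of half-lines, and the claim is precisely that outward-normal-first orientations on a corner cancel.

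\textbf{Main obstacle.} The hardest step is verifying that the signs $(-1)^{c}$ from obstruction sets compensate correctly in the noncompatible cases, where $O_{\Sigma_a,\Sigma_{ab}}\subsetneq O_{\Delta,\Sigma_{ab}}\cap N_{\Sigma_a,\Sigma_{ab}}$. The first approach I would try is to reduce to the model $\delta$-structure given after Definition~\ref{def:boundary-orientation} by a homotopy of $\delta$. Since $\delta$ is transverse to $0$ and $\bfS$ is a submersion, the orientations should be determined by this combinatorial model, after which one checks case by case that
\begin{equation*}
c_{\Delta,\Sigma_a}+c_{\Sigma_a,\Sigma_{ab}}\equiv c_{\Delta,\Sigma_b}+c_{\Sigma_b,\Sigma_{ab}}\pmod 2,
\end{equation*}
where each side is computed together with its accompanying reordering sign. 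If this parity identity fails in some case, that case would need the preferred orientations on $\R^{O}$ and $(0,\infty]^N$ to be chosen compatibly, for example via the ordering of $N$ as in Example~\ref{ex:cylinder}.
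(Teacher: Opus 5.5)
Your proposal has a genuine gap. It stops where the content of the lemma lies. The claim that the two routes differ by a single transposition of outward normals is stated as a target, then deferred to a case-by-case parity check that you do not carry out and concede might fail.

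The local model in which that check would take place also cannot be built from the axioms:
\begin{itemize}
\item The orientation of $\Sigma_a^\circ$ is the boundary orientation defined through the thickening $\EW_{\Delta,\Sigma_a}$ and the preferred orientations attached to $(\Delta,\Sigma_a)$. Your coordinates near $x$ come from $\EW_{\Delta,\Sigma_{ab}}$. The compatibility conditions relate $\EW_{\Sigma_a,\Sigma_{ab}}$ to $\EW_{\Delta,\Sigma_{ab}}$, but say nothing about how $\EW_{\Delta,\Sigma_a}$ or its preferred orientations sit inside $\EW_{\Delta,\Sigma_{ab}}$.
\item There is no splitting of $N_{\Delta,\Sigma_{ab}}$ into pieces belonging to $\Sigma_a$ and $\Sigma_b$. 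In the configuration of the remark preceding Definition~\ref{def:boundary-orientation}, $N_{\Delta,\Sigma_{ab}}$ records three breakings, while $N_{\Sigma_a,\Sigma_{ab}}\cup N_{\Sigma_b,\Sigma_{ab}}$ records only two.
\item At points of $\Sigma_{ab}^\circ$ some components of $\bfS$ equal $\vinfty$, and there you only have a topological submersion. So ``outward normal first'' and ``reordering the collar coordinates'' are not defined there.
\item Nothing in the definition provides a homotopy of $\delta$ to the model structure.
\end{itemize}
Consequently the bookkeeping comparing $c_{\Delta,\Sigma_a}+c_{\Sigma_a,\Sigma_{ab}}$ with $c_{\Delta,\Sigma_b}+c_{\Sigma_b,\Sigma_{ab}}$ cannot even be set up, let alone verified.

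The missing idea is to make the comparison intrinsic and homological. This reduces the codimension-2 statement to codimension-1 statements, each used on its own.
\begin{itemize}
\item With all strata oriented, $\check H^e(N^e,N^{e-1};\Z)$ has a basis given by components. The connecting maps satisfy $\delta_*^2=0$ for purely topological reasons.
\item The boundary multiplicity of one stratum in another depends only on the orientations of the strata, not on any thickening.
\item The codimension-$c$ version of \cite[Lemma~21.3.1]{kronheimer-mrowka} identifies the boundary orientation, for a single codimension-1 inclusion, with the orientation of multiplicity $+1$.
\end{itemize}
Apply this to $\Sigma_a\subseteq\Delta$, $\Sigma_b\subseteq\Delta$, and $\Sigma_{ab}\subseteq\Sigma_a$, with $\Sigma_{ab}$ oriented through $\Sigma_a$. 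Exactly two codimension-1 faces lie between $\Sigma_{ab}$ and $\Delta$. So, componentwise, the coefficient of $[\Delta]$ in $\delta_*^2[\Sigma_{ab}]$ is $1$ plus the multiplicity of $\Sigma_{ab}$ in $\Sigma_b$. Since $\delta_*^2=0$, that multiplicity is $-1$. That is, the orientation induced through $\Sigma_a$ is opposite to the boundary orientation induced through $\Sigma_b$.

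This is how the paper argues. It never has to compare obstruction counts, preferred orientations, or thickenings across different levels, which is exactly where your approach gets stuck.
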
 \begin{proof}
    Consider such a square of inclusions. Suppose that $\Sigma_{ab}$ is given the boundary orientation induced through $\Sigma_{a}$. By \cite[Lemma~21.3.1]{kronheimer-mrowka} (slightly generalized to the codimension-$c$ case), every component of $\Sigma_{ab}$ has boundary multiplicity 1 in $\Sigma_{a}$, and $\Sigma_{ab}$ has the boundary orientation in $\Sigma_{b}$ if and only if the same conclusion holds for $\Sigma_{b}$. Let $C$ be the set of components of $\Sigma_{ab}$. We have \begin{equation*}
        0 = \delta_*^2[\Sigma_{ab}] = \sum_{\alpha\in C}\left(\delta_{\alpha;\Sigma_a} + \delta_{\alpha;\Sigma_b}\right) [\Delta].
    \end{equation*}
    Since $\delta_{\alpha\Sigma_a}=1$ for all $\alpha\in C$, we must have $\delta_{\alpha\Sigma_b}=-1$ for all $\alpha\in C$. Therefore, $\Sigma_{ab}$ has the opposite of the boundary orientation in $\Sigma_b$.
\end{proof}

\subsection{Homology of Smooth Manifolds via $\delta$-chains}\label{subsection:homology-via-delta-chains} Following \cite{lipyanskiy}, we can define a version of singular homology for smooth manifolds where the chains are maps from abstract $\delta$-chains. Let $M$ be a smooth manifold. A \emph{$\delta$-chain in $M$} consists of an abstract $\delta$-chain $\Delta$ and a continuous map $\sigma:\Delta\to M$ which is smooth on strata, and for every inclusion of faces $\Sigma_a\subseteq\Sigma$ an extension $E\sigma:\EW_{\Sigma,\Sigma_a}\to M$ which is smooth near the fiber over $\vinfty$, such that the extensions respect the hierarchy of inclusions (cf. \cite[ch.~3,~Definition~1.6]{f-lin}). \begin{remark}
    We will write of the set of $\delta$-chains in $M$, by which we really mean the set of isomorphism classes of $\delta$-chains in $M$ where the underlying abstract $\delta$-chains are all embedded in some fixed infinite-dimensional Hilbert space.
\end{remark} \begin{example}
    If $[\fC^-],[\fC^+]$ are Morse-Bott critical sumbanifolds and $\ucM^+([\fC^-],[\fC^+])$ is regular, then the evaluation maps \begin{equation*}
        \ev^{\pm} : \ucM^+([\fC^-],[\fC^+]) \longrightarrow [\fC^{\pm}]
    \end{equation*}
    defined by sending a trajectory to its limit points are $\delta$-chains.
\end{example}

A $\delta$-chain $\sigma$ is oriented if $\Delta$ is oriented, in which case $-\sigma$ denotes the same $\delta$-chain with the orientation on $\Delta$ reversed. Addition of $\delta$-chains is defined using the disjoint union of abstract $\delta$-chains, \begin{equation*}
    (\sigma,\Delta)+(\sigma',\Delta') = (\sigma\sqcup \sigma', \Delta\sqcup\Delta').
\end{equation*} 
This makes the set of oriented $\delta$-chains in $M$ into an abelian monoid. Let $EC_*^\delta(M;\Z)=EC^\delta_*(M)$ denote the quotient of this monoid by the relation $\sigma+(-\sigma)=0$. Then, $EC^\delta_*(M)$ is an abelian group graded by the dimensions of the chains. The boundary operator $\partial:EC^\delta_*(M)\to EC^\delta_{*-1}(M)$ is defined as the sum of the codimension-1 faces, \begin{equation*}
    \sigma=\sum_{\operatorname{codim}_\Delta \Sigma = 1} \sigma|_\Sigma.
\end{equation*}
The codimension-1 faces are understood to have the boundary orientations. \begin{lemma}
    The boundary operator $\partial$ makes $EC^\delta_{*}(M)$ into a chain complex.
\end{lemma}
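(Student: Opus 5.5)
We need to show $\partial^2=0$ on $EC^\delta_*(M)$, and that $\partial$ is well defined on the quotient by $\sigma+(-\sigma)=0$. Well-definedness is immediate. Reversing the orientation of $\Delta$ reverses the boundary orientation of every codimension-1 face, since the sign $(-1)^{\dim\Delta+c_{\Delta,\Sigma}+1}$ of Definition \ref{def:boundary-orientation} does not depend on the orientation of $\Delta^\circ$. Hence $\partial(-\sigma)=-\partial\sigma$. Additivity under disjoint union is also clear, so $\partial$ descends to a homomorphism of graded groups of degree $-1$.

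Before computing $\partial^2$, I will check that each codimension-1 face $\sigma|_\Sigma$ is itself an oriented $\delta$-chain in $M$. The face $\Sigma$ inherits a stratification from $\Delta$, and its faces are exactly the faces $\Sigma_a\subseteq\Sigma$ of $\Delta$. The local thickenings, the maps $\bfS,\delta$, and the extensions $E\sigma$ restrict by the compatibility identifications in the definition of an abstract $\delta$-chain. The preferred orientations of $\R^{O}$ and $(0,\infty]^{N}$ for codimension-1 inclusions inside $\Sigma$ are those already given for $\Delta$. The diamond axiom for $\Sigma$ follows from the diamond axiom for $\Delta$. Finally, $\Sigma^\circ$ carries the boundary orientation. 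Therefore $\partial\sigma$ is a sum of oriented $\delta$-chains, and we can apply $\partial$ again.

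Next we compute
\begin{equation*}
\partial^2\sigma=\sum_{\Sigma}\ \sum_{\Sigma_{ab}\subseteq\Sigma}\sigma|_{\Sigma_{ab}},
\end{equation*}
where $\Sigma$ ranges over the codimension-1 faces of $\Delta$ and $\Sigma_{ab}$ over the codimension-1 faces of $\Sigma$, each oriented as a boundary of a boundary. Each such $\Sigma_{ab}$ is a codimension-2 face of $\Delta$. By the defining property of abstract $\delta$-chains, it lies in exactly two codimension-1 faces $\Sigma_a\neq\Sigma_b$ of $\Delta$, so it appears exactly twice in the sum. Lemma \ref{lem:canceled} says the boundary orientations on $\Sigma_{ab}$ induced through $\Sigma_a$ and through $\Sigma_b$ are opposite. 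The two appearances are therefore the same underlying $\delta$-chain, with $\Sigma_{ab}\hookrightarrow\Delta$ and the same restricted extensions $E\sigma$, but with opposite orientations. They cancel by the relation $\sigma+(-\sigma)=0$, so $\partial^2\sigma=0$.

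The main point requiring care is the assertion that the two copies of $\sigma|_{\Sigma_{ab}}$ are literally the same $\delta$-chain, so that their sum is zero in $EC^\delta_*(M)$. The abstract structure on $\Sigma_{ab}$ induced from $\Sigma_a$ and from $\Sigma_b$ must agree. The delicate case is the weakened condition $O_{\Sigma_a,\Sigma_{ab}}\subseteq O_{\Sigma,\Sigma_{ab}}\cap N_{\Sigma_a,\Sigma_{ab}}$: the face data on $\Sigma_{ab}$ should be determined by the data of inclusions $\Sigma_{abc}\subseteq\Sigma_{ab}$, which restrict from $\Delta$ through the compatibility identifications independently of the intermediate face. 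I would verify this directly from those identifications. I would also note that any face belonging to both $\Sigma_a$ and $\Sigma_b$ is handled componentwise, using the boundary-multiplicity argument already given in Lemma \ref{lem:canceled}.
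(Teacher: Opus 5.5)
Your proposal is correct and is essentially the paper's argument: the paper's proof is the single line ``by Lemma~\ref{lem:canceled}, $\partial^2=0$,'' and your additional checks (that $\partial$ respects the relation $\sigma+(-\sigma)=0$, and that codimension-1 faces are again oriented $\delta$-chains) spell out what the paper leaves implicit. The worry in your last paragraph resolves without further work. All the structure data — the sets $N,O$, the local thickenings, the preferred orientations, and the extensions $E\sigma$ — are assigned to each pair of faces of $\Delta$ directly. So the $\delta$-chain structure on $\Sigma_{ab}$ does not depend on whether it is reached through $\Sigma_a$ or $\Sigma_b$; only the induced orientation depends on the route, and that is exactly what Lemma~\ref{lem:canceled} controls.
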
\begin{proof}
    By Lemma \ref{lem:canceled}, $\partial^2=0$.
\end{proof}

At this point, we have a chain complex $(EC^\delta_*(M),\partial)$ but the resulting homology is not the singular homology of $M$. In order for our homology theory based on $\delta$-chains to satisfy appropriate versions of the Eilenberg-Steenrod axioms, we need to eliminate certain trivial and degenerate chains. \begin{definition}
    An oriented $\delta$-chain $(\sigma,\Delta)$ is \emph{trivial} if there is an orientation-reversing automorphism of $\Delta$ with respect to which $\sigma$ and the extensions $E\sigma$ are invariant.
\end{definition}
\begin{definition}
    A $\delta$-chain $(\sigma,\Delta)$ \emph{has small image} or is \emph{small} if there exists a $\delta$-chain $(\sigma',\Delta')$ with $\dim\Delta'<\dim \Delta$ such that the image of $\sigma$ is contained in that of $\sigma'$.
\end{definition}
For example, any $\delta$-chain $(\sigma,\Delta)$ with $\dim\Delta>\dim M$ is small. \begin{definition}
    An oriented $\delta$-chain $\sigma$ is \emph{degenerate} if $\sigma$ is small and $\partial \sigma=\alpha+\beta$ where $\alpha$ is trivial and $\beta$ is small. 
\end{definition}
\begin{definition}
    The \emph{singular $\delta$-chain group of $M$ (with $\Z$-coefficients)} is the graded abelian group \begin{equation*}
        C^\delta_{*}(M) = \frac{EC^\delta_*(M)}{\{[\sigma]+[\sigma']:\sigma\text{ is trivial and }\sigma'\text{ is degenerate}\}}.
    \end{equation*}
\end{definition}
If $\dim\sigma > \dim M + 1$, then $\sigma$ is degenerate. For future reference, we record the following lemma, which implies that 1-dimensional $\delta$-chains in a 0-dimensional manifold are degenerate. \begin{lemma}\label{lem:degenerate-1-chains}
    If $\dim\sigma=1$, then $\partial\sigma$ is trivial.
\end{lemma}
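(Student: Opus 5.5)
We need to show: if $\dim\sigma=1$, then $\partial\sigma$ is trivial. That is, the 0-dimensional chain $\partial\sigma$ admits an orientation-reversing automorphism compatible with $\sigma$ and its extensions.

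The plan is to classify the structure of a 1-dimensional abstract $\delta$-chain and pair up its boundary points. Let $(\sigma,\Delta)$ be a $\delta$-chain with $\dim\Delta=1$. Its codimension-1 faces are points, and it has no codimension-2 faces, so the compatibility conditions of an abstract $\delta$-chain reduce to the local models near each boundary point $p$.

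\textbf{Local models.} Near such a point, $\Delta$ carries a codimension-$c$ $\delta$-structure with $c=c_{\Delta,p}$. By the model described after Definition~\ref{def:boundary-orientation}, the structure $\W=\delta^{-1}(0)$ is, near $p$, a half-open interval ending at $p$. In every case the local thickening $\EW$ is $\R^0\times(0,\infty]^{c+1}$, with $\W$ the diagonal ray.

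\textbf{Global pairing.} Compactness of the stratified space $\Delta$ is implicit in its being a finite chain. Granting it, $\Delta^\circ$ is a 1-manifold whose closure adds finitely many endpoints. Hence $\Delta$ is a finite disjoint union of circles and closed arcs. Each arc contributes its two endpoints $p,q$ to $\partial\sigma$.

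I would compute the boundary orientations of $p$ and $q$ from Definition~\ref{def:boundary-orientation}. The sign is $(-1)^{\dim\Delta+c+1}=(-1)^{c}$, relative to the orientation coming from the preferred orientations of $(0,\infty]^{N}$ and $\R^{O}$, which is essentially outward-normal-first. With outward normals the two endpoints of an oriented arc get opposite signs, $-p$ and $+q$, up to the $c$-dependent sign corrections.

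\textbf{Main obstacle.} The hard step is that $\partial\sigma$ is a formal sum of points mapped to $M$, and $\sigma(p)\neq\sigma(q)$ in general. So ``trivial'' cannot simply mean an automorphism swapping $p$ and $q$; that swap does not commute with $\sigma$. I would therefore read triviality componentwise on $\partial\sigma$ as a 0-dimensional abstract $\delta$-chain, which is just a finite set of oriented points.

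The key observation is that a single point has no orientation-reversing automorphism. The only way $\partial\sigma$ can be trivial is that the notion of orientation on a 0-dimensional $\delta$-chain makes each oriented point isomorphic to its own negative. Concretely, orientations of a point live in $\{\pm1\}$, and the automorphism of $\Delta$ restricted to a point acts by the sign $(-1)^{c}$ when $\R^{O}$ has odd rank. If that is not the intended reading, I would instead argue that $\partial\sigma=\sum_{\text{arcs}}(\sigma(q)-\sigma(p))$ and exhibit triviality after quotienting.

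So the real content I would try first is to check the conventions. Verify that for a 0-dimensional $\delta$-chain the identity map reverses orientation, or that the degenerate-chain relation identifies these sums. Then conclude that $\partial\sigma$ lies in the trivial submonoid. This convention check is where I expect the argument to stand or fall.
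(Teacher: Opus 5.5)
There is a genuine gap, at exactly the step you flagged. Your observation is correct: for an arbitrary target $M$ the statement is false as written. A path $\sigma\colon[0,1]\to M$ between distinct points has $\partial\sigma=\{\sigma(1)\}-\{\sigma(0)\}$, and no orientation-reversing automorphism of these two points commutes with $\sigma$.

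Neither repair you suggest works. An automorphism acts on the orientation of a $0$-dimensional chain only by permuting its signed points, so the identity never reverses orientation, whatever $c$ and $O$ are. Moreover, if every oriented point were trivial, every $0$-chain would be killed in $C^\delta_0(M)$, contradicting Proposition~\ref{prop:delta-chain-homology}.

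The missing idea is to use the hypothesis under which the lemma is meant and used: the target is $0$-dimensional. This is stated in the sentence preceding the lemma, and the lemma is applied to $C^\delta_*([\fC])$ for isolated critical points. Then the argument runs as follows.
\begin{itemize}
\item $\sigma$ is constant on each connected component $K$ of $\Delta$.
\item The paper's input, \cite[Corollary~21.3.2]{kronheimer-mrowka} extended to codimension-$c$ $\delta$-structures, says that the points of $\partial K$, counted with their boundary orientations, sum to zero.
\item Hence in each fiber of $\sigma$ the positive and negative boundary points can be matched.
\item The involution exchanging matched points is an orientation-reversing automorphism of the $0$-dimensional chain $\partial\sigma$ that leaves $\sigma$ invariant. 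Since there are no proper faces, there is no extension data to check.
\end{itemize}

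Your route to the vanishing of the signed count also has a hole. The model in which $\W$ is a single ray ending at $p$ is only the basic example. In a general codimension-$c$ $\delta$-structure, $\delta$ is required to be transverse to $0$ only where $\bfS$ is finite. So several ends of $\Delta^\circ$ can converge to the same boundary point, and $\Delta$ need not be a disjoint union of arcs and circles.

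What does hold is that the signed number of ends at $p$ (its boundary multiplicity) equals $1$ when $p$ carries the orientation of Definition~\ref{def:boundary-orientation}. This is \cite[Lemma~21.3.1]{kronheimer-mrowka}, the same fact invoked in Lemma~\ref{lem:canceled}. Summing over $p$ and regrouping by the open arcs of $\Delta^\circ$, each of which has one incoming and one outgoing end, gives zero; that is the content of the corollary. Your phrase ``opposite signs up to $c$-dependent corrections'' leaves precisely this unverified. The factor $(-1)^{\dim\Delta+c+1}$ is chosen exactly so that no correction remains.
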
\begin{proof}
    This follows immediately from \cite[Corollary~21.3.2]{kronheimer-mrowka}, generalized to codimension-$c$ $\delta$-structures.
\end{proof}

\begin{proposition}\label{prop:delta-chain-homology}
    The boundary operator descends to a well-defined homomorphism $\partial: C^\delta_{*}(M)\to C^\delta_{*-1}(M)$ making $C^\delta_{*}(M)$ into a chain complex. The homology $H^\delta_*(M;\Z) = H_*(C^\delta_{*}(M),\partial)$ is naturally (with respect to smooth maps) isomorphic to the singular homology $H_*(M;\Z)$.
\end{proposition}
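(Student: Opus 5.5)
The plan follows Lipyanskiy's geometric homology and Lin's characteristic-2 treatment, adding signs throughout. The first step is well-definedness of $\partial$ on the quotient. I need that $\partial$ maps trivial chains to sums of trivial chains, and degenerate chains to sums of trivial and degenerate chains.

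If $(\sigma,\Delta)$ is trivial via an orientation-reversing automorphism $\phi$, then $\phi$ permutes the codimension-1 faces. It preserves the data $N,O,\bfS,\delta$ up to the identifications in the definition, so it respects the boundary orientation convention of Definition~\ref{def:boundary-orientation}. Each face $\Sigma$ is therefore sent to a face $\phi(\Sigma)$ with orientation reversed. Faces fixed by $\phi$ are trivial. Faces swapped in pairs contribute $\sigma|_\Sigma + \sigma|_{\phi\Sigma} = \sigma|_\Sigma - \sigma|_\Sigma$, which vanishes in $EC^\delta_*$.

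If $\sigma$ is degenerate with $\partial\sigma=\alpha+\beta$, then $\partial\sigma$ is small, since its image lies in that of $\sigma$. By Lemma~\ref{lem:canceled}, $\partial(\partial\sigma)=0$, so $\partial\sigma$ is itself degenerate. Combined with $\partial^2=0$, this shows $C^\delta_*(M)$ is a chain complex.

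For the comparison with singular homology, I would verify the Eilenberg--Steenrod axioms for $H^\delta_*$ on smooth manifolds (and pairs), together with a Mayer--Vietoris sequence, and then invoke uniqueness. Alternatively, and more concretely, I would follow \cite{lipyanskiy}: build a natural chain map from smooth singular chains, sending a smooth simplex to the $\delta$-chain given by the standard simplex with its corner structure. Corners of a simplex are codimension-$0$ $\delta$-structures, so the orientation conventions reduce to the usual alternating signs. I would then show this map is a quasi-isomorphism by induction over good covers via Mayer--Vietoris.

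Each ingredient transfers from the $\Z/2$ case in \cite[ch.~3]{f-lin} once signs are tracked:
\begin{itemize}
\item homotopy invariance, via $\Delta\times[0,1]$ with its product $\delta$-structure and the sign $(-1)^{\dim\Delta}$ in the boundary formula;
\item the dimension axiom, using Lemma~\ref{lem:degenerate-1-chains} to kill $1$-chains in a point, and the fact that all chains of dimension $\geq 2$ in a point are degenerate;
\item excision and Mayer--Vietoris, via subdivision.
\end{itemize}

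The main obstacle is subdivision: cutting a $\delta$-chain along a transverse hypersurface $\sigma^{-1}(H)$ so that it is homologous to the sum of its pieces. Two things must be checked. First, the cut produces $\delta$-chains whose new faces come with consistent $\R^{O}$ and $(0,\infty]^N$ orientations. Second, the new interior faces appear twice with opposite boundary orientations. I would handle this by arranging transversality of $\sigma$ and of each $E\sigma$ near the fibers over $\vinfty$ to $H$. The new faces are then honest manifold boundaries with $c=0$, and the cancellation follows as in Lemma~\ref{lem:canceled}. The remaining degenerate pieces are small and are discarded in $C^\delta_*$. Naturality with respect to smooth maps is immediate, since pushing forward by $f$ composes $\sigma$ and each $E\sigma$ with $f$ and commutes with $\partial$.
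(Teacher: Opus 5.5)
Your overall route matches the paper's. The paper's proof is only the remark that Lipyanskiy's argument \cite[Section~6]{lipyanskiy} carries over, and your proposal is a more detailed version of that transplant with signs tracked.

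One step of your well-definedness argument is wrong, however. You assert that a degenerate $\sigma$ has small boundary ``since its image lies in that of $\sigma$.'' Smallness is measured against the chain's own dimension. Smallness of $\sigma$ only provides a witness of dimension $\leq \dim\sigma-1=\dim\partial\sigma$, not one of dimension $<\dim\partial\sigma$. Here is a concrete counterexample. Let $\gamma\colon[-1,1]\to M$ be an embedded arc and $\sigma\colon D^2\to M$ be $\sigma(x,y)=\gamma(x)$. Then $\sigma$ is small and $\partial\sigma$ is trivial (invariant under $y\mapsto -y$), so $\sigma$ is degenerate. Yet $\partial\sigma$ has one-dimensional image, so it is neither small nor degenerate, and your conclusion ``$\partial\sigma$ is itself degenerate'' fails.

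The repair is to use the decomposition from the definition. Write $\partial\sigma=\alpha+\beta$ with $\alpha$ trivial and $\beta$ small. Your first step shows $\partial\alpha$ is trivial, since the automorphism restricts to an orientation-reversing automorphism of the union of the codimension-1 faces. Lemma~\ref{lem:canceled} gives $\partial\beta=-\partial\alpha$ in $EC^\delta_*(M)$. Hence $\beta$ is small with trivial boundary, i.e.\ degenerate, and $\partial\sigma=\alpha+\beta$ lies in the subgroup being divided out.

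Your sketch of the subdivision step also omits its essential point. You check that the cut pieces $\sigma_1,\sigma_2$ are $\delta$-chains and that the new interior faces cancel. That only shows $\partial(\sigma_1+\sigma_2)$ is the cut of $\partial\sigma$; it does not show that $\sigma$ and $\sigma_1+\sigma_2$ are homologous. For that you need an explicit chain. Let $g=f\circ\sigma$, with $t$ a regular value of $g$ on every stratum, and take
\[R=\{(p,s)\in\Delta\times[0,1] : g(p)\le t+K(1-s)\},\qquad K\gg 0,\]
mapped by $\sigma\circ\mathrm{pr}_1$. Its boundary consists of four parts:
\begin{itemize}
\item $\sigma$ at $s=0$;
\item $\sigma_1$ at $s=1$;
\item a graph face, which $\mathrm{pr}_1$ identifies with $\sigma_2$;
\item the same construction applied to $\partial\sigma$.
\end{itemize}
The construction carries trivial chains to trivial ones and degenerate chains to degenerate ones. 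Therefore a cycle differs from its cut by a boundary in $C^\delta_*(M)$, provided $t$ is regular for all the finitely many chains involved. For a two-set cover $\{U,V\}$, take $f$ equal to $0$ off $V$ and $1$ off $U$; then a single cut already lands in the subordinate subcomplex, which gives Mayer--Vietoris. This is exactly what the argument you and the paper both defer to supplies, so it is an omission in your sketch rather than a flaw in the approach.
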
 \begin{proof}
    This follows from the same argument used in \cite[Section~6]{lipyanskiy}.
\end{proof}

We will use the term \emph{$\delta$-chain} to refer both to $\delta$-chains and elements of $C^\delta_{*}(M)$. In \cite[ch.~3,~Section~1]{f-lin}, it is shown how the fiber product of two transverse $\delta$-chains itself has the structure of a $\delta$-chain. If $M$ is oriented, then there is an induced orientation on the fiber product of transverse oriented $\delta$-chains. We follow the convention that, if $f:A\to M,g:B\to M$ are transverse smooth maps of oriented manifolds, then the fiber product $f\times_M g$ is oriented as the preimage of the diagonal under $f\times g:A\times B\to M\times M$. \begin{lemma}\label{lem:fiber-product-boundary-orientation}
    Let $\sigma,\sigma'$ be transverse oriented $\delta$-chains in $M$. If $M$ is oriented, $\partial \sigma,\partial\sigma'$ are given the boundary orientations, and $\sigma\times_M\sigma',(\partial\sigma)\times_M\sigma',\sigma\times_M(\partial\sigma')$ are given the fiber product orientations, then the oriented boundary of $\sigma\times_M\sigma'$ is \begin{equation*}
        \partial(\sigma\times_M \sigma') = (-1)^{\dim M}\left((\partial \sigma)\times_M \sigma' + (-1)^{\dim\sigma}\sigma\times_M (\partial \sigma')\right).
    \end{equation*}
\end{lemma}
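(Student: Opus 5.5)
The statement is local, so I would reduce it to a computation in a model. The fiber product $\sigma\times_M\sigma'$ is the preimage of the diagonal $\Delta_M\subseteq M\times M$ under $\sigma\times\sigma'$. Its codimension-1 faces are exactly the fiber products $\Sigma\times_M\sigma'$ and $\sigma\times_M\Sigma'$, where $\Sigma,\Sigma'$ range over codimension-1 faces of $\Delta,\Delta'$ (cf. \cite[ch.~3, Section~1]{f-lin}). The obstruction and parameter sets are inherited from the factor in which the face lies. Hence it suffices to compare orientations face by face: for each face, compare the boundary orientation of Definition \ref{def:boundary-orientation} with the fiber product orientation of the oriented face.

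First I treat the product. The boundary orientation of Definition \ref{def:boundary-orientation} is arranged so that, in the standard model $\R^d\times\Pi$ of a codimension-$c$ $\delta$-structure, it agrees with the outward-normal-first convention. I would therefore first check that the sign $(-1)^{\dim\Delta+c+1}$ behaves multiplicatively under products exactly as in the manifold-with-boundary case. The point is that the local thickening of $\Delta\times\Delta'$ along $\Sigma\times\Delta'$ is $\EW\times\Delta'$, with the same $\bfS,\delta$ and the same $c$. This yields the usual Leibniz rule for the oriented product:
\begin{equation*}
\partial(\sigma\times\sigma')=(\partial\sigma)\times\sigma'+(-1)^{\dim\sigma}\sigma\times(\partial\sigma').
\end{equation*}
The same reduction lets me verify this on the top strata near a face, where everything is a genuine manifold with boundary. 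There the claim is the classical outward-normal-first statement for products.

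Second, I pass from the product to the preimage of the diagonal. Write $F=\sigma\times\sigma'$, let $P=F^{-1}(\Delta_M)$, and let $\nu$ denote the normal bundle of $\Delta_M$, oriented so that $T(M\times M)=T\Delta_M\oplus\nu$. The preimage orientation on $P$ is the one for which $TP\oplus F^*\nu\cong T(\Delta\times\Delta')$ as oriented bundles, in the convention fixed just before the lemma. Transversality is preserved at the boundary, so $\partial P=F|_{\partial}^{-1}(\Delta_M)$. The outward normal $n$ of $P$ is also outward for $\Delta\times\Delta'$.

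I then compare two orientations of $\partial P$. One is the boundary orientation of $P$, given by $n\oplus T\partial P\oplus\nu$. The other is the preimage orientation of the boundary of the product, given by $T\partial P\oplus\nu$ with $n$ first. Moving $n$ past the $\dim M$-dimensional summand $\nu$ costs $(-1)^{\dim M}$, since $\operatorname{rank}\nu=\dim M$. Combined with the Leibniz rule above, this gives the formula in the statement.

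The main obstacle I expect is the ordering convention in ``preimage of the diagonal''. The formula must come out with exactly the factor $(-1)^{\dim M}$ and no extra sign on the second term. I would fix the convention by the local model: $A=\R^a\times[0,\infty)$ or $\R^a$, $B=\R^b$ or $\R^b\times[0,\infty)$, $M=\R^m$, and $f,g$ linear projections. I would check both terms directly there. The $\delta$-structure faces ($c>0$) would then follow from the boundary-multiplicity argument of Lemma \ref{lem:canceled} and \cite[Lemma~21.3.1]{kronheimer-mrowka}, which show that the boundary orientation is determined by the top strata.
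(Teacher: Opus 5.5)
Your architecture is the same as the paper's. You prove the formula for manifolds with boundary (or corners) by a product Leibniz rule plus commuting normal directions, and then reduce the faces with $c>0$ to that case via an argument like \cite[Lemma~21.3.1]{kronheimer-mrowka}.

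The problem is the one step meant to produce the factor $(-1)^{\dim M}$, which is the whole content of the lemma. You fixed the convention $TP\oplus F^*\nu\cong T(\Delta\times\Delta')$. Under it, the boundary orientation of $P$ is $n\oplus T\partial P\oplus\nu$. The preimage orientation of $\partial P$ inside $\partial(\Delta\times\Delta')$, which is itself oriented with $n$ first, is $n\oplus(T\partial P\oplus\nu)$. These are the same orientation, so $n$ never has to move past $\nu$. Done correctly, your computation gives $\partial(\sigma\times_M\sigma')=(\partial\sigma)\times_M\sigma'+(-1)^{\dim\sigma}\sigma\times_M(\partial\sigma')$, with no factor $(-1)^{\dim M}$.

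A concrete check: take $M=\R$, $\sigma\colon[0,\infty)\hookrightarrow\R$ and $\sigma'=\mathrm{id}_\R$. In your normal-last convention, $\partial(\sigma\times_M\sigma')$ and $(\partial\sigma)\times_M\sigma'$ are both the point $0$ with sign $-1$. The statement predicts opposite signs.

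The stated formula holds for the normal-first convention $F^*\nu\oplus TP\cong T(\Delta\times\Delta')$, which is the Guillemin--Pollack preimage convention. That is the reading of ``preimage of the diagonal'' you must adopt. With it:
\begin{itemize}
\item the boundary orientation of $P$ is $\nu\oplus n\oplus T\partial P$;
\item the preimage orientation of $\partial P$ in $\partial(\Delta\times\Delta')$ is $n\oplus\nu\oplus T\partial P$.
\end{itemize}
These differ by $(-1)^{\operatorname{rank}\nu}=(-1)^{\dim M}$, uniformly over faces of type $\Sigma\times\Delta'$ and $\Delta\times\Sigma'$. Combined with your Leibniz rule, this gives exactly the stated formula. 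In the example above, this convention makes $(\partial\sigma)\times_M\sigma'$ the point $0$ with sign $+1$, as required.

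Once the convention is fixed this way, the rest of your plan, including the reduction of the $\delta$-structure faces to the top strata, is the paper's argument.
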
 \begin{proof}
    In the case that $\sigma,\sigma'$ are smooth maps of oriented manifolds-with-boundary or manifolds-with-codimension-2-corners, the lemma is straightforward to prove by commuting normal bundles past each other. In the general case, an argument analogous to the proof of \cite[Lemma~21.3.1]{kronheimer-mrowka} can be used to pass to the manifold-with-corners case.
\end{proof}

If $\cF$ is a countable family of $\delta$-chains in $M$, we can replace the full complex $C^\delta_*(M)$ with the subcomplex $C^{\delta;\cF}_*(M)$ consisting of $\delta$-chains which are transverse to all members of $\cF$, and the inclusion $C^{\delta;\cF}_*(M)\to C^\delta_*(M)$ is a quasi-isomorphism. Sometimes, we will omit $\cF$ from our notation.

\subsection{Orienting Moduli Spaces of Trajectories}\label{subsection:orienting-moduli-spaces}
As in \cite[Section~20]{kronheimer-mrowka}, moduli spaces of trajectories will be oriented relative to orientation sets at the endpoints. Let $\fa^-,,\fa^+\in\cC^{\sigma}_{k,\delta}(Y)$ and let $\fq_1,\fq_2$ be perturbations. Fix a compact interval $I=[t_1,t_2]$. Let $\gamma_0\in\cC^{\tau}_{k,\delta}(I\times Y)$ be a configuration with boundary restrictions $\gamma|_{\{t_i\}\times Y} = \fa_i$ for $i=1,2$ and let $\fp$ be a path in a large Banach space of perturbations with $\fp(t_i)=\fq_i$ for $i=1,2$. Consider the operator \begin{equation*}
    Q_{\gamma_0,\fp} =\fF_{\fq} \oplus \bfd^{\tau,\dagger}_{\gamma_0}: \cT^{\tau}_{1,\delta,\gamma}(I\times Y) \to \cV^{\tau}_{0,\delta,\gamma_0}(I\times Y)\oplus L^{2}(Y;i\R).
\end{equation*}
encoding the linearized Seiberg--Witten equations and gauge-fixing condition. By augmenting $Q_{\gamma_0,\fp}$ with boundary conditions as in \cite[Section~20.3]{kronheimer-mrowka}, we obtain a Fredholm operator $P_{\gamma_0,\fp}$. The determinant line bundle $\det(P_{\gamma_0,\fp})$ descends to a real line bundle over $\mathcal{B}^{\tau}_{k,\delta}(I\times Y;[\fa^-],[\fa^+])$. For $t_1$ sufficiently negative and $t_2$ sufficiently positive, the orientation set $\Lambda([\fa^-],\fq_1,[\fa^+],\fq_2)$ is identified with the set of local orientations of this line bundle over $[\gamma]$. There are also orientation sets $\Lambda([\fa],\fq)$ such that when $[\fa_1],[\fa_2]$ are critical points there is a canonical bijection \begin{equation*}
    \Lambda([\fa^-],\fq_1,[\fa^+],\fq_2) = \Lambda([\fa_1],\fq_1)\times_{\Z/2}\Lambda([\fa_2],\fq_2).
\end{equation*} 
When a perturbation is fixed, we write $\Lambda([\fa^-],[\fa^+])$ and $\Lambda([\fa])$.

Fix a tame perturbation $\fq$ such that all critical points are Morse-Bott and all moduli spaces of trajectories are regular. Let $[\fC^-],[\fC^+]\subseteq \cB^{\sigma}_{k,\delta}(Y)$ be critical submanifolds and let $[\fU_i]\subseteq [\fC_i]$ be contractible open sets with smooth lifts $\fU_i\subseteq \cC^{\sigma}_{k,\delta}(Y)$. Consider a solution $[\gamma]\in\ucM([\fU^-],[\fU^+])$ limiting to $[\fa^-]\in[\fU^-], [\fa^+]\in[\fU^+]$. On the infinite cylinder, the operator $P_{\gamma}=Q_{\gamma,\fq}$ is already Fredholm, without introducing boundary conditions. There is a canonical isomorphism \begin{equation*}
    \det(P_{\gamma}) = \det(P_{\gamma_{-}}) \otimes \det(P_{\gamma_{0}}) \otimes \det(P_{\gamma_{+}}) 
\end{equation*}
where $P_{\gamma_{-}}$, $P_{\gamma_{0}}$, $P_{\gamma_{+}}$ are Fredholm operators defined by adding appropriate boundary conditions to $Q_{\gamma,\fq}$ over the intervals $(-\infty,t_1]$, $[t_1,t_2]$, $[t_2,+\infty)$, respectively. Suppose $t_1$ is sufficiently negative and $t_2$ is sufficiently positive. The orientation set $\Lambda([\fa^-],[\fa^+])$ is identified with the set of orientations of the line $\det(P_{\gamma_0})$. In the Morse-Bott setting, $P_{\gamma_{-}}$ is invertible while  $P_{\gamma_{+}}$ is surjective Fredholm and the kernel of $P_{\gamma_{+}}$ is identified with $T_{\fa^+}\fU^+ = T_{[\fa^+]}[\fC^+]$. Hence, an orientation of $\det(P_{\gamma})$ is induced by a choice of an element of $\Lambda([\fa^-],[\fa^+])$ along with an orientation of $T_{[\fa^+]}[\fC^+]$.

When the moduli space $\ucM([\fC^-],[\fC^+])$ is not boundary-obstructed, the operator $P_{\gamma}$ is surjective and its kernel is $T_{[\gamma]}\ucM([\fU^-],[\fU^+])$, so $\det T_{[\gamma]}\ucM([\fU^-],[\fU^+]) = \det(P_{\gamma})$. In the boundary-obstructed case, $P_{\gamma}$ picks up a 1-dimensional cokernel and $\det(P_{\gamma}) = \det T_{[\gamma]}\ucM([\fU^-],[\fU^+]) \otimes \operatorname{coker} (P_{\gamma_0})^*$. The line $\operatorname{coker} (P_{\gamma_0})$ has a canonical orientation. In both cases, there is a correspondence between orientations of $T_{[\gamma]}\ucM([\fU^-],[\fU^+])$ and orientations of $\det(P_\gamma)$. Following \cite[Section~20.6]{kronheimer-mrowka}, there is an analogous correspondence for the orientations of reducible moduli spaces. We use $\bar\Lambda([\fa^-],[\fa^+])$ and $\bar\Lambda([\fa])$ as notation for the reducible orientation sets.

\begin{proposition}\label{prop:moduli-spaces-orientable}
    For a regular $\Pin(2)$-non-degenerate perturbation, the moduli spaces are orientable. Moreover, there is a canonical correspondence between orientations of $\ucM([\fC^-],[\fC^+])$ and elements of $\Lambda([\fa^-],[\fa^+])$, for any choice of points $[\fa^{\pm}]\in \fC^{\pm}$. Likewise, there is a canonical correspondence between orientations of $\ucM^{red}([\fC^-],[\fC^+])$ and elements $\bar\Lambda([\fa^-],[\fa^+])$.
\end{proposition}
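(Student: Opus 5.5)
The plan is to reduce orientability of $\ucM([\fC^-],[\fC^+])$ to triviality of the determinant line bundle $\det(P)$ over the moduli space, and then to trivialize that line bundle by contractibility and simple-connectivity arguments at the ends.

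First, over a contractible chart $[\fU^-]\times[\fU^+]$ of the endpoints, the discussion preceding the statement gives a canonical isomorphism
\[
\det T_{[\gamma]}\ucM \cong \det(P_{\gamma_0})\otimes \det T_{[\fa^+]}[\fC^+],
\]
in the boundary-obstructed case after tensoring with the canonically oriented line $\coker(P_{\gamma_0})$. Note that $P_{\gamma_-}$ is invertible, so it contributes a canonically trivial factor. So $\ucM([\fC^-],[\fC^+])$ is orientable provided two things hold. The first is that the line bundle $\det(P_{\gamma_0})$, equivalently the orientation double cover $\Lambda([\fa^-],[\fa^+])$, is trivial as $[\fa^\pm]$ vary over $[\fC^\pm]$. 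The second is that $[\fC^+]$ is orientable.

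For a $\Pin(2)$-non-degenerate perturbation, the critical submanifolds are either isolated points or Morse--Bott copies of $\CP^1$ lying over $\spin$ reducibles. Both are orientable and simply connected, hence connected with trivial $\pi_1$. To handle the orientation sets, I would use the product decomposition $\Lambda([\fa^-],[\fa^+])=\Lambda([\fa^-])\times_{\Z/2}\Lambda([\fa^+])$. It then suffices to show that $\Lambda([\fa])$ forms a trivial double cover over each $[\fC]$. This is automatic: a double cover of a simply connected space is trivial. To be precise, $\Lambda$ is defined as a local system by parallel transport in the configuration space, and this local system restricted to $[\fC]\cong\CP^1$ has no monodromy since $\pi_1(\CP^1)=0$.

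I would then fix an orientation of each $\CP^1$, for example the complex orientation from the blow-up description as the projectivized spinor eigenspace. Together with a section of $\Lambda([\fC^\pm])$, this determines an orientation of the line bundle $\det(P)$ over the whole space of paths from $[\fC^-]$ to $[\fC^+]$. That space is the relevant component of $\cB^{\tau}_{k,\delta}(\R\times Y;[\fC^-],[\fC^+])$, which fibers over $[\fC^-]\times[\fC^+]$ with fibers whose $\pi_1$-action on orientations is trivial by \cite[Section~20]{kronheimer-mrowka}. Restricting to $\ucM$ gives the canonical correspondence between orientations of $\ucM$ and elements of $\Lambda([\fa^-],[\fa^+])$. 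The correspondence is independent of the chosen points $[\fa^\pm]$ precisely because of the trivializations just described.

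The reducible case runs identically with $\bar\Lambda$ and the reducible operator, following \cite[Section~20.6]{kronheimer-mrowka}.

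I expect the main obstacle to be the boundary-obstructed and Morse--Bott interaction. One has to check that the identification of $\ker P_{\gamma_+}$ with $T[\fC^+]$, and the canonical orientation of the cokernel, vary continuously over the whole family, including over the $\CP^1$ where the cokernel direction is the normal boundary-unstable direction. I would verify this by writing $P_{\gamma_+}$ in the weighted spaces as a family of operators that is constant up to conjugation by the $\Pin(2)$-action, which acts transitively on the $\CP^1$ family.
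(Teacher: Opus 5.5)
Your argument is correct and is essentially the paper's. The critical submanifolds are isolated points or complex $\CP^1$'s, so they are canonically oriented and simply connected. Hence the double cover of orientation sets $\Lambda([\fa^-],[\fa^+])$ is trivial over them, and a choice of section orients $\ucM([\fC^-],[\fC^+])$; your path-space fibration and the splitting $\Lambda([\fa^-])\times_{\Z/2}\Lambda([\fa^+])$ just spell this out in more detail.

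One caveat on your last paragraph: $\Pin(2)$ does not act transitively on a reducible $\CP^1$. It acts only through $\Pin(2)/S^1\cong\Z/2$, as the antipodal map $\jmath$, so the conjugation argument you propose would fail. It is also unnecessary. The kernels of the continuous family of surjective Fredholm operators $P_{\gamma_+}$ form a bundle identified with $T[\fC^+]$, and the boundary-obstructed cokernel is canonically oriented. The paper takes both facts for granted in the discussion preceding the proposition.
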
 \begin{proof}
    For a $\Pin(2)$-non-degenerate perturbation, the critical submanifolds are all canonically oriented, since every critical submanifold is either an isolated point or a $\CP^1$ with a complex structure. Therefore, for any given solution $[\gamma]\in\ucM([\fC^-],[\fC^+])$ with limit points $[\fa^-]$ and $[\fa^+]$, there is a correspondence between orientations of $T_{[\gamma]}\ucM([\fC^-],[\fC^+])$ and elements of $\Lambda([\fa^-],[\fa^+])$.

    As $[\fa^+]$ is varied within $[\fC^+]$, the orientation sets $\Lambda([\fa^-],[\fa^+])$ form a double cover of $[\fC^+]$. Since $[\fC^+]$ is simply-connected, this double cover is trivial. A choice of section induces an orientation on $\ucM([\fC^-],[\fC^+])$.

    Orientations of the reducible moduli spaces work similarly.
\end{proof} \begin{remark}
    In the proof of Proposition \ref{prop:moduli-spaces-orientable}, we observed that the orientation sets $\Lambda([\fa^-],[\fa^+])$ are identified for different $[\fa^+]$ in the same critical submanifold $[\fC^+]$, or for different $[\fa^-]$ in the same $[\fC^-]$. For the same reason, there is an identification of the orientation sets $\Lambda([\fa])$ for different $[\fa]$ in the same critical submanifold $[\fC]$. When it makes sense, we will write $\Lambda([\fC^-],[\fC^+])$ and $\Lambda([\fC])$.
\end{remark}

In order to define Floer homology, we also need to understand how the orientations on the moduli spaces behave under gluing. This is the same as in the Morse setting. Recall that the orientation sets come with a composition law \begin{equation}\label{eq:orientation-composition-law}
    \Lambda([\fa],[\fb])\times \Lambda([\fb],[\fc]) \to \Lambda([\fa],[\fc])
\end{equation} and a there is a similar composition law for the reducible orientation sets. The codimension-1 strata of a moduli space $\ucM^{+}([\fC^-],[\fC^+])$ are fiber products of intermediate moduli spaces. Using the composition law for orientation sets, we can compose orientations of the factors of such a stratum to get an orientation on $\ucM^{+}([\fC^-],[\fC^+])$, and likewise for reducible moduli spaces. The proofs of \cite[Proposition~20.5.2,~Proposition~20.6.2,~Proposition~20.6.4]{kronheimer-mrowka} can be modified to apply in the $\Pin(2)$-non-degenerate case, telling us how the product orientations on such strata compares to the boundary orientations.

\subsection{Floer Homology}\label{subsection:floer-homology}{

Suppose that $Y$ is equipped with a $\spin$ structure $\fs_Y$. Fix a perturbation $\fq$ all critical points are non-degenerate or all critical points are $\Pin(2)$-non-degenerate and which is regular in the appropriate sense. The set of critical submanifolds can be partitioned into irreducible, boundary-stable, and boundary-unstable critical points as \begin{equation*}
    \Cr = \Cr^ o\sqcup\Cr^ s\sqcup\Cr^ u.
\end{equation*}
If $[\fC]$ is a critical submanifold, let $\Z_{[\fC]}$ denote the infinite cyclic group $\Lambda([\fC])\times_{\Z/2}\Z$, where $\Z/2$ acts freely on $\Lambda([\fC])$ and by the sign representation on $\Z$. For $ x\in\{ o, s, u\}$, define \begin{equation*}
     C^x = \bigoplus_{[\fC]\in \Cr^x} C^{\delta;\cF}_*([\fC];\Z_{[\fC]})
\end{equation*}
where $\cF$ is the family of $\delta$-chains consisting of moduli spaces of trajectories with the forward-evaluation map $\ev^+$ and $C^{\delta;\cF}_*([\fC];\Z_{[\fC]}) = C^{\delta;\cF}_*([\fC];\Z)\otimes \Z_{[\fC]}$. 

For any two critical submanifolds $[\fC^-],[\fC^+]$, a choice of generators $1^{\pm}\in \Z_{[\fC^\pm]}$ induces an orientation on $\ucM^+([\fC^-],[\fC^+])$ via the composition law \eqref{eq:orientation-composition-law}, and there is a well-defined homomorphism \begin{equation*}
    \epsilon = \epsilon([\fC^-],[\fC^+]) : C^{\delta;\cF}_*([\fC^-];\Z_{[\fC^-]}) \longrightarrow C^{\delta;\cF}_*([\fC^+];\Z_{[\fC^+]})
\end{equation*}
such that \begin{equation*}
    \epsilon(\sigma \otimes 1^-) = (\sigma \times_{[\fC^-]} \ucM^+([\fC^-],[\fC^+]))\otimes 1^+.
\end{equation*}
The homomorphism $\epsilon$ does not depend on the choice of generators, and we will omit the generators from our notation.

For the reducible moduli spaces, there are corresponding homomorphisms $\bar\epsilon$. When $[\fC^-]$ is boundary-unstable and $[\fC^+]$ is boundary-stable, the reducible moduli space $\ucM^{red}([\fC^-],[\fC^+])$ is the boundary of $\ucM([\fC^-],[\fC^+])$, and we define $\bar\epsilon: C^{\delta,\cF}_*([\fC^-];\Z_{[\fC^-]}) \to C^{\delta,\cF}_*([\fC^+];\Z_{[\fC^+]})$ by giving $\ucM^{red}([\fC^-],[\fC^+])$ the boundary orientation and taking fiber products. For the other cases, where the moduli space consists entirely of reducibles, $\bar\epsilon$ is defined by comparison with $\epsilon$ in the same way as \cite[(22.6)]{kronheimer-mrowka}.

Let $w$ be the involution defined on homogeneous $\delta$-chains by $w(\sigma)=(-1)^{\dim\sigma}\sigma$. For $ x, y\in\{ o, s, u\}$, we define a homomorphism $\partial_{ y}^{ x}: C^ x\to  C^ y$ by \begin{equation*}
    \partial_{ y}^{ x} = \begin{dcases}
        \sum_{[\fC^-]\in\Cr^ x}\sum_{[\fC^+]\in\Cr^ y}\epsilon([\fC^-],[\fC^+])\circ w,&\qquad  x\neq  y \\
        -\,\partial +\hspace{-10pt} \sum_{[\fC^-]\in\Cr^ x}\sum_{[\fC^+]\in\Cr^ y}\epsilon([\fC^-],[\fC^+])\circ w,&\qquad  x= y
    \end{dcases}
\end{equation*}
where $\partial$ is the internal boundary operator on $\delta$-chains. Likewise, for $ x, y\in\{ s, u\}$, we define a homomorphism $\dbar_{ y}^{ x}$ by \begin{equation*}
    \dbar_{ y}^{ x} = \begin{dcases}
        \sum_{[\fC^-]\in\Cr^ x}\sum_{[\fC^+]\in\Cr^ y}\bar\epsilon([\fC^-],[\fC^+])\circ w,&\qquad  x\neq  y \\
        -\,\partial +  \hspace{-10pt} \sum_{[\fC^-]\in\Cr^ x}\sum_{[\fC^+]\in\Cr^ y}\bar\epsilon([\fC^-],[\fC^+])\circ w,&\qquad  x= y.
    \end{dcases}
\end{equation*}
\begin{definition}\label{def:monopole-chain-complexes}
    The three monopole chain complexes are the chain complexes with underlying abelian groups \begin{equation*}
            \CMbar =  C^ s\oplus  C^ u,\quad \CMto =  C^ o\oplus  C^ s,\quad \CMfrom =  C^ o\oplus C^ u
        \end{equation*}
       and with differentials \begin{equation*}
            \dbar = \begin{bmatrix}
                \dbar_{ s}^{ s} & \dbar_{ s}^{ u} \\
                \dbar_{ u}^{ s} & \dbar_{ u}^{ u}
            \end{bmatrix}, \quad \
            \dcheck = \begin{bmatrix}
                \partial_{ o}^{ o} & -\partial_{ o}^{ u}w\,\dbar_{ u}^{ s} \\
                \partial_{ s}^{ o} & \dbar_{ s}^{ s} - \partial_{ s}^{ u}w\,\dbar_{ u}^{ s}
            \end{bmatrix},\quad \
            \dhat = \begin{bmatrix}
                \partial_{ o}^{ o} & \partial_{ o}^{ u} \\
                -\dbar_{ u}^{ s}w\,\partial_{ s}^{ o} & -\dbar_{ u}^{ u} - \partial_{ u}^{ s}w\,\partial_{ s}^{ u}
            \end{bmatrix}.
    \end{equation*}
\end{definition}
\begin{proposition}\label{prop:differentials}
    The differentials in Definition \ref{def:monopole-chain-complexes} satisfy \begin{equation*}
        \dbar^2 = \dcheck^2=\dhat^2=0.
    \end{equation*}
\end{proposition}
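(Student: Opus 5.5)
We will follow the strategy of \cite[Proposition~22.1.4]{kronheimer-mrowka}, transplanted to the Morse--Bott $\delta$-chain setting of \cite{f-lin}, where the analogous statement is proved with $\Z/2$-coefficients. The new ingredient is tracking signs. The identities $\dbar^2=\dcheck^2=\dhat^2=0$ will be deduced from a family of \emph{component identities} among the operators $\partial^x_y$ and $\dbar^x_y$. Each identity comes from computing the oriented boundary of a fiber product $\sigma\times_{[\fC^-]}\ucM^+([\fC^-],[\fC^+])$ in two ways and using $\partial^2=0$ on $C^\delta_*$.

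\textbf{Step 1: the oriented boundary of a fiber product.} Apply Lemma \ref{lem:fiber-product-boundary-orientation} with $M=[\fC^-]$ to
\begin{equation*}
\partial\bigl(\sigma\times_{[\fC^-]}\ucM^+([\fC^-],[\fC^+])\bigr).
\end{equation*}
This gives a term involving $\partial\sigma$, with sign $(-1)^{\dim[\fC^-]}$, and a term involving $\partial\ucM^+$, with sign $(-1)^{\dim[\fC^-]+\dim\sigma}$. Since critical submanifolds are points or copies of $\CP^1$, $\dim[\fC^-]$ is even, so the first sign is trivial. The second sign is exactly what the involution $w$ in the definition of $\partial^x_y$ absorbs. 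Next, use the classification of codimension-1 faces in Example \ref{ex:cylinder} to write $\partial\ucM^+([\fC^-],[\fC^+])$ as a sum of three kinds of terms:
\begin{enumerate}[label=(\roman*)]
\item unobstructed broken trajectories $\ucM^+([\fC^-],[\fA])\times_{[\fA]}\ucM^+([\fA],[\fC^+])$;
\item obstructed broken trajectories through a boundary-unstable pair $[\fA],[\fB]$;
\item the reducible locus when $[\fC^-]$ is boundary-unstable and $[\fC^+]$ is boundary-stable.
\end{enumerate}
The signs comparing the boundary orientation (Definition \ref{def:boundary-orientation}, with its extra $(-1)^{c}$) to the orientation composed via \eqref{eq:orientation-composition-law} are supplied by the $\Pin(2)$-non-degenerate versions of \cite[Propositions~20.5.2, 20.6.2, 20.6.4]{kronheimer-mrowka}. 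Fiber products of fiber products will be reassociated using associativity of $\times_{[\fC]}$ together with the even dimension of each $[\fC]$, so the reassociation introduces no signs.

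\textbf{Step 2: the component identities.} Summing over critical submanifolds, Step 1 should produce the analogues of \cite[Lemma~22.1.5]{kronheimer-mrowka}. These include
\begin{equation*}
\partial^o_o\partial^o_o-\partial^u_o w\,\dbar^s_u\,\partial^o_s=0,
\end{equation*}
the corresponding identities for $\partial^o_s$, $\partial^u_o$ and $\partial^u_s$, and $\dbar^2=0$ componentwise. The self-terms $\partial^x_x$ contain $-\partial$. Their squares contribute $\partial^2=0$, and their cross terms with $\epsilon\circ w$ are exactly the $\partial\sigma$ contributions from Step 1; the sign $-1$ in front of $\partial$ and the $w$ are what make these cancel. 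For $\dbar$, which involves reducible moduli spaces only, there are no obstructed faces. There the identity is simply the boundary formula together with Lemma \ref{lem:canceled}, and Lemma \ref{lem:degenerate-1-chains} handles the degenerate 1-dimensional pieces. Terms of dimension greater than $\dim[\fC^+]+1$ vanish in $C^\delta_*$ by degeneracy.

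\textbf{Step 3: matrix algebra.} Once the component identities hold with the stated signs, $\dcheck^2=0$ and $\dhat^2=0$ follow from multiplying the $2\times2$ matrices exactly as in \cite[Proposition~22.1.4]{kronheimer-mrowka}. This step uses only that $w^2=1$ and that $w$ anticommutes with operators of odd degree.

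The main obstacle is the sign bookkeeping in Step 2 for the boundary-obstructed faces (ii) and the reducible face (iii). This is the only place where the $(-1)^c$ in the boundary orientation, the canonical orientation of $\coker(P_{\gamma_0})$, and the definition of $\bar\epsilon$ via \cite[(22.6)]{kronheimer-mrowka} must all interact correctly. I would first verify these identities in the Morse case, where all critical submanifolds are points and the argument reduces verbatim to Kronheimer--Mrowka's. I would then check that passing to a $\CP^1$ endpoint changes nothing, because $\CP^1$ is even-dimensional and canonically complex-oriented, so it inserts no additional signs into Lemma \ref{lem:fiber-product-boundary-orientation} or into the gluing.
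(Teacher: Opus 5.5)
Your proposal is correct and is essentially the paper's argument. The paper writes out only the top-left entry of $\dcheck^2$, namely $-\partial^o_o\partial^o_o+\partial^u_o w\,\dbar^s_u\partial^o_s$, and checks it the way you describe: expand via the definitions, apply Lemma \ref{lem:fiber-product-boundary-orientation} with $\dim[\fC^-]$ even while $w$ absorbs the $(-1)^{\dim\sigma}$, and use the $\Pin(2)$-version of \cite[Proposition~20.5.2]{kronheimer-mrowka}, which puts the sign $(-1)^{\dim\ucM^+([\fC^-],[\fA])}$ on unobstructed faces and one extra sign on obstructed faces; it then declares the other entries similar. Two small points to tighten: the obstructed faces break through a boundary-stable $[\fA]$ followed by a boundary-unstable $[\fB]$ (not through a boundary-unstable pair), and in Step 3 it is safer to check each matrix entry directly, since the $w$'s already sit where the entries need them, whereas commuting $w$ past the $\partial^x_y$ is not a uniform sign because these operators are not homogeneous in $\delta$-chain dimension.
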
 \begin{proof}
    We will prove the identity \begin{equation*}
        - \partial_{ o}^{ o}\partial_{ o}^{ o}\
        + \partial_{ o}^{ u}w\dbar_{ u}^{ s}\partial_{ s}^{ o} = 0.
    \end{equation*}
    This will verify that the top-left matrix entry of $\dcheck^2$ vanishes. The proof is similar for the other terms. Consider a chain $\sigma\in C_*^\delta([\fC^-];\Z_{[\fC^-]})$. We have \begin{equation}\label{eq:do2}
        \begin{split}
            \partial_ o\partial_ o \sigma &= \partial_{ o}\left(-\partial \sigma + (-1)^{\dim\sigma}\sum_{[\fC^+]\in\Cr^ o} \sigma \times_{[\fC^-]} \ucM^+([\fC^-],[\fC^+])\right) \\
            &= \partial^2\sigma - \sum_{[\fC^+]\in\Cr^ o}(-1)^{\dim\sigma}\partial (\sigma \times_{[\fC^-]} \ucM^+([\fC^-],[\fC^+])) \\
            & + \sum_{[\fA]\in\Cr^{ o}} \sum_{[\fC^+]\in\Cr^{ o}}(-1)^{\dim\sigma + \dim(\sigma \times_{[\fC^-]} \ucM^+([\fC^-],[\fA]))}\sigma \times_{[\fC^-]}\ucM^+([\fC^-],[\fA],[\fC^+]) \\
            &= \partial^2\sigma - \sum_{[\fC^+]\in\Cr^ o}(-1)^{\dim\sigma} \partial (\sigma \times_{[\fC^-]} \ucM^+([\fC^-],[\fC^+])) \\
            & + \sum_{[\fA]\in\Cr^{ o}} \sum_{[\fC^+]\in\Cr^{ o}} (-1)^{\dim\ucM^+([\fC^-],[\fA])}\sigma \times_{[\fC^-]} \ucM^+([\fC^-],[\fA],[\fC^+]) \\
            &= - \sum_{[\fC^+]\in\Cr^ o}(-1)^{\dim\sigma}\partial (\sigma \times_{[\fC^-]} \ucM^+([\fC^-],[\fC^+])) \\
            & + \sum_{[\fA]\in\Cr^{ o}} \sum_{[\fC^+]\in\Cr^{ o}}(-1)^{\dim\ucM^+([\fC^-],[\fA])}\sigma \times_{[\fC^-]} \ucM^+([\fC^-],[\fA],[\fC^+])
        \end{split}
    \end{equation}
    where in the second to last equation of \eqref{eq:do2} we use $\dim(\sigma \times_{[\fC^-]} \ucM^+([\fC^-],[\fA])) = \dim\sigma + \dim \ucM^+([\fC^-],[\fA]) - \dim[\fC^-]$ and the fact that $\dim[\fC^-]$ is even. By a similar expansion, we have \begin{equation}\label{eq:dobs}
        \partial_{ o}^{ u}w\dbar_{ u}^{ s}\partial_{ s}^{ o} = \sum_{[\fA]\in\Cr^{ s}}\sum_{[\fB]\in\Cr^{ u}}\sum_{[\fC^+]\in\Cr^{ o}} (-1)^{\dim\ucM^+([\fC^-],[\fA])}\sigma \times_{[\fC^-]} \ucM^+([\fC^-],[\fA],[\fB],[\fC^+]).
    \end{equation}
    Combining \eqref{eq:do2} and \eqref{eq:dobs} yields
    \begin{equation}\label{eq:d22}
        \begin{split}
            & (-1)^{\dim\sigma}\left(- \partial_{ o}^{ o}\partial_{ o}^{ o}\
        + \partial_{ o}^{ u}w\dbar_{ u}^{ s}\partial_{ s}^{ o}\right)\sigma = \sum_{[\fC^+]\in\Cr^ o} \partial(\sigma \times_{[\fC^-]} \ucM^+([\fC^-],[\fC^+])) \\
        & - \sum_{[\fC^+]\in\Cr^ o} (\partial\sigma) \times_{[\fC^-]} \ucM^+([\fC^-],[\fC^+]) \\
        & - (-1)^{\dim\sigma}\sum_{[\fA]\in\Cr^{ o}}\sum_{[\fC^+]\in\Cr^{ o}} (-1)^{\dim \ucM^+([\fC^-],[\fA])}  \sigma\times_{[\fC^-]}\ucM^+([\fC^-],[\fA],[\fC^+]) \\
        & - (-1)^{\dim\sigma} \sum_{[\fA]\in\Cr^{ s}}\sum_{[\fB]\in\Cr^{ u}}\sum_{[\fC^+]\in\Cr^{ o}}(-1)^{\dim\ucM^+([\fC^-],[\fA]) + 1}\sigma \times_{[\fC^-]} \ucM^+([\fC^-],[\fA],[\fB],[\fC^+]).
        \end{split}
    \end{equation}
    A version of \cite[Proposition~20.5.2]{kronheimer-mrowka} holds in our setting, so that the boundary of $\ucM^+([\fC^-],[\fC^+])$ consists of faces of the form $(-1)^{\dim \ucM^+([\fC^-],[\fA])}\ucM^+([\fC^-],[\fA],[\fC^+])$ and boundary-obstructed faces of the form $(-1)^{\dim\ucM^+([\fC^-],[\fA])+1}\ucM^+([\fC^-],[\fA],[\fB],[\fC^+])$. By Lemma \ref{lem:fiber-product-boundary-orientation}, the right-hand side of \eqref{eq:d22} vanishes.
\end{proof} \begin{remark}
    Notice how the placement of the involution $w$ in the definition of the differential causes the necessary signs to emerge. The reader might compare to the case of finite-dimensional Morse-Bott theory, \cite[Lemma~3.4]{austin-braam}.
\end{remark}

The monopole Floer complexes have relative $\Z$-gradings and absolute gradings by the transitive $\Z$-set $\bJ(\fs_Y)$, as in \cite[ch.~3,~Section~2]{f-lin}. There is a canonical $\Z$-equivariant map $\bJ\to \Z/2$, and we write $\gr^{(2)}$ for the $\Z/2$ grading. In the case that $c_1(\fs_Y)$ is torsion, the action of $\Z$ on $\bJ$ is free. We write $\gr$ for the relative $\Z$-grading and $\Gr$ for the absolute $\bJ$-grading. The $\bJ$-grading along with the $\Z$-grading coming from the dimension of $\delta$-chains make the complexes $\CMbar,\CMto,\CMfrom$ into bi-graded complexes such that the differentials have total degree $-1$.

\begin{definition}
    The monopole Floer homology groups are the $\bJ$-graded abelian groups \begin{equation*}
        \HMbar_*(Y,\fs_Y)=H_*(\CMbar),\quad\HMto_*(Y,\fs_Y)=H_*(\CMto),\quad\HMfrom_*(Y,\fs_Y)=H_*(\CMfrom).
    \end{equation*}
\end{definition}

\begin{remark}
    If $[\fC]=[\fa]$ is a genuinely non-degenerate critical point, so that $\dim[\fC]=0$, then \begin{equation*}
        C_*^{\delta;\cF}([\fC];\Z_{[\fC]})\cong\Z_{[\fC]}. 
    \end{equation*}
    This follows from Lemma \ref{lem:degenerate-1-chains}, which implies that there all positive-dimensional $\delta$-chains in a 0-dimensional manifold are degenerate. When the perturbation used is genuinely non-degenerate, our monopole Floer homology groups are the same as those in \cite{kronheimer-mrowka}. 
\end{remark}

There is a long exact sequence relating the three versions of monopole Floer homology, \begin{equation}\label{eq:long-exact-sequence}
    \begin{tikzcd}
        \cdots \ar[r] & \HMbar(Y,\fs_Y) \ar[r,"i_*"] & \HMto_*(Y,\fs_Y) \ar[r,"j_*"] & \HMfrom_*(Y,\fs_Y) \ar[r,"p_*"] & \HMbar_{*-1}(Y,\fs_Y) \ar[r] & \cdots
    \end{tikzcd}
\end{equation} 
where the maps $i_*,j_*,p_*$ are induced by (anti-)chain maps \begin{equation*}
    i =\begin{bmatrix}
        0 & -\partial_{ o}^{ u} \\
        1 & -\partial_{ s}^{ u}
    \end{bmatrix},\quad j= \begin{bmatrix}
        1 & 0 \\
        0 & -\dbar_{ u}^{ s}
    \end{bmatrix}, \quad p = \begin{bmatrix}
        \partial_{ s}^{ o} & \partial_{ s}^{ u} \\
        0 & 1
    \end{bmatrix}.
\end{equation*}
\begin{definition}
    The reduced monopole Floer homology group is \begin{equation*}
        \HMred(Y,\fs_Y) = \im j_* = \ker p_* = \coker i_*.
    \end{equation*}
\end{definition}
In the case of a $\Pin(2)$-non-degenerate perturbation, there is an involution $\jmath$ on each of the three complexes $\CMbar,\CMto,\CMfrom$ induced by the action of $j\in\Pin(2)$. \begin{proposition}\label{prop:j-on-homology}
    The involution $\jmath$ induces involutions $\jmath_*$ on $\HMbar,\HMto,\HMfrom$ with respect to which the long exact sequence \eqref{eq:long-exact-sequence} is natural.
\end{proposition}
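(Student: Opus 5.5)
The plan is to build $\jmath$ at the chain level and show that it commutes, up to the correct sign, with every component $\partial^x_y$ and $\dbar^x_y$ of the differentials.

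Because the perturbation $\fq$ is $\Pin(2)$-equivariant, the action of $j$ on the blown-up configuration space preserves $\pounds$ and its gradient. It therefore permutes the critical submanifolds and preserves the partition $\Cr=\Cr^o\sqcup\Cr^s\sqcup\Cr^u$. For each pair $[\fC^-],[\fC^+]$ it gives a diffeomorphism
\begin{equation*}
    \jmath:\ucM^+([\fC^-],[\fC^+])\to\ucM^+(\jmath[\fC^-],\jmath[\fC^+]).
\end{equation*}
This diffeomorphism respects the $\delta$-chain structure: the stratification, the sets $N,O$, the thickenings and the maps $\bfS,\delta$, since each of these is defined naturally from the equations. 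It also intertwines the evaluation maps $\ev^\pm$. In addition, $\jmath$ acts on the linearized operators $P_\gamma$, so it induces bijections
\begin{equation*}
    \Lambda([\fC])\to\Lambda(\jmath[\fC]),\qquad \bar\Lambda([\fC])\to\bar\Lambda(\jmath[\fC])
\end{equation*}
that are compatible with the composition law \eqref{eq:orientation-composition-law}.

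With these in hand, I define $\jmath$ on $C^{\delta;\cF}_*([\fC];\Z_{[\fC]})$ by
\begin{equation*}
    \jmath(\sigma\otimes\lambda)=(\jmath\circ\sigma)\otimes\jmath(\lambda).
\end{equation*}
The family $\cF$ is $\jmath$-invariant, so this preserves the transversality condition.

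On a $\CP^1$ critical submanifold, $j$ acts antiholomorphically as the antipodal map, which reverses orientation. I expect this forces a sign correction: one can either absorb it into the identification on $\Lambda([\fC])$, or check that the fiber-product conventions of Lemma \ref{lem:fiber-product-boundary-orientation} absorb it automatically.

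The key steps are then:
\begin{enumerate}
    \item $\jmath$ commutes with the internal boundary $\partial$ and with $w$. This holds because $\jmath$ is a diffeomorphism of the underlying $\delta$-chains and boundary orientations are natural.
    \item $\jmath\circ\epsilon([\fC^-],[\fC^+])=\epsilon(\jmath[\fC^-],\jmath[\fC^+])\circ\jmath$, and similarly for $\bar\epsilon$. This follows from naturality of fiber products, provided $\jmath$ maps the orientation of $\ucM^+$ induced by $1^-\otimes 1^+$ to the orientation induced by $\jmath 1^-\otimes\jmath 1^+$.
    \item Summing over critical submanifolds, which $\jmath$ permutes, gives $\jmath\partial^x_y=\partial^x_y\jmath$ and $\jmath\dbar^x_y=\dbar^x_y\jmath$. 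Hence $\jmath$ commutes with $\dbar$, $\dcheck$ and $\dhat$, and with the (anti-)chain maps $i,j,p$, since these are assembled from the same blocks.
    \item $\jmath^2=1$ on chains, because $j^2=-1$ lies in the $S^1$ of constant gauge transformations and therefore acts trivially on $\cB^\sigma$ and on orientation sets. The induced maps $\jmath_*$ are thus involutions, and the long exact sequence \eqref{eq:long-exact-sequence} is natural with respect to them.
\end{enumerate}

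The main obstacle is step~2: the orientation comparison. One has to show that the map on determinant lines induced by $j$, which is complex antilinear on the spinor part, is compatible with the canonical isomorphisms
\begin{equation*}
    \det P_\gamma\cong\det P_{\gamma_-}\otimes\det P_{\gamma_0}\otimes\det P_{\gamma_+}.
\end{equation*}
In particular, on $\ker P_{\gamma_+}=T[\fC^+]$ its action must match the chosen orientation of $\CP^1$, and the canonical orientation of the cokernel must be preserved in the boundary-obstructed case. I would first try to avoid pointwise computation with a continuity argument. The sign discrepancy between $\jmath$ on moduli spaces and the product $\jmath$ on orientation sets is locally constant and multiplicative under gluing, and it is trivial over reducibles by a direct check: there, $j$ acts on a complex eigenspace of the Dirac operator. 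By the gluing compatibility, any discrepancy would have to be a coboundary-type sign $\nu([\fC^+])/\nu([\fC^-])$ depending only on the endpoints. Such a sign can be removed by redefining $\jmath$ on $\Z_{[\fC]}$ by $\nu([\fC])$, and once it is removed, $\jmath$ is a chain map.
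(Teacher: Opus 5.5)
Your proposal is correct and follows the paper's argument. The paper simply asserts that $\jmath$ acts on the coefficient groups $\Z_{[\fC]}$ and on the compactified trajectory spaces compatibly with orientations, so that it commutes with $\epsilon$, $\bar\epsilon$ and (being homogeneous) with $w$, and hence with $\dbar,\dcheck,\dhat,i,j,p$. Your extra care over orientations is justified: $j$ acts as the antipodal map on each $\CP^1$ family and reverses its complex orientation, which produces the coboundary sign $s([\fC^-])s([\fC^+])$ (with $s=-1$ on $\CP^1$ families and $+1$ on irreducibles) that must be absorbed into the identification $\Z_{[\fC]}\to\Z_{\jmath[\fC]}$, and you should add only that this correcting sign is $\jmath$-invariant, so the rescaled map still squares to the identity as in your step~4.
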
 \begin{proof}
    The involution $\jmath$ induces isomorphisms $\Z_{[\fC]}\xrightarrow{\sim} \Z_{\jmath[\fC]}$ and $\ucM^+([\fC^-],[\fC^+])\xrightarrow{\sim}\ucM^+(\jmath[\fC^-],\jmath[\fC^+])$, affecting the orientations such that $\jmath$ commutes with the isomorphisms $\epsilon$ and $\bar\epsilon$. Since $\jmath$ is homogeneous, it also commutes with  $w$. It follows that $\jmath$ commutes with each of $\dbar,\dcheck,\dhat,i,j,p$.
\end{proof} \begin{remark}
    In the proof of Proposition \ref{prop:j-on-homology}, we implicitly use the identification $\bar\fs_Y=\fs_Y$ to identify the coefficient groups $\Z_{[\fC]}$ and $\Z_{\jmath[\fC]}$. In general, these groups are only identified after tensoring with a system of local coefficients; see \cite[\textsection~25.5]{kronheimer-mrowka}.
\end{remark}
From Proposition \ref{prop:j-on-homology}, it follows that $\jmath$ also induces an involution $\jmath_*$ on $\HMred$.

}

\subsection{Cobordisms \& Invariance}\label{subsection:cobordisms-and-invariance}{

In Subsection \ref{subsection:floer-homology}, we defined the monopole Floer homology groups associated to $Y$. The definition requires the choice of a Riemannian metric and a perturbation. Following \cite{bloom} and \cite{f-lin}, we will now proceed to define maps on the (completed) monopole Floer homology groups induced by cobordisms, and use these to prove that the monopole Floer homology groups do not depend on the metric or perturbation chosen. 

Let $(Y^{-},\fs_{Y^{-}}),(Y^{+},\fs_{Y^+})$ be $\spin$ 3-manifolds and $(W,\fs_W):(Y^{-},\fs_{Y^{-}})\to (Y^{+},\fs_{Y^+})$ be a $\spin$ cobordism. The direction of the arrow indicates that the oriented boundary of $W$ is $\partial W = -Y^{-}\sqcup Y^{+}$. Let $p_1,\ldots,p_m \in W$ be points in the interior and let $W_{\vec p}$ be the cobordism $Y^{-}\sqcup S^3\sqcup\ldots\sqcup S^3\to Y^{+}$ obtained from $W$ by removing small open balls around each of the points $p_i$. Let $S_i^3$ denote the $S^3$ component corresponding to the point $p_i$. Let $W_{\vec p, \infty}$ be the open 4-manifold obtained from $W_{\vec p}$ by attaching cylindrical ends to each of the boundary components. 

Fix perturbations over each of the boundary components of $W_{\vec p}$ which $\Pin(2)$-non-degenerate and regular in the appropriate sense, and such that over each $S_i^3$ there is a unique (reducible) critical point and no irreducible critical points before blowing up. Extend these perturbations to a regular perturbation over $W_{\vec p,\infty}$. We can use the moduli spaces on $W_{\vec p, \infty}$ to define maps \begin{equation*}
    HM^{\circ}(W; \vec p): HM^{\circ}(Y^{-},\fs_{Y^-}) \otimes \HMfrom(S^3)\otimes \cdots \HMfrom(S^3) \to HM^{\circ}(Y^{+},\fs_{Y^+})
\end{equation*}
where $HM^{\circ} \in \{\HMbar,\HMto,\HMfrom\}$. Orienting the relevant moduli spaces as $\delta$-chains requires a few words. \begin{example}\label{ex:cobordism}
    Let $[\fC^{-}],[\fC^{+}]$ be Morse-Bott critical submanifolds over $-Y^{-},Y^{+}$, respectively and let $[\fC_i]$ be boundary-stable critical submanifolds over $-S_i^3$ for $i\in \{1,\ldots,m\}$. Consider the Seiberg--Witten moduli space \begin{equation*}
        \Delta = \cM^+(W_{\vec p, \infty}; [\fC^{-}], [\fC_1], \ldots, [\fC_m], [\fC^{+}]).
    \end{equation*}
    The codimension-1 faces of such a moduli space are classified according to \cite[Proposition~24.6.10]{kronheimer-mrowka}. There are \begin{enumerate}[label=(\roman*)]
        \item moduli spaces broken at one of the ends, \begin{equation*}
            \Sigma = \cM^+(W_{\vec p, \infty}; [\fA], [\fC_1], \ldots, [\fC_m], [\fC^{+}]) \times \ucM^+([\fA],[\fC^-])
        \end{equation*} or \begin{equation*}
            \Sigma = \cM^+(W_{\vec p, \infty}; [\fC^{-}], [\fC_1], \ldots, \hspace{-9pt}\underset{\text{repl. }[\fC_i]}{[\fA]}\hspace{-9pt}, \ldots [\fC_m], [\fC^{+}]) \times  \ucM^+([\fA],[\fC_i])
        \end{equation*} or \begin{equation*}
            \Sigma = \cM^+(W_{\vec p, \infty}; [\fC^-], [\fC_1], \ldots, [\fC_m], [\fA]) \times \ucM^+([\fA],[\fC^+]),
        \end{equation*}
        for which $N_{\Delta,\Sigma}=\{[\fA]\}$ and $O_{\Delta,\Sigma}=\emptyset$.
        \item moduli spaces broken at one of the ends with a boundary-obstructed component, \begin{equation*}
            \Sigma = \cM^+(W_{\vec p, \infty}; [\fA], [\fC_1], \ldots, [\fC_m], [\fC^{+}]) \times \ucM^+([\fA],[\fB],[\fC^-])
        \end{equation*} or \begin{equation*}
            \Sigma = \cM^+(W_{\vec p, \infty}; [\fC^{-}], [\fC_1], \ldots, \hspace{-9pt}\underset{\text{repl. }[\fC_i]}{[\fA]}\hspace{-9pt}, \ldots [\fC_m], [\fC^{+}]) \times \ucM^+([\fA],[\fB],[\fC_i])
        \end{equation*} or \begin{equation*}
            \Sigma = \cM^+(W_{\vec p, \infty}; [\fC^-], [\fC_1], \ldots, [\fC_m], [\fC])\times \ucM^+([\fA],[\fB],[\fC^+]),
        \end{equation*}
        for which $N_{\Delta,\Sigma}=\{[\fA],[\fB]\}$ and $O_{\Delta,\Sigma}=\{[\fB]\}$, ordered such that $[\fA]<[\fB]$.
        \item moduli spaces broken at exactly two ends, one of which must be the $Y^+$ end. These are boundary-obstructed of corank 1, \begin{equation*}
            \Sigma = \cM^+(W_{\vec p, \infty}; [\fA], [\fC_1], \ldots, [\fC_m], [\fB]) \times \ucM^+([\fA],[\fC^-])\times \ucM^+([\fB],[\fC^+]), 
        \end{equation*} or \begin{equation*}
            \Sigma = \cM^+(W_{\vec p, \infty}; [\fC^{-}], [\fC_1], \ldots, \hspace{-9pt}\underset{\text{repl. }[\fC_i]}{[\fA]}\hspace{-9pt}, \ldots [\fC_m], [\fB]) \times \ucM^+([\fA],[\fC_i])\times \ucM^+([\fB],[\fC^+]),
        \end{equation*}
        for which $N_{\Delta,\Sigma}=\{[\fA],[\fB]\}$ and $O_{\Delta,\Sigma}=\{[\fB]\}$, ordered such that $[\fA]<[\fB]$.
        \item in the case that the moduli both irreducibles and reducibles, the reducible part \begin{equation*}
            \Sigma = \ucM^{red+}(W_{\vec p, \infty}; [\fC^{-}], [\fC_1], \ldots, [\fC_m], [\fC^{+}]),
        \end{equation*}
        for which $N_{\Delta,\Sigma}=\{[\fC^+]\}$ and $O_{\Delta,\Sigma}=\emptyset$.
    \end{enumerate}
    We can also consider Seiberg--Witten moduli spaces parameterized over some manifold $P$, possibly with boundary, \begin{equation*}
        \Delta = \cM^+(W_{\vec p, \infty}; [\fC^{-}], [\fC_1], \ldots, [\fC_m], [\fC^{+}])_P.
    \end{equation*} 
    Then, there are strata of the four types listed above as well as \begin{itemize}
        \item[(v)] the moduli space of unbroken solutions lying over $\partial P$, \begin{equation*}
            \Sigma = \cM^+(W_{\vec p, \infty}; [\fC^{-}], [\fC_1], \ldots, [\fC_m], [\fC^{+}])_{\partial P},
        \end{equation*}
        for which $N_{\Delta,\Sigma}=\{\partial P\}$ and $O_{\Delta,\Sigma}=\emptyset$.
    \end{itemize}
    In the case that $\Delta$ consists entirely of reducibles, the moduli space has the structure of a manifold-with boundary along each of its codimension-1 strata, so that $N$ consists of a single element and $O=\emptyset$.

    For a general inclusion of faces $\Sigma_a\subseteq\Sigma$, the set $N_{\Sigma,\Sigma_a}$ records breaking of solutions, and $O_{\Sigma,\Sigma_a}$ records boundary-obstructed breaking. When $\Sigma\subseteq \Sigma_a$ has codimension 1, the set $N_{\Sigma,\Sigma_a}$ is ordered as above. The ordering on the set of critical points gives preferred orientations on $(0,\infty]^{N_{\Sigma,\Sigma_{a}}}$ and $\R^{O_{\Sigma,\Sigma_{a}}}$ for every codimension-1 inclusion of faces $\Sigma_a\subseteq\Sigma$.

    We may also consider moduli spaces of solutions parameterized by some oriented base manifold $P$ with boundary. These are oriented similarly.
\end{example}

The moduli spaces considered in Example \ref{ex:cobordism} may also be viewed as moduli spaces \begin{equation*}
    \cM^+( [\fC^{-}], [\fC_1], \ldots, [\fC_m], W_{\vec p, \infty}, [\fC^+])_P
\end{equation*}
where we view $[\fC^-]$ as a critical point over the incoming end $Y^-$ and $[\fC_i]$ as boundary-unstable critical points over the incoming ends $S_i^3$ for $i\in\{1,\ldots,m\}$. Such moduli spaces are oriented relative to the orientation sets $\Lambda([\fC^-]),\Lambda([\fC^+])$, and Example \ref{ex:cobordism} explains how to give these moduli spaces the structure of orientable abstract $\delta$-chains. Note that critical points over the incoming ends $Y^-,S_i^3$ have the opposite order compared to the corresponding critical points over $-Y^-,-S_i^3$. Given a moduli space \begin{equation*}
    \Delta = \cM^+( [\fC^{-}], [\fC_1], \cdots, [\fC_m], W_{\vec p, \infty}, [\fC^+])_P,
\end{equation*}
there is an evaluation map \begin{equation*}
    \ev^- : \Delta \to [\fC^-]\times [\fC_1]\times \cdots \times [\fC_m].
\end{equation*}
Let $\cF_W$ be the family of $\delta$-chains given by such evaluation maps.

As in \cite[ch.~3,~Section~3]{f-lin}, for $ x\in \{ o, s, u\}$, consider the subgroup \begin{equation*}
     C^x(Y^-,\fs_{Y^-}; \vec p) \subseteq  C^ x(Y^-,\fs_{Y^-})\otimes  C^ u (S_1^3) \otimes \cdots \otimes  C^ u(S_m^3)
\end{equation*}
generated by products $\sigma^-\otimes \sigma_1\otimes\ldots\otimes \sigma_m$ such that $\sigma^-\times \sigma_1\times\ldots\times \sigma_m$ is transverse to all $\delta$-chains in the family $\cF_W$. Define \begin{equation*}
    \begin{split}
        \CMbar(Y^-,\fs_{Y^-};\vec p) &=  C^ s(Y^-,\fs_{Y^-};\vec p) \oplus  C^ u(Y^-, \fs_{Y^-};\vec p) \\
        \CMto(Y^-,\fs_{Y^-};\vec p) &=  C^ o(Y^-,\fs_{Y^-};\vec p) \oplus  C^ s(Y^-, \fs_{Y^-};\vec p)  \\
        \CMfrom(Y^-,\fs_{Y^-};\vec p) &=  C^ o(Y^-,\fs_{Y^-};\vec p) \oplus  C^ u(Y^-, \fs_{Y^-};\vec p).
    \end{split}
\end{equation*}
We chose perturbations over the ends $S_i^3$ such that $\CMfrom(S_i^3) =  C^ u(S_i^3)$ as abelian groups. For each flavor of monopole Floer complex $CM^{\circ}\in\{\CMbar,\CMto,\CMfrom\}$, the subgroup \begin{equation*}
    CM^{\circ}(Y^-, \fs_{Y^-};\vec p)\subseteq CM^{\circ}(Y^-, \fs_{Y^-})\otimes\bigotimes_{i=1}^m \CMfrom(S_i^3)
\end{equation*}
is a subcomplex and the inclusion is a quasi-isomorphism; see \cite[ch.~3,~Lemma~3.1]{f-lin} for the characteristic 2 case. The homology $\HMfrom(S^3)$ is $\Z[U]$, where $\deg U = -2$. The homology of $CM^\circ(Y^-; \vec p)$ is canonically identified with \begin{equation*}
    HM^\circ(Y^-, \fs_{Y^-}; \vec p) = CM^\circ(Y^-, \fs_{Y^-}) \otimes \bigotimes_{i=1}^{m} \Z[U_i].
\end{equation*}

Suppose that $W$ is given a homology orientation. Analogously to the maps $\epsilon$ defined in the previous subsection, given critical submanifolds $[\fC^\pm]\in \Cr(Y^\pm), [\fC_i]\in \Cr^u(S_i^3)$, we can define \begin{equation*}
    \begin{split}
            \mu = \mu([\fC^-],[\fC_1],\ldots,[\fC_m],[\fC^+]): 
            C_*^{\delta;\cF_W}([\fC^-];\Z_{[\fC^-]}) \otimes \bigotimes_{i=1}^{m} C_*^{\delta;\cF_W}([\fC_i];\Z_{[\fC_i]}) \to C_*^{\delta;\cF}([\fC^+];\Z_{[\fC^+]})
    \end{split}
\end{equation*}
by taking fiber products with the moduli space $\cM_{\fs_W}^+([\fC^-],[\fC_i],\ldots,[\fC_m],W_{\vec p,\infty},[\fC^+])$, in the same way that $\epsilon$ was defined. Likewise, we define maps $\bar\mu$ by taking fiber products with reducible moduli spaces. As with $\epsilon$, in the unstable-stable case, the reducible moduli space is oriented as the boundary of the full moduli space, and in the unstable-unstable case, $\bar\mu$ and $\mu$ differ by a possible sign; see the remarks after \cite[(25.6)]{kronheimer-mrowka}.

For $x,y\in\{o,s,u\}$, we define homomorphisms $m_y^x:C^x(Y^-,\fs_{Y^-};\vec p)\to C^y(Y^+,\fs_{Y^+})$ by \begin{equation*}
    m_y^x = m_y^x(W,\fs_W; \vec p) = \sum_{[\fC^-]\in \Cr^x(Y^-)} \sum_{\underset{i\in\{1,\ldots,m\}}{[\fC_i]\in\Cr^u(S_i^3)}}\sum_{[\fC^+]\in \Cr^y(Y^+)} \, \mu([\fC^-],[\fC_1],\ldots,[\fC_m],[\fC^+]) \circ w.
\end{equation*}
Likewise, for $x,y\in\{s,u\}$, we define homomorphisms $\bar m_y^x: C^x(Y^-, \fs_{Y^-};\vec p)\to C^y(Y^+)$ by \begin{equation*}
    \bar m_y^x = \bar m_y^x(W,\fs_W; \vec p) = \sum_{[\fC^-]\in \Cr^x(Y^-)} \sum_{\underset{i\in\{1,\ldots,m\}}{[\fC_i]\in\Cr^u(S_i^3)}}\sum_{[\fC^+]\in \Cr^y(Y^+)} \bar\mu([\fC^-],[\fC_1],\ldots,[\fC_m],[\fC^+]) \circ w.
\end{equation*}
We define maps on the chain complexes \begin{equation*}
    \begin{split}
        \mbar = \mbar(W,\fs_W; \vec p) &: \CMbar(Y^-,\fs_{Y^-};\vec p) \longrightarrow \CMbar(Y^+,\fs_{Y^+}) \\
        \mcheck = \mcheck(W,\fs_W; \vec p) &: \CMto(Y^-,\fs_{Y^-};\vec p) \longrightarrow \CMto(Y^+,\fs_{Y^+})  \\
        \mhat = \mhat(W,\fs_W; \vec p) &: \CMfrom(Y^-,\fs_{Y^-};\vec p) \longrightarrow \CMfrom(Y^+,\fs_{Y^+}) 
    \end{split}
\end{equation*}
by $\mbar = \begin{bmatrix}
        \mbar_s^s & \mbar_s^u \\
        \mbar_u^s & \mbar_u^u
    \end{bmatrix}$, $\mcheck = \begin{bmatrix}
        m_o^o & -m_o^u w\, \dbar_u^s(Y^-;\vec p) - \partial_o^u(Y^+) w \, \mbar_u^s \\
        m_s^o& \mbar_s^s - \partial_s^u(Y^+) w\, \mbar_u^s
    \end{bmatrix}$, and \begin{equation*}
     \mhat = \begin{bmatrix}
        m_o^o & m_o^u \\
        \mbar_u^s w\, \partial_{s}^{o}(Y^-;\vec p) - \dbar_{u}^{s}(Y^+)m_{s}^{o} & \mbar_{u}^{u} + \mbar_{u}^{s} w\, \partial_{s}^{u}(Y^-; \vec p) - \dbar_{u}^{s}(Y^+) w \, m_{s}^{u}
    \end{bmatrix}.
\end{equation*} \begin{remark}
    Whereas the analogous maps in \cite{f-lin} were defined only on the negative $\bJ$-completions, we do not need to work with completions here as we have fixed a $\spin$ (and hence $\spinc$) structure on $W$.
\end{remark} 
\begin{proposition}\label{prop:chain-maps}
    The maps $\mbar,\mcheck,\mhat$ are chain maps. At the level of homology, the maps \begin{equation*}
        m^{\circ} : HM^{\circ}(Y^-,\fs_{Y^-}) \otimes \Z[U_1] \otimes \cdots \otimes \Z[U_m] \longrightarrow HM^{\circ}(Y^+,\fs_{Y^+}) 
    \end{equation*}
    do not depend on the placement of the punctures $\vec p$ nor on the metric nor perturbation on $W$ (assuming a fixed cylindrical metric and perturbation near the boundary).
\end{proposition}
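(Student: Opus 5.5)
The plan mirrors Proposition \ref{prop:differentials}: first the chain-map identities at chain level, then invariance by a standard chain-homotopy argument. For the chain-map property, I would verify identities like $\dcheck(Y^+)\,\mcheck = \mcheck\,\dcheck(Y^-;\vec p)$ entry by entry, each obtained from a codimension-1 boundary count. For example, the top-left entry amounts to
\begin{equation*}
\partial_o^o(Y^+)\,m_o^o - m_o^o\,\partial_o^o(Y^-;\vec p) + (\text{terms through } s,u) = 0.
\end{equation*}
To get it, I would take $\sigma\in C_*^{\delta;\cF_W}([\fC^-];\Z_{[\fC^-]})\otimes\bigotimes_i C_*([\fC_i])$ and apply Lemma \ref{lem:fiber-product-boundary-orientation} to the fiber product of $\sigma$ with $\cM^+([\fC^-],[\fC_1],\ldots,[\fC_m],W_{\vec p,\infty},[\fC^+])$. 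This expresses $\partial$ of that fiber product through $(\partial\sigma)\times(\cdots)$ and $\sigma\times\partial\cM^+$. I would then expand $\partial\cM^+$ using the classification of codimension-1 faces in Example \ref{ex:cobordism}: types (i)--(iv), with no type (v) since $P$ is a point.

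Each face type corresponds to a composite term in the chain-map identity:
\begin{itemize}
\item type (i) gives the compositions $\epsilon\circ\mu$ and $\mu\circ\epsilon$;
\item type (ii) gives the terms involving $\bar\epsilon$ through an unstable--stable pair;
\item type (iii) gives the terms $\partial^u_o(Y^+) w\,\mbar^s_u$ and $m_o^u w\,\dbar_u^s$;
\item type (iv) gives the reducible terms $\bar\mu$.
\end{itemize}
The signs come from two sources. The first is the placement of $w$, exactly as in the proof of Proposition \ref{prop:differentials}, using that every critical submanifold has even dimension (a point or $\CP^1$). The second is the orientation comparison between product and boundary orientations on each face, which is the cobordism analogue of \cite[Proposition~20.5.2]{kronheimer-mrowka}, namely \cite[Propositions~24.7.2, 25.x]{kronheimer-mrowka} adapted to the Morse--Bott setting via the composition law \eqref{eq:orientation-composition-law} and the homology orientation of $W$.

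One further check is needed for $\mcheck$ and $\mhat$: entries such as $\mbar_u^s w\,\partial_s^o$ require knowing that the boundary-obstructed faces carry the extra sign $(-1)^{c}$ from Definition \ref{def:boundary-orientation}. This extra sign is what produces the minus signs in the off-diagonal matrix entries. Finally, the chains for $S^3_i$ are unstable and the complexes $\CMfrom(S_i^3)$ have trivial differential beyond $-\partial$, so they contribute only through $\partial\sigma_i$.

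For independence of choices, I would use parametrized moduli spaces $\cM^+(\ldots)_P$ with $P=[0,1]$:
\begin{itemize}
\item For a path of metrics and perturbations on $W$ fixed near the boundary, I would define $K^\circ$ by fiber products with $\cM^+_{[0,1]}$, assembled into a matrix with the same pattern as $\mcheck$ and $\mhat$.
\item The boundary of the parametrized space now includes the type (v) faces over $\partial P=\{1\}-\{0\}$, giving $m^\circ_1-m^\circ_0 = \partial K + K\partial$.
\item Moving the punctures $\vec p$ is handled the same way, via a path of embeddings, which amounts to a family of metrics on a fixed $W_{\vec p}$.
\item The fact that the result is a map on $HM^\circ(Y^-;\vec p)\cong HM^\circ(Y^-)\otimes\bigotimes\Z[U_i]$ uses the quasi-isomorphism of the transverse subcomplex (\cite[ch.~3, Lemma~3.1]{f-lin}). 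Proposition \ref{prop:delta-chain-homology} shows that replacing chains by transverse ones does not change homology.
\end{itemize}

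The main obstacle is sign bookkeeping on the corank-1 boundary-obstructed faces of type (iii), which involve two ends simultaneously. There the $\delta$-structure orientation depends on the ordering $[\fA]<[\fB]$ and on commuting the $\ucM^+$ factors at the two ends past each other. I would first settle this in the model manifold-with-corners case via Lemma \ref{lem:fiber-product-boundary-orientation}, then pass to general $\delta$-chains as in the proof of Lemma \ref{lem:canceled}. The case of $\CP^1$ endpoints adds nothing, since even dimensionality kills the extra signs.
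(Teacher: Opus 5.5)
Your proposal takes the same route as the paper. The paper proves this by citing Lin's characteristic-2 arguments (codimension-1 boundary analysis of the compactified cobordism moduli spaces, and chain homotopies from moduli spaces parametrized over an interval) and observing that the signs come out correctly because of where $w$ is placed, as in Proposition~\ref{prop:differentials}. One small bookkeeping slip in your face assignment: the composite $m_o^u w\,\dbar_u^s\,\partial_s^o$ comes from a type (ii) face, namely boundary-obstructed breaking on the $Y^-$ cylinder. Only $\partial_o^u(Y^+)\,w\,\mbar_u^s\,\partial_s^o(Y^-)$ comes from the corank-1 two-end faces of type (iii).
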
 \begin{proposition}\label{prop:u-maps}
    The induced maps \begin{equation*}
        m_*^{\circ} : HM^{\circ}(Y^-,\fs_{Y^-}) \otimes \Z[U_1] \otimes \cdots \otimes \Z[U_m] \longrightarrow HM^{\circ}(Y^+,\fs_{Y^+}) 
    \end{equation*}
    factor uniquely through the change-of-coefficients homomorphism \begin{equation*}
            HM^{\circ}(Y^-,\fs_{Y^-}) \otimes \Z[U_1] \otimes \cdots \otimes \Z[U_m] \longrightarrow  HM^{\circ}(Y^-,\fs_{Y^-}) \otimes \Z[U]
    \end{equation*}
    induced by \begin{equation*}
        U_1^{d_1} \otimes \cdots\otimes U_m^{d_m} \longmapsto U^{d_1+\cdots+d_m}.
    \end{equation*}
\end{proposition}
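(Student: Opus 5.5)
We prove Proposition \ref{prop:u-maps} by following the characteristic 2 argument of \cite[ch.~3,~Section~3]{f-lin}, keeping track of signs. The key fact to establish is that, at the level of homology, moving a puncture does not change the induced map. Consequently, the action of $U_i$ and $U_{i'}$ agree for all $i,i'$. Uniqueness of the factorization is automatic: the change-of-coefficients map $U_1^{d_1}\otimes\cdots\otimes U_m^{d_m}\mapsto U^{d_1+\cdots+d_m}$ is surjective. So the whole content is that $m^\circ_*$ kills the kernel of this map. That kernel is generated by the differences $x\otimes(\cdots U_i\cdots) - x\otimes(\cdots U_{i'}\cdots)$, i.e., elements of the form $(U_i - U_{i'})\cdot\xi$.

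\emph{Step 1: reduce to a statement about a single puncture.} I would first express the $U_i$-action in terms of the cobordism map. Let $W_{\vec p}$ have punctures $p_1,\ldots,p_m$. Gluing a copy of the map for $(S^3\times[0,1])$ with one extra puncture into the end $S_i^3$ realizes multiplication by $U_i$ on $\HMfrom(S_i^3)=\Z[U_i]$. Composition of cobordism maps then shows that $m^\circ_*(\,\cdot\otimes U_i\cdot\,)$ equals the map induced by $W$ with an additional puncture $q$ placed near $p_i$, with $U$ inserted at $q$. The composition law for cobordism maps is proved by the usual neck-stretching argument, using the parameterized moduli spaces of Example \ref{ex:cobordism}(v) and the orientation gluing of Subsection \ref{subsection:orienting-moduli-spaces}.

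\emph{Step 2: independence of the puncture location.} By Proposition \ref{prop:chain-maps}, the induced map does not depend on the placement of $\vec p$. Choose a path in $W$ from a point near $p_i$ to a point near $p_{i'}$, avoiding the other punctures. Consider the family of moduli spaces parameterized by $P=[0,1]$ in which the extra puncture moves along this path. The codimension-1 faces are of types (i)--(iv) together with the two endpoint strata of type (v). Counting them, with the boundary orientations of Definition \ref{def:boundary-orientation} and Lemma \ref{lem:fiber-product-boundary-orientation}, yields a chain homotopy $H$. This $H$ satisfies
\begin{equation*}
m(\cdot\otimes U\text{ at }q_0) - m(\cdot\otimes U\text{ at }q_1) = \partial H \pm H\partial .
\end{equation*}
This is formally identical to the homotopy used in Proposition \ref{prop:chain-maps}. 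Combining with Step 1 gives $m^\circ_*\circ U_i = m^\circ_*\circ U_{i'}$ on homology. Iterating over monomials shows that $m^\circ_*$ depends only on $d_1+\cdots+d_m$, which gives the factorization.

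\emph{Main obstacle.} The main difficulty is the signs in Step 1. Each puncture contributes an incoming $S^3$ end whose boundary-unstable critical points are ordered opposite to those on $-S_i^3$. Reordering the tensor factors $\Z[U_i]$ could therefore introduce signs $(-1)^{\deg\cdot\deg}$. Since $\deg U=-2$ is even, these Koszul signs vanish. I would verify this by checking that permuting two $S^3$ ends changes the homology orientation of $W_{\vec p}$ and the fiber-product orientation by the same sign, which is trivial in even degree. I would also check that the $\CP^1$ critical submanifolds, being even-dimensional, introduce no additional signs through the $w$-involution. This mirrors the argument used in Proposition \ref{prop:differentials}.
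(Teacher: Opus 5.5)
Your reduction is sound. Uniqueness follows from surjectivity of the change-of-coefficients map, and its kernel is spanned by the elements $(U_i-U_{i'})\xi$. Step~2 is just the placement-independence already contained in Proposition~\ref{prop:chain-maps}. Your overall strategy also matches the paper's, which defers to the characteristic-2 argument of \cite{f-lin} with signs added.

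The gap is Step~1: it carries all of the content, and you assert it rather than prove it. Placement-independence alone only makes $m^\circ_*$ invariant under permuting the inputs; it cannot relate $x\otimes U^2\otimes 1$ to $x\otimes U\otimes U$. For that you need to know how the once-punctured cylinder acts. Let $m_q\colon\HMfrom(S^3)\otimes\HMfrom(S^3)\to\HMfrom(S^3)$ be the map of $(S^3\times[0,1])_q$. Before this proposition, $\HMfrom(S^3)\cong\Z[U]$ is only an identification of graded groups: one $\Z$ per degree, each generator defined up to sign. The grading alone only gives $m_q(U^d\otimes U)=c_dU^{d+1}$ for some integers $c_d$.

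With unknown $c_d$, your Steps~1 and~2 yield only $c_{d_i}\,m^\circ_*(x\otimes U_i\mu)=c_{d_{i'}}\,m^\circ_*(x\otimes U_{i'}\mu)$, which does not give the factorization. Indeed the statement itself depends on the choice of generators: flipping the sign of $U^2$ changes $m^\circ_*(x\otimes U^2\otimes 1)$ but not $m^\circ_*(x\otimes U\otimes U)$. What you must show is $c_d=\pm1$ for all $d$, so that after renormalizing the generators $m_q(U^d\otimes U)=U^{d+1}$.

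This is a computation, not a consequence of the composition law. One route: with a positive scalar curvature metric on the punctured cylinder all solutions are reducible, and you count the reducible solutions connecting consecutive $\CP^1$ families. Another route: compare the puncture definition of $U$ with the cap-product definition of \cite{kronheimer-mrowka}, where the module structure of $\HMfrom(S^3)$ is known. With $\Z/2$ coefficients this is a nonvanishing statement. Over $\Z$, choosing the signs coherently is exactly where ``adding signs'' has real content, and your proposal does not address it.

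Your sign discussion also tracks the wrong parity. Reordering signs for the $S^3$ factors are governed by the degrees of the inputs $U^d\in\HMfrom(S^3)$, not by $\deg U$. In the canonical absolute grading these degrees are odd: $\HMfrom(S^3)$ is supported in degrees $-1,-3,\ldots$ in \cite{kronheimer-mrowka}. The actual reason no sign can appear is consistency. If relabeling two punctures cost a sign on these inputs, then exchanging $p_1$ and $p_2$ by an isotopy would give $m^\circ_*(x\otimes1\otimes1)=-m^\circ_*(x\otimes1\otimes1)$, whereas this should equal $W^\circ_*(x)$. So you have to verify that the orientation conventions at the $S^3$ ends are invariant under reordering; this does not follow from $\deg U$ being even.
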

\begin{proof}[Proofs of Propositions \ref{prop:chain-maps}, \ref{prop:u-maps}]
    The propositions follow by adding signs to the proofs in the characteristic 2 case; see \cite[ch.~3,~Proposition~3.2 and Lemma~3.4]{f-lin}. The appropriate signs emerge due to the placement of the involution $w$, as in the proof of Proposition \ref{prop:differentials}, which one may in turn compare to the proof of \cite[Proposition~2.4]{f-lin}.
\end{proof} \begin{remark}
    The proofs of Propositions \ref{prop:chain-maps} and \ref{prop:u-maps} use chain homotopies defined using moduli spaces parameterized over an interval. These can be made to allow for both $\Pin(2)$-non-degenerate and genuinely non-degenerate perturbations. In particular, the induced maps at the level of homology are the same as in \cite{kronheimer-mrowka}.
\end{remark}

Combining Propositions \ref{prop:chain-maps} and \ref{prop:u-maps}, for a fixed metric and perturbation on $Y$, we have well-defined maps \begin{equation*}
    HM^\circ(\, U^d \,|\, W, \fs_{W} \,) = m^\circ(-\otimes U^{d_1}\otimes \cdots \otimes U^{d_m})_* : HM^\circ(Y^-,\fs_{Y^-}) \to HM^\circ(Y^+,\fs_{Y^+}).
\end{equation*}
independent of the metric and perturbation on $W$ and of the number $m>0$ and placement of the punctures. We will be most interested in the case $d=0$. We write $W_*^\circ$ in place of $HM^\circ(\,1\,|\,W,\fs_W\,)$. In particular, we write $W_*$ for the induced map on $HM(Y,\fs_Y)$. These maps are functorial with respect to composition of cobordisms. \begin{proposition}
    If $(W,\fs_W)=I\times (Y,\fs_Y,g,\fq)$ is the trivial cobordism, then $W^\circ$ is the identity map. If $W = W_1\circ W_2$ is a composite cobordism, then \begin{equation*}
        HM^{\circ}(\, U^{d_1+d_2} \,|\, W,\fs_W \,) = HM^{\circ}(\, U^{d_1} \,|\, W_1,\fs_{W_1} \,)\circ HM^{\circ}(\, U^{d_2} \,|\, W_2,\fs_{W_2} \,)
    \end{equation*}
    where $\fs_{W_1},\fs_{W_2}$ are the restricted $\spin$ structures.
\end{proposition}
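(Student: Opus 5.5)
The plan is to prove both statements, the identity map for the trivial cylinder and composition for glued cobordisms, at the chain level up to chain homotopy, following the structure of \cite[ch.~3]{f-lin} and \cite[Sections~25--26]{kronheimer-mrowka}. The signs will be handled as in Propositions \ref{prop:differentials} and \ref{prop:chain-maps}.

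For the trivial cobordism, I take the cylinder $I\times Y$ with the product metric and the translation-invariant perturbation $\fq$, and a single puncture $p$. By Proposition \ref{prop:u-maps}, the $U^0$ part only needs the top generator of $\HMfrom(S^3)=\Z[U]$, which is the fundamental class of the $\CP^1$ critical manifold over the reducible on $S^3$. The first step is to stretch the neck around the puncture, using the independence from Proposition \ref{prop:chain-maps}. In the limit the moduli spaces become fiber products of
\begin{itemize}
\item translation-invariant trajectories on $\R\times Y$, and
\item the moduli space on the punctured ball $B^4\setminus B^4$ with the $S^3$ end.
\end{itemize}
The count on $B^4$ evaluated against the top class is the unit; this is the standard computation that $HM(B^4)$ gives the generator, and the sign is fixed by the canonical homology orientation. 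The only unbroken solutions on the cylinder are the constant trajectories, so every $\mu$ with $[\fC^-]\neq[\fC^+]$ has negative expected dimension or vanishes. When $[\fC^-]=[\fC^+]$, the moduli space is a copy of $[\fC]$ and the evaluation map $\ev^-\times\ev^+$ is the diagonal. The fiber product with a chain $\sigma$ therefore returns $\pm\sigma$, and I must check the sign is $+$ after composing with $w$. This uses the fiber-product orientation convention together with Lemma \ref{lem:fiber-product-boundary-orientation}, noting that $\dim[\fC]$ is even, either $0$ or $2$. Hence $\mbar,\mcheck,\mhat$ are the identity matrices plus terms that vanish, and the off-diagonal corrections like $\partial^u_o w\,\mbar^s_u$ vanish because $\mbar^s_u=0$.

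For composition $W=W_1\circ W_2$, I place $d_1$ punctures in $W_1$ and $d_2$ in $W_2$. I then consider the family of metrics on $W$ that stretches the neck along the middle $Y$, parameterized by $T\in[0,\infty]$. The parameterized moduli spaces over $P=[0,\infty)$, compactified at $T=\infty$ by the fiber products of the $W_1$ and $W_2$ moduli spaces over critical submanifolds of the middle $Y$, define a chain homotopy $K$. Its boundary has three parts:
\begin{itemize}
\item the face at $T=0$, which gives $m(W)$;
\item the face at $T=\infty$, which gives the composite $m(W_1)\circ m(W_2)$ with the middle reducible corrections that appear in the matrix formulas for $\mcheck,\mhat$;
\item the ends, which give $\partial K\pm K\partial$.
\end{itemize}
This is type (v) in Example \ref{ex:cobordism}, together with the gluing faces. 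The key inputs are that the $T=\infty$ faces are regular fiber products over the Morse--Bott critical manifolds of the middle $Y$, and that transversality of chains can be arranged, using the subcomplex $C^{\delta;\cF}$ and a quasi-isomorphism argument. The $U$-power bookkeeping then follows from Proposition \ref{prop:u-maps}, since the punctures split as $d_1+d_2$.

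The main obstacle is the sign and orientation bookkeeping at the $T=\infty$ face. There the product orientation of $\cM^+(W_1)\times_{[\fC]}\cM^+(W_2)$, built using the composition of homology orientations and the rule \eqref{eq:orientation-composition-law}, has to be compared with the boundary orientation of the parameterized moduli space. This includes the boundary-obstructed strata where the middle critical point is reducible, which produce the mixed terms such as $m_o^u w\,\dbar_u^s$. I would first do the case where all middle critical points are non-degenerate, reducing to \cite[Proposition~26.1.2]{kronheimer-mrowka}. I would then handle the $\CP^1$ families by observing that they are canonically oriented, of even dimension and simply connected (Proposition \ref{prop:moduli-spaces-orientable}). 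Because of this, they contribute no extra signs beyond those produced by $w$.
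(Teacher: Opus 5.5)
Your overall route matches the paper's: its proof only says to add signs to Lin's characteristic-$2$ arguments, and those are exactly constant trajectories on a translation-invariant cylinder plus a neck-stretching family along the middle copy of $Y$. The gap is in the sign step you single out. With the convention $m^x_y=\sum\mu\circ w$, $\bar m^x_y=\sum\bar\mu\circ w$, that check cannot succeed.

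Take a $\CP^1$ family $[\fC]$ on the translation-invariant cylinder. The constant trajectories form a closed copy of $[\fC]$ with $\ev^-=\ev^+=\mathrm{id}$. Because $\dim[\fC]$ is even, $\sigma\times_{[\fC]}[\fC]=\epsilon_{[\fC]}\,\sigma$ with a sign $\epsilon_{[\fC]}$ that does not depend on $\dim\sigma$; no orientation convention over an even-dimensional base can produce a $\dim\sigma$-dependent sign here. So the evenness you invoke is exactly what forces $\bar m^s_s=\epsilon_{[\fC]}\,w$, which is $-\epsilon_{[\fC]}$ on $1$-chains. This is neither the identity nor a chain map, because $w\partial=-\partial w$ while the diagonal of $\dbar$ is $-\partial$. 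Concretely, for a nice perturbation and a $1$-chain $\gamma$ with $\partial\gamma=q-p$, one gets $\dbar(\bar m^s_s\gamma)=\epsilon_{[\fC]}(q-p)$ but $\bar m^s_s(\dbar\gamma)=-\epsilon_{[\fC]}(q-p)$.

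The same parity defect breaks the composition step. Since $w\mu w=\pm\mu$ with a sign depending only on the moduli space, $m(W_1)\circ m(W_2)=\mu_1 w\mu_2 w=\pm\mu_1\mu_2$ contains no net $w$. The $T=\infty$ face, by contrast, relates $m(W)=\mu_W\circ w$ to $\pm\mu_1\mu_2\circ w$. The two sides therefore differ by $w$ on odd-dimensional chains in the reducible towers, so the $\CP^1$ families do produce extra signs, precisely through $w$.

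To repair the argument you have to change the convention before checking anything. For example, build the cobordism maps from $\mu$ rather than $\mu\circ w$, so that their leading term commutes with the internal boundary. Then redo the face-sign bookkeeping in that convention: the chain-map property, correction terms like $\partial^u_o w\,\bar m^s_u$, and the $T=\infty$ face. That is the real content behind ``adding signs'', and neither your sketch nor the paper's one-line proof supplies it.

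A smaller point concerns the puncture. Stretching the punctured cylinder around its puncture does not produce translation-invariant trajectories on $\R\times Y$. What relates the punctured and unpunctured maps is the degeneration of the unpunctured cylinder along a small $S^3$ into the punctured cylinder glued to $B^4$. That is itself a gluing argument of the kind in your second half, so it is cleaner to start from the puncture-free definition of $W^\circ_*$ that the paper mentions.
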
\begin{proof}
    The proposition follows from adding signs to the proofs in the characteristic 2 case; see \cite[Propositions~3.6 and 3.6]{f-lin}.
\end{proof}

\begin{remark}
    While we needed to introduce punctures in order to define the maps $HM^\circ(\, U^d \,|\, W,\fs_W \,)$ for $d>0$, we could have defined $W*^\circ$ without introducing any punctures. We will make use of this observation later.
\end{remark}

Using the fact that the cobordism maps are independent of the metric and perturbation on the cobordism, we can finally prove that the monopole Floer homology groups are invariants of $(Y,\fs_Y)$, independent of the metric and perturbation on $Y$. \begin{proposition}
    The monopole Floer homology groups $\HMbar(Y,\fs_Y),\HMto(Y,\fs_Y),\HMfrom(Y,\fs_Y)$ for different choices of metric and perturbation are canonically isomorphic. When $\fs_Y$ is induced by a $\spin$ structure, the involution $\jmath$ does not depend on the choice of regular $\Pin(2)$-non-degenerate perturbation.
\end{proposition}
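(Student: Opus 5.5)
We follow the standard continuation-map argument of \cite[\textsection~25]{kronheimer-mrowka}, adapted to the Morse--Bott $\delta$-chain setting of \cite{f-lin}. Fix two choices $(g_0,\fq_0)$ and $(g_1,\fq_1)$ of metric and regular perturbation on $Y$, each either genuinely non-degenerate or $\Pin(2)$-non-degenerate. Let $W_{01}$ be the product cobordism $[0,1]\times Y$ with a metric interpolating from $g_0$ to $g_1$ and a perturbation interpolating from $\fq_0$ to $\fq_1$, extended regularly over the interior. Let $W_{10}$ be the reverse. Propositions \ref{prop:chain-maps} and \ref{prop:u-maps} then give maps $HM^\circ(W_{01})_*$ and $HM^\circ(W_{10})_*$ between the corresponding Floer groups. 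We take these, with no punctures, as the candidate canonical isomorphisms.

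The key steps, in order, are as follows. First, by functoriality, $HM^\circ(W_{10})_*\circ HM^\circ(W_{01})_*$ equals the map induced by the composite cobordism $W_{10}\circ W_{01}$. This composite is again a cylinder $[0,2]\times Y$, carrying a metric and perturbation that agree with $(g_0,\fq_0)$ near both ends. Second, by the independence statement of Proposition \ref{prop:chain-maps}, whose proof uses a chain homotopy from moduli spaces parameterized over an interval of metrics and perturbations, this map equals the map induced by the trivial cobordism with the constant data $(g_0,\fq_0)$. That map is the identity. The same argument applies in the other order, so the two maps are mutually inverse isomorphisms. Third, canonicity: any two interpolating paths are connected by a one-parameter family with fixed ends, so the parameterized-moduli chain homotopy again shows the induced map on homology is independent of the path. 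Compatibility of these isomorphisms for three choices follows from functoriality. Naturality of the long exact sequence \eqref{eq:long-exact-sequence} follows because $i,j,p$ commute with $\mbar,\mcheck,\mhat$ up to homotopy, exactly as in \cite{kronheimer-mrowka}.

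For the statement about $\jmath$, take $\fq_0,\fq_1$ both $\Pin(2)$-non-degenerate, and choose the interpolating perturbation on $W_{01}$ to be $\Pin(2)$-equivariant. The action of $j$ then maps each moduli space on $W_{01}$ to the one with $\jmath$-translated endpoints. The orientation bookkeeping is the same as in the proof of Proposition \ref{prop:j-on-homology}, using that $\fs_W$ is spin, so $\bar\fs_W\cong\fs_W$ and the homology orientation is preserved. It follows that $\jmath\circ\mhat=\mhat\circ\jmath$ at chain level, and likewise for the other flavors. Hence the canonical isomorphism intertwines the two involutions $\jmath_*$.

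The main obstacle is this last step: regularity of $\Pin(2)$-equivariant perturbations over the cobordism. Generic equivariant perturbations on $W$ need not make the reducible moduli spaces regular, because the reducible critical sets are $\CP^1$ families. I would first try to invoke the cobordism analogue of \cite[ch.~4, Theorem~2.6 and Proposition~2.12]{f-lin}, which provides regular equivariant perturbations on cylinders. If that fails on the reducible locus, the fallback is as follows. Since the reducible locus is $\jmath$-invariant and its transversality reduces to a spectral flow condition on the perturbed Dirac operator, the remaining non-regularity can be avoided by a generic equivariant choice along the path. This uses the fact that the quaternionic structure makes eigenvalue crossings of even real multiplicity occur only in codimension three, so they are avoided along a generic path.
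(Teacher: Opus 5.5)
Your proposal is correct and follows essentially the same route as the paper, which cites the characteristic-2 argument of \cite[Corollary~3.7]{f-lin}. The cylinder with interpolating metric and perturbation induces the canonical isomorphism, and choosing the interpolating path through $j$-equivariant perturbations makes that isomorphism $\jmath$-equivariant. The regularity concern you raise is what the paper leaves to Lin's equivariant transversality results \cite[ch.~4, Theorem~2.6 and Proposition~2.12]{f-lin}, so your fallback is unnecessary; its count is also off, since for quaternionic-linear self-adjoint operators a quaternionic eigenvalue of multiplicity two has codimension five rather than three, though any codimension at least two suffices for a generic path.
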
 \begin{proof}
    The claim follows from the same argument used in the characteristic 2 case; see \cite[Corollary~3.7]{f-lin}. The main point is that a path interpolating between two different choices of metric and pertubration on $Y$ gives a metric and perturbation on the cylinder, and the induced map gives a canonical isomorphism between the two monopole Floer homology groups. When the perturbations are both $\Pin(2)$-non-degenerate, we can choose a path interpolating through $j$-equivariant perturbations, so that the canonical isomorphism is $\jmath$-equivariant.
\end{proof}

}


\section{Solution Counts and Lefschetz Numbers}\label{section:solution-counts}

\noindent In this section, we return to the situation of Theorem \ref{thm:deg-formula}. Throughout, $\tildeX$ is the double-cover of a homology $S^1\times S^3$ with real $\spinc$ structure $\fs_{\tildeX}$. The real involution $I$ is defined by combining the action of $j$ coming from the even-type $\spin$ structure with the gauge transformation $u$, as in Subsection \ref{subsection:real-spin-structures}. The hypersurface $Y$ is a rational homology sphere.  Both $Y$ and $W$ are given the $\spin$ structures restricted from the even-type $\spin$ structure on $\tildeX$. These $\spin$ structures induce $\spinc$ structures $\fs_Y$ and $\fs_W$, respectively. 

\subsection{Perturbations on Rational Homology Spheres}\label{subsection:perturbations-on-rational-homology-spheres}{

Since we are working with a rational homology sphere, we can restrict the class of perturbations that we use, so as to simplify the monopole Floer complex. 

A perturbation $\fq$ over $Y$ is \emph{nice} if it is regular in the appropriate sense and vanishes identically along the reducible locus. The proof of \cite[Proposition~2.8]{j-lin} can be adapted to our setting to establish the existence of nice $\Pin(2)$-non-degenerate perturbations. We will assume such a perturbation has been fixed. For a nice perturbation, dimensional constraints force the operators $\partial_y^x$ to vanish for $x,y\in\{u,s\}$. Therefore, the differentials in Definition \ref{def:monopole-chain-complexes} take the simplified forms \begin{equation*}
    \dbar = \begin{bmatrix}
        -\partial & 0 \\
        0 & -\partial 
    \end{bmatrix},\quad \dcheck = \begin{bmatrix}
        \partial_{  o}^{  o} & 0 \\
        \partial_{  s}^{  o} & -\partial
    \end{bmatrix},\quad \dhat = \begin{bmatrix}
        \partial_{  o}^{  o} & \partial_{  o}^{  u} \\
        0 & -\partial
    \end{bmatrix}
\end{equation*}
where $\partial$ is the internal boundary operator on $\delta$-chains. The (anti-)chain maps $i,j,p$ also simplify to \begin{equation*}
    i = \begin{bmatrix}
        0 & -\partial_{  o}^{  u} \\
        1 & 0
    \end{bmatrix},\quad j = \begin{bmatrix}
        1 & 0 \\
        0 & 0
    \end{bmatrix},\quad p = \begin{bmatrix}
        \partial_{  s}^{  o} & 0 \\
        0 & 1
    \end{bmatrix}.
\end{equation*}
For a nice perturbation, the reducibles upstairs belong to a single tower of $\CP^1$ families lying over the same reducible downstairs.

The Morse-Bott comples associated to a genuinely non-degenerate perturbation is the same as the ordinary monopole Floer complex. The Morse-Bott complex associated to a $\Pin(2)$-non-degenerate perturbation is more unwieldly due to its being non-finitely-generated in each grading. To deal with this issue, we would like to pass from $\delta$-chains to the homology of the critical submanifolds. The simplified form of the complex in the presence of a nice perturbation gives us one way to achieve locally-finitely-generatedness. \begin{proposition}\label{prop:grading-filtration}
    Suppose $Y$ is a rational homology 3-sphere, $\fs_Y$ is induced by a $spin$ structure on $Y$, and $\fq$ is a nice regular $\Pin(2)$ non-degenerate perturbation. Then, there exists a $\Z,\bJ$-bigraded spectral sequence $\widecheck{E}$ converging to $\HMto(Y,\fs_Y)$ such that the $E^0$ page is \begin{equation*}
        \widecheck E^0 = \CMto(Y,\fs_Y;\fq)
    \end{equation*}
    and the $E^1$ page is \begin{equation*}
        \widecheck E_{ij}^1 = \bigoplus_{\underset{\operatorname{Gr}([\fC]) = j}{[\fC]\in \Cr^o\sqcup\Cr^s}} H_i([\fC];\Z_{[\fC]}).
    \end{equation*}
\end{proposition}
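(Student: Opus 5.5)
The natural approach is to filter $\CMto(Y,\fs_Y;\fq)$ by the absolute grading $\Gr$ of the critical submanifolds and take the associated spectral sequence. For a nice perturbation, the differential has the simplified form
\begin{equation*}
    \dcheck = \begin{bmatrix}
        \partial_{o}^{o} & 0 \\
        \partial_{s}^{o} & -\partial
    \end{bmatrix}.
\end{equation*}
Each component $\epsilon([\fC^-],[\fC^+])\circ w$ is a fiber product with $\ucM^+([\fC^-],[\fC^+])$. This is nonempty only when $[\fC^+]$ has strictly smaller $\Gr$-grading than $[\fC^-]$: the unparameterized moduli space has dimension $\gr([\fC^-],[\fC^+]) - 1 + \dim[\fC^-]$ (or the analogous formula in the boundary-obstructed case, which does not occur here because $\partial_y^x=0$ for $x,y\in\{s,u\}$). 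It must be nonnegative, and when the grading difference is zero the only contributions come from constant trajectories, which are excluded. So I define the increasing filtration
\begin{equation*}
    F_p\CMto = \bigoplus_{\Gr([\fC])\le p} C^{\delta;\cF}_*([\fC];\Z_{[\fC]}).
\end{equation*}
This is a subcomplex, because every term of $\dcheck$ either preserves the critical submanifold (the internal term $-\partial$, together with the internal part of $\partial_o^o$ on irreducibles) or strictly lowers $\Gr$.

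The first step is to check that the associated graded differential is exactly the internal boundary $-\partial$ on each summand $C^{\delta;\cF}_*([\fC];\Z_{[\fC]})$. This requires a convention for $\bJ$ so that, after fixing a base point, $\Gr$ gives an honest $\Z$-filtration. Since $c_1(\fs_Y)$ is torsion, $\bJ$ is a free transitive $\Z$-set, so this is fine. The $E^0$ page is then $\CMto$ itself with $d^0=-\partial$ on each summand. By Proposition \ref{prop:delta-chain-homology}, together with the statement that restricting to chains transverse to $\cF$ is a quasi-isomorphism, we get $E^1_{ij}=\bigoplus_{\Gr([\fC])=j}H_i([\fC];\Z_{[\fC]})$. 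The coefficient system is harmless: $\Z_{[\fC]}$ is a trivial rank-one system, since $\Lambda$ is constant over the simply connected $[\fC]$ (Proposition \ref{prop:moduli-spaces-orientable}). The sign $-1$ does not change homology.

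The main obstacle is convergence. The filtration is neither bounded nor finite. The boundary-stable reducibles form an infinite tower of $\CP^1$'s going down in grading, and each $C^\delta_*$ is huge. I would argue as follows.
\begin{itemize}
\item In each fixed total degree, the filtration is bounded above, because there are finitely many irreducibles and the reducible tower is bounded above in $\Gr$ for $\HMto$.
\item It is exhaustive and Hausdorff.
\item Below the lowest irreducible, $\dcheck$ restricted to the tower is just $-\partial$ with no cross terms, so the complex splits there as a direct sum.
\end{itemize}
Hence, for each total degree, only finitely many filtration levels can support nonzero differentials $d^r$ into or out of that degree, and the spectral sequence degenerates at a finite page in each degree. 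It therefore converges strongly to $\HMto(Y,\fs_Y)$ by the standard bounded-below/exhaustive comparison theorem applied degreewise. I expect the careful bookkeeping of this degreewise finiteness, together with the identification of the bigrading (chain dimension $i$ versus $\Gr$ $j$, with total degree $i+j$), to be the fiddliest part.
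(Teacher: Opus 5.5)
Your construction is the paper's. You filter $\CMto$ by $\Gr$, note that for a nice perturbation the only off-diagonal entries of $\dcheck$ are $\partial^o_o$ and $\partial^o_s$, and take the spectral sequence of that filtration; the paper then simply cites F.~Lin (Ch.~3, Prop.~2.9) for convergence.

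One correction to the key step. The strict drop in grading holds because every off-diagonal term has an \emph{irreducible} source. Hence $\dim[\fC^-]=0$ and $\gr([\fC^-],[\fC^+])=1+\dim\ucM^+([\fC^-],[\fC^+])\ge 1$. Your general argument ("nonnegative dimension, and constant trajectories are excluded") does not give this for a $\CP^1$ source, where nonnegativity only yields $\gr\ge -1$. What removes such terms is niceness, not the constant-trajectory remark.

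The convergence paragraph, which you single out as the main obstacle, rests on an inverted picture. Boundary-stable $\CP^1$'s sit \emph{above} the boundary-unstable ones; this is exactly what Lemma~\ref{lem:fiber-products-zero} uses. So the stable tower in $\CMto$ goes \emph{up} in $\Gr$, not down. This is also why the proof of Theorem~\ref{thm:deg-formula} can truncate to the finitely generated subcomplex $\CMto_{\le N}$. Consequently:
\begin{itemize}
\item your premise that the tower is bounded above in $\Gr$ for $\HMto$ is false;
\item the splitting below the lowest irreducible is beside the point;
\item the bounded-below hypothesis you invoke at the end is never actually established.
\end{itemize}
With the correct picture the difficulty disappears. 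There are finitely many irreducibles and the stable tower is bounded below, so $F_p\CMto=0$ for $p\ll 0$. The filtration is also exhaustive, so the classical convergence theorem for bounded-below exhaustive filtrations applies directly.

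In fact the filtration is finite in each total degree. Since $\delta$-chains of dimension $>\dim M+1$ are degenerate, a critical submanifold with $\Gr=j$ contributes only in total degrees $j,\dots,j+3$. Moreover, $E^1$ is concentrated in $i\in\{0,2\}$, so only $d^1$ and $d^3$ can be nonzero. None of the degreewise bookkeeping you anticipated is needed.
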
\begin{proof}
    Since we are working with a nice perturbation, contributions to the differential all come from moduli spaces of unbroken trajectories, and there are no contributions coming from trajectories between reducibles. We claim that if the moduli space $\ucM_z^+([\fC^-],[\fC^+])$ contributes to the differential $\dcheck$, then $\gr_z([\fC^-],[\fC^+]) \geq 1.$ Once we prove this claim, the lemma will follow according the same argument used to prove \cite[ch.~3,~Proposition~2.9]{f-lin}.
    
    Since we are working with the ``to" version of the complex and there are no contributions from trajectories between reducibles, $[\fC^-]$ is irreducible. The relative grading is related to the dimension of the moduli space by \begin{equation*}
        \gr_z([\fC^-],[\fC^+]) = 1 + \dim\ucM_z^+([\fC^-],[\fC^+]) - \dim[\fC^-] = 1 + \dim\ucM_z^+([\fC^-],[\fC^+]) \geq 1.
    \end{equation*}
    It follows that the complex $\CMto$ is a $\bJ$-filtered complex where the filtration is that coming from the grading $\Gr$.  The spectral sequence $\widecheck E$ is the spectral sequence associated to this filtration.
\end{proof} \begin{remark}
    In order to make sense of the filtration in the proof of Proposition \ref{prop:grading-filtration}, we are implicitly using the fact that, since $c_1(\fs_Y)$ is torsion, the action of $\Z$ on the grading set $\bJ(\fs_Y)$ is free. 
\end{remark}

Proposition \ref{prop:grading-filtration} gives a locally-finitely-generated approximation of the complex $\CMto$. We will also need an approximation of the chain map induced on $\CMto$ by a cobordism. Since the spectral sequence in Proposition \ref{prop:grading-filtration} is that associated to the filtration by the grading $\Gr$, we need to understand how gradings play with moduli spaces on a cobordism. We focus on the case of a self-cobordism, so as to avoid discussing canonical gradings. Suppose $(W,\fs_W)$ is a $\spin$ cobordism from $(Y,\fs_Y)$ to itself, where $c_1(\fs_Y)$ is torsion. Let \begin{equation*}
    d(W,\fs_W) = \frac{c_1(\fs_W)^2 - 2\chi(W) - 3\sigma(W)}4.
\end{equation*}
Then, when the moduli space $\cM([\fC^-],W_\infty,\fs_W,[\fC^+])$ is non-empty, the grading difference between critical points over $Y$ is related to the dimension of the moduli space over $W$ by \begin{equation*}
    \Gr[\fC^-] - \Gr[\fC^+] + d(W,\fs_W) = \dim\cM^+([\fC^-],W_{\infty}, \fs_W[\fC^+]) - \dim[\fC^-]
\end{equation*}
in the boundary-unobstructed case and \begin{equation*}
    \Gr[\fC^-] - \Gr[\fC^+] + d(W,\fs_W) = \dim\cM^+([\fC^-],W_\infty,\fs_W,[\fC^+]) - \dim[\fC^-] - 1
\end{equation*}
in the boundary-obstructed case. In the case that $W$ is the cobordism obtained by cutting a homology $S^1\times S^3$ along a rational homology 3-sphere $Y$ with $[Y]=1\in H^3(X;\Z)\cong \Z$, we have $d(W,\fs_W)=0$.

For a nice perturbation and a self-cobordism of $Y$ with no punctures, we can write the chain map $\Wcheck_{*}$ in matrix form as \begin{equation*}
    \Wcheck_{*} = \begin{bmatrix}
            W_o^o & -\partial_o^u\,w \Wbar_u^s \\
            W_s^o & \Wbar_s^s
        \end{bmatrix}.
\end{equation*} \begin{lemma}\label{lem:fiber-products-zero}
    Suppose $d(W,\fs_W)=0$. Then, the operator $-\partial_o^u\,w \Wbar_u^s$ is identically zero.
\end{lemma}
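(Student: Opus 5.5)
We argue by dimension counting: we show that every fiber product contributing to $-\partial_o^u\,w\Wbar_u^s$ is a $\delta$-chain of too large dimension in a $0$-dimensional critical manifold, hence degenerate and zero in $C^\delta_*$. Take a boundary-stable critical submanifold $[\fC^-]\in\Cr^s(Y)$, a boundary-unstable $[\fB]\in\Cr^u(Y)$ and an irreducible $[\fC^+]\in\Cr^o(Y)$. Let $\sigma$ be a $\delta$-chain in $[\fC^-]$. The term to analyze is
\begin{equation*}
    \left(\sigma\times_{[\fC^-]}\cM^{red+}([\fC^-],W_\infty,[\fB])\right)\times_{[\fB]}\ucM^+([\fB],[\fC^+]).
\end{equation*}
Since $[\fC^+]$ is irreducible, it is a point: it is either a genuinely non-degenerate critical point or lies in a $\CP^1$ family only if reducible. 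So the output is a $\delta$-chain in a point. By Lemma \ref{lem:degenerate-1-chains} and the remark following it, only $0$-dimensional chains survive, so it suffices to show every such fiber product has positive dimension whenever it is nonempty.

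Compute the dimension of each factor from the grading formulas. For the reducible moduli space over $W$ from boundary-stable to boundary-unstable, the full moduli space is boundary-obstructed, so the formula with $d(W,\fs_W)=0$ gives
\begin{equation*}
    \dim\cM^{red+}([\fC^-],W_\infty,[\fB])-\dim[\fC^-]=\Gr[\fC^-]-\Gr[\fB]+1,
\end{equation*}
because the reducible moduli space is the boundary of the full moduli space and so has one dimension less. For the trajectory space $\ucM^+([\fB],[\fC^+])$ with $[\fC^+]$ irreducible, the identity from the proof of Proposition \ref{prop:grading-filtration} gives $\dim\ucM^+([\fB],[\fC^+])-\dim[\fB]=\Gr[\fB]-\Gr[\fC^+]-1$. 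Adding these, the fiber product has dimension
\begin{equation*}
    \dim\sigma+\Gr[\fC^-]-\Gr[\fC^+].
\end{equation*}

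The remaining task is to show that $\Gr[\fC^-]-\Gr[\fC^+]\ge 1$ whenever both moduli spaces are nonempty, or that they are otherwise empty. The key input is niceness. Under a nice perturbation the reducibles all lie in one tower of $\CP^1$ families over a single reducible. The boundary-stable critical points sit in gradings above the boundary-unstable ones, with a gap of at least $1$ between the lowest stable and the highest unstable. Reducible trajectories over $W$ preserve the reducible locus, and $d=0$ forces $\Gr[\fB]\le\Gr[\fC^-]-1$, up to the dimension of the $\CP^1$ families. An irreducible $[\fC^+]$ reached from $[\fB]$ by a nonempty trajectory space satisfies $\Gr[\fC^+]\le\Gr[\fB]-1$. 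Hence $\Gr[\fC^-]-\Gr[\fC^+]\ge 2$, and with $\dim\sigma\ge0$ the fiber product has dimension at least $1$. Alternatively, we bound the dimension directly using $\dim\ucM^+([\fB],[\fC^+])\ge0$ and $\dim\cM^{red+}\ge0$ on each nonempty piece.

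The main obstacle is bookkeeping the Morse--Bott dimension contributions of the $\CP^1$ families: the fiber products over $[\fC^-]$ and $[\fB]$ subtract $\dim[\fC^-]$ and $\dim[\fB]$, which can be $2$. One must check that the correct grading conventions for reducibles in the blown-up Morse--Bott setting make the total strictly positive, rather than zero. I would verify this first in the model of the tower of $\CP^1$'s for $S^3$.
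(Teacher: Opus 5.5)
Your strategy is the same dimension-plus-grading argument the paper uses: compute the dimension of each contributing fiber product from the grading formulas, show it is positive, and conclude that the output is a degenerate chain in the point $[\fC^+]$. Your total $\dim\sigma+\Gr[\fC^-]-\Gr[\fC^+]$ is also correct. The gap is in the two inequalities that are supposed to make this total positive.

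\emph{The bound at $[\fB]$.} A nonempty $\ucM^+([\fB],[\fC^+])$ does \emph{not} force $\Gr[\fC^+]\le\Gr[\fB]-1$. For a $\Pin(2)$-non-degenerate perturbation every reducible is a $\CP^1$ family, so $\dim[\fB]=2$. Your own identity $\dim\ucM^+([\fB],[\fC^+])-\dim[\fB]=\Gr[\fB]-\Gr[\fC^+]-1$ then only gives $\Gr[\fC^+]\le\Gr[\fB]+1$, with equality when $\dim\ucM^+([\fB],[\fC^+])=0$.

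\emph{The stable--unstable gap.} This gap cannot come from $d(W,\fs_W)=0$ plus nonemptiness, which only yield $\Gr[\fC^-]-\Gr[\fB]\ge -3$. A gap of merely $1$ would not suffice anyway. Take $\sigma$ a point, $\Gr[\fC^-]-\Gr[\fB]=1$ (so $\dim\cM^+([\fC^-],W_\infty,\fs_W,[\fB])=4$), and $\dim\ucM^+([\fB],[\fC^+])=0$. The total dimension is then $0+4-2+0-2=0$. Your fallback, using only nonnegativity of the two moduli dimensions, gives just $\ge\dim\sigma-4$. So the argument rests entirely on the bookkeeping you deferred to the end, and as written it does not go through.

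\emph{What is missing} is the actual shape of the reducible tower for a nice $\Pin(2)$-non-degenerate perturbation. Each $\CP^1$ family is the projectivization of a quaternionic eigenline $E_k\cong\mathbb{C}^2$ of the Dirac operator at the unique reducible. Consecutive families differ by $4$ in $\Gr$, and the lowest boundary-stable family lies exactly $3$ above the highest boundary-unstable one. You can check this on the product cylinder: the reducible trajectories between these two families form $(E_0\setminus 0)\times(E_{-1}\setminus 0)/\mathbb{C}^*$, a $6$-dimensional space. The boundary-obstructed grading formula then gives a difference of $6-2-1=3$. Hence $\Gr[\fC^-]-\Gr[\fB]\ge 3$. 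Combined with the corrected bound $\Gr[\fC^+]\le\Gr[\fB]+1$, your count gives $\dim\ge\dim\sigma+2\ge 2$, so every contributing chain is degenerate and the operator vanishes.

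One smaller point concerns your justification of the formula over $W$. From a boundary-stable to a boundary-unstable critical point, the moduli space over $W$ consists entirely of reducibles. Your ``$+1$'' formula is therefore just the boundary-obstructed formula for the whole space. Your stated reason (the reducible locus is a boundary of one lower dimension) describes the unstable-to-stable case and would give $+0$ instead.
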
 \begin{proof}
    The operator $-\partial_o^u\,w \Wbar_u^s$ counts contributions from fiber products of moduli spaces of the form \begin{equation*}
        \dim \cM^+([\fC^-],W_\infty,\fs_W,[\fA]) \times_{[\fA]} \ucM^+([\fA],[\fC^+])
    \end{equation*}
    where $[\fC^-],[\fA],[\fC^+]$ are boundary-stable, boundary-unstable, and irreducible, respectively. Suppose, aiming for a contradiction, that such a fiber product has a non-zero contribution. Since positive-dimensional $\delta$-chains in $[\fC^+]$ are degenerate, we must have \begin{equation*}
        \begin{split}
            0 &= \dim \cM^+([\fC^-],W_\infty,\fs_W,[\fA]) \times_{[\fA]} \ucM^+([\fA],[\fC^+]) \\
            &= \dim \cM^+([\fC^-],W_\infty,\fs_W,[\fA]) + \dim \ucM^+([\fA],[\fC^+]) - 2. \\
        \end{split}
    \end{equation*}
    We must also have $\dim \ucM^+([\fA],[\fC^+])=0$, so $\dim \cM^+([\fC^-],W_\infty,\fs_W,[\fA])=2$. The moduli space $\cM^+([\fC^-],W_\infty,\fs_W,[\fA])$ is boundary-obstructed. In the boundary-obstructed case (with $d(W,\fs_W)=0$), the grading difference is related to the dimension of the moduli space by \begin{equation*}
        \Gr[\fC^-] - \Gr[\fA] = \dim \cM^+([\fC^-],W_\infty,\fs_W,[\fA]) - \dim [\fC^-] - 1 = -1.
    \end{equation*}
    This is not possible; the grading difference between a boundary-stable $\CP^1$ and a boundary-unstable $\CP^1$ lying over the same reducible downstairs must be positive. Therefore, the contribution from any such fiber product is zero.
\end{proof}
\begin{lemma}\label{lem:graded-chain-map}
    Suppose that $Y,\fs_Y,\fq$ are as in Proposition \ref{prop:grading-filtration} and that $(W,\fs_W):(Y,\fs_Y)\to (Y,\fs_Y)$ is a $spin$ cobordism such that $H_*(W;\Q)\cong H_*(S^3;\Q)$. Then, for a compatible perturbation over $W$, the chain map $\Wcheck_{*}$ induces a natural family of maps $\widecheck E_{*,*}^k \to \widecheck E_{*,*}^k$ converging to the induced map on $\HMto(Y,\fs_Y)$.
\end{lemma}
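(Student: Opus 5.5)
The plan is to show that $\Wcheck_*$ preserves the $\Gr$-filtration used in Proposition \ref{prop:grading-filtration}, i.e.\ it is a filtered chain map of filtration degree $\le 0$. Standard spectral sequence machinery then gives induced maps on every page $\widecheck E^k$, compatible with the differentials $d^k$, and converging to the map on $H_*(\CMto)=\HMto(Y,\fs_Y)$. Naturality means the construction is independent of choices up to the usual identifications. This follows because a filtered chain homotopy between filtered maps induces equal maps from $E^1$ on. The chain homotopies in Proposition \ref{prop:chain-maps} come from parametrized moduli spaces, and they can be treated by the same grading count, which gains one from the extra parameter.

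The key step is the filtration estimate. First note that $H_*(W;\Q)\cong H_*(S^3;\Q)$ is used to show $d(W,\fs_W)=0$. Indeed $\chi(W)=0$ and $\sigma(W)=0$, and $c_1(\fs_W)$ is torsion because $\fs_W$ is spin, so $c_1^2=0$ rationally.

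By the matrix form of $\Wcheck_*$ and Lemma \ref{lem:fiber-products-zero}, only three entries remain: $W_o^o$, $W_s^o$, and $\Wbar_s^s$. Each is a sum of fiber products $\sigma\times_{[\fC^-]}\cM^+([\fC^-],W_\infty,\fs_W,[\fC^+])$ that are nondegenerate in $C^\delta_*([\fC^+])$. For such a product,
\begin{equation*}
\dim\sigma+\dim\cM^+-\dim[\fC^-]\le\dim[\fC^+].
\end{equation*}
This gives the target filtration level in terms of the source level.

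Now compare with the grading formula. In the boundary-unobstructed case,
\begin{equation*}
\Gr[\fC^-]-\Gr[\fC^+]=\dim\cM^+-\dim[\fC^-]\ge 0,
\end{equation*}
and in the obstructed case the difference is $\ge -1$. The obstructed case only occurs for reducible-to-reducible entries, namely $\Wbar_s^s$. There we argue directly: with the nice perturbation, reducibles on $W$ between boundary-stable $\CP^1$'s over the same reducible downstairs map the tower to itself with nonnegative grading drop, since $d=0$ and $b_1(W)=b^+(W)=0$. So the reducible moduli space is a standard linear-flow contribution, and the count shows it cannot lower $\Gr$.

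For the irreducible source entries $W_o^o$ and $W_s^o$, the source $[\fC^-]$ is a point, so $\dim[\fC^-]=0$ and $\Gr$ does not increase. Hence $\Wcheck_*$ maps $F_p$ into $F_p$.

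The expected main obstacle is the $\Wbar_s^s$ entry together with the bookkeeping between the $\delta$-chain dimension and $\Gr$ in the bigrading. On the $E^1$ page, the reducible part of $\Wbar_s^s$ must induce a well-defined map $H_i([\fC^-])\to H_{i'}([\fC^+])$ on critical submanifold homology. I will handle this by the same argument as in \cite[ch.~3,~Proposition~2.9]{f-lin}, adapted with signs via the involution $w$ as in Proposition \ref{prop:differentials}. The $E^1$ map is then the fiber-product-with-moduli-space map between homologies of critical submanifolds in equal $\Gr$ level. Convergence follows because the filtration is bounded below in each degree for a rational homology sphere, so it is exhaustive and complete.
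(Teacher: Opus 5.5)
Your overall plan is the paper's: show that $\Wcheck_*$ never raises $\Gr$, using $d(W,\fs_W)=0$, Lemma \ref{lem:fiber-products-zero} to kill the upper-right entry, and the dimension/grading formula. That formula settles $W_o^o$ and $W_s^o$ at once because $\dim[\fC^-]=0$. There is a genuine gap, however, in the only case with real content, $\Wbar_s^s$.

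\textbf{Where the gap is.} A moduli space $\cM^+([\fC^-],W_\infty,\fs_W,[\fC^+])$ with both ends boundary-stable is \emph{not} boundary-obstructed. On a cobordism, obstruction means stable-to-unstable, i.e.\ the $\Wbar_u^s$ piece, which Lemma \ref{lem:fiber-products-zero} has already disposed of. So the unobstructed formula applies to $\Wbar_s^s$. Since $\dim[\fC^-]=2$, it gives only $\Gr[\fC^-]-\Gr[\fC^+]=\dim\cM^+-2\ge -2$. Your blanket claim that $\dim\cM^+-\dim[\fC^-]\ge 0$ in the unobstructed case fails exactly for this entry, and the bound $\ge -1$ is not what any version of the formula gives here.

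\textbf{Why your direct argument does not close it.} The facts $d=0$ and $b_1(W)=b^+(W)=0$ are already built into the dimension formula. That formula by itself still allows, for example, the fundamental class of $[\fC^-]\cong\CP^1$ to map through a $0$-dimensional moduli space to a point class on $[\fC^+]$ with $\Gr[\fC^+]=\Gr[\fC^-]+2$. Calling this a ``standard linear-flow contribution'' that ``cannot lower $\Gr$'' just asserts the conclusion. It also states it in the wrong direction: what you need is that $\Gr$ cannot \emph{rise}.

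\textbf{The missing ingredient.} For a nice $\Pin(2)$-non-degenerate perturbation, the boundary-stable critical submanifolds form a single tower of $\CP^1$ families over the one reducible downstairs. Because of the quaternionic structure, their gradings differ by multiples of $4$. Hence $\Gr[\fC^-]-\Gr[\fC^+]\in 4\Z$ together with $\ge -2$ forces $\Gr[\fC^+]\le\Gr[\fC^-]$, which is exactly the filtration property for $\Wbar_s^s$ and is how the paper concludes. With this added, the rest of your argument goes through.

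\textbf{A smaller point.} For the chain homotopies from one-parameter families, the grading count gives $\Gr[\fC^-]-\Gr[\fC^+]=\dim\cM_P-\dim[\fC^-]-1$. So the parameter costs one rather than gaining one, and such homotopies need not be filtered of degree $0$. The lemma as the paper uses it only needs $\Wcheck_*$ itself to be filtered, so drop or weaken that claim.
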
 \begin{proof}
    It suffices to prove that $\Wcheck_{*}$ takes chains with $\Gr = j$ to chains with $\Gr \leq j$. Suppose the chain map $\Wcheck_{*}$ has a non-zero component going from $[\fC^-]$ to $[\fC^+]$. The assumption on the homology of $W$ implies $d(W,\fs_W)=0$. By Lemma \ref{lem:fiber-products-zero}, there is no contribution from the upper-right corner of the matrix for $\Wcheck_{*}$, so any non-zero contribution must come from a moduli space \begin{equation*}
        \cM^+([\fC^-],W_\infty,\fs_W,[\fC^+]) 
    \end{equation*}
    where either $[\fC^-]$ is irreducible or both $[\fC^-],[\fC^+]$ are boundary-stable.
    
    The grading difference between $[\fC^-]$ and $[\fC^+]$ is related to the dimension of the moduli space by \begin{equation*}
        \Gr[\fC^-] - \Gr[\fC^+] = \dim \cM^+([\fC^-],W_\infty,\fs_W[\fC^+]) - \dim [\fC^-].
    \end{equation*}
     If $[\fC^-]$ is irreducible, then we automatically have $\Gr[\fC^+] \leq \Gr[\fC^-]$, since $\dim[\fC^-]=0$. In the case that both $[\fC^-],[\fC^+]$ are boundary-stable, we automatically have $\Gr[\fC^+] \leq \Gr[\fC^-]+2$, but the grading difference $\Gr[\fC^-] - \Gr[\fC^+]$ must be a multiple of 4, and so $\Gr[\fC^+] \leq \Gr[\fC^-]$.
\end{proof}

We record the following lemma for future reference. \begin{lemma}\label{lem:identity-on-reducibles}
    Suppose $W,\fs_W$ are as in Lemma \ref{lem:graded-chain-map}. Then, the restriction of $\Wcheck_{*}$ to the boundary stable reducibles $C^s\subseteq \CMto(Y,\fs_Y)$ is a direct sum of maps $C_*^{\delta}([\fC];\Z_{[\fC]})\to C_*^{\delta}([\fC];\Z_{[\fC]})$, each chain-homotopic to the identity. 
\end{lemma}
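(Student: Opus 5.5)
The restriction of $\Wcheck_{*}$ to $C^s$ is read off from the matrix form of $\Wcheck_{*}$. Lemma \ref{lem:fiber-products-zero} kills the upper-right entry, and on $C^s$ the lower-right entry is $\Wbar_s^s$, so the restriction is $\Wbar^s_s$. It is built from fiber products with reducible moduli spaces $\cM^{red+}([\fC^-],W_\infty,\fs_W,[\fC^+])$ with both ends boundary-stable. The plan is to show, first, that $\Wbar_s^s$ is block diagonal with respect to the critical submanifolds, and second, that each diagonal block is chain homotopic to the identity.

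For block diagonality, use the niceness of $\fq$: all boundary-stable critical submanifolds are $\CP^1$ families lying over the single reducible downstairs, indexed by eigenvalues of the $\Pin(2)$-equivariant Dirac operator. Because $H_*(W;\Q)\cong H_*(S^3;\Q)$, the reducible solution on $W_\infty$ is unique up to gauge. I would choose the perturbation on $W$ to vanish along the reducible locus, as in the nice case, and take the reducible to be a flat connection in temporal gauge pulled back from the cylinder. The reducible moduli space on $W$ is then the space of solutions of the linear Dirac equation on the blow-up, and its asymptotics are governed by spectral flow. Since $W$ is not a product, I would instead deform the metric and perturbation on $W$ to nearly cylindrical ones through a path, using the independence statement of Proposition \ref{prop:chain-maps}. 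Alternatively, keep $W$ and argue with gradings: by the dimension formula with $d(W,\fs_W)=0$, a nonzero contribution from $[\fC^-]$ to $[\fC^+]$ needs $\Gr[\fC^-]-\Gr[\fC^+]=\dim\cM^{red+}-2$. Together with the multiple-of-4 argument in the proof of Lemma \ref{lem:graded-chain-map}, this forces the off-diagonal pieces to be at least filtration decreasing.

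For the diagonal blocks, the map $C^\delta_*([\fC])\to C^\delta_*([\fC])$ is fiber product with the correspondence $\ev^-\times\ev^+:\cM^{red+}([\fC],W_\infty,[\fC])\to[\fC]\times[\fC]$, which has dimension 2. It is the graph of a map $\CP^1\to\CP^1$, namely parallel transport of the relevant Dirac eigenline across $W$. Since that map is a diffeomorphism isotopic to the identity, with degree $+1$ by the complex orientations, the fiber product operator is chain homotopic to the identity. The homotopy is the fiber product with the cylinder on the isotopy, which is a $\delta$-chain, and Lemma \ref{lem:fiber-product-boundary-orientation} fixes the signs.

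The main obstacle is the off-diagonal terms. A genuine reducible flow between different $\CP^1$ towers on a non-product $W$ need not vanish on the nose. If the deformation argument fails, I would settle for showing that these terms strictly lower the $\Gr$ filtration and so vanish on the $E^1$ page, which is what is used later.
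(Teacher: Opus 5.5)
Your reduction of the restriction of $\Wcheck_*$ to $\Wbar^s_s$ is correct. The diagonal-block idea can also be made to work. The direct-sum part of the statement, however, is not proved.

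\textbf{Off-diagonal terms.} The deformation route cannot work. $W$ is not a product, and no path of metrics and perturbations turns it into one. In any case, Proposition~\ref{prop:chain-maps} controls the chain map only up to chain homotopy, so it cannot produce an on-the-nose block decomposition. Your fallback, that off-diagonal terms strictly lower $\Gr$ and therefore die on $E^1$, is strictly weaker than the lemma, even if it suffices for the later applications.

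The missing observation is that the off-diagonal terms vanish identically in $C^\delta_*$, by the same degeneracy mechanism as in Lemma~\ref{lem:fiber-products-zero}. Suppose $[\fC^-]\neq[\fC^+]$ are boundary-stable. Then $\Gr[\fC^-]-\Gr[\fC^+]$ is a nonzero multiple of $4$, and your own dimension formula forces it to be $\geq -2$, hence $\geq 4$. So
$\dim\cM^{red+}([\fC^-],W_\infty,\fs_W,[\fC^+])=\Gr[\fC^-]-\Gr[\fC^+]+2\geq 6$.
The fiber product with a $k$-dimensional chain in $[\fC^-]$ is then a chain in $[\fC^+]\cong\CP^1$ of dimension $k+\dim\cM^{red+}-2\geq 4>\dim[\fC^+]+1$. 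Such a chain is degenerate, hence zero. The moduli spaces need not be empty, as you say, but their contributions are zero.

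\textbf{Diagonal blocks.} The phrase ``parallel transport of the eigenline across $W$'' has no meaning on a non-product $W$. The claim that the correspondence is the graph of a diffeomorphism needs an argument; here is one that works.

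First, $\cM^{red}([\fC],W_\infty,\fs_W,[\fC])=\mathbb{P}(V)$. Here $V$ is the space of solutions of the linear, perturbed Dirac equation on $W_\infty$ whose top mode at $-\infty$ is at most $\lambda$ and whose bottom mode at $+\infty$ is at least $\lambda$, where $\lambda$ is the eigenvalue for $[\fC]$. The dimension count gives $\dim_{\mathbb{C}}V=2$. The maps $\ev^\pm$ are projectivizations of the leading-coefficient maps $V\to E_\lambda\cong\mathbb{C}^2$.

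Second, both maps are injective. A nonzero kernel vector would be a reducible solution whose incoming limit has strictly smaller grading than its outgoing limit. That is a point in a moduli space of negative expected dimension, which is empty for a regular perturbation.

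So both maps are isomorphisms, and the correspondence is the graph of an element of $\mathrm{PGL}_2(\mathbb{C})$. This group is connected, which supplies your isotopy.

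\textbf{Sign.} The sign $+1$ should be derived from the canonical homology orientation and the orientation conventions for reducible moduli spaces (the $\bar\epsilon$ and $\bar\mu$ conventions, and the involution $w$), not just from ``complex orientations''. Proposition~\ref{prop:comparing-signs} relies on exactly this sign.
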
\begin{proof}
    The proof is the same as the proof of \cite[Lemma~3.3]{lin-ruberman-saveliev}.
\end{proof} 

}

\subsection{Compactness and Gluing}\label{subsection:necks}{

\newcommand{\XT}{\tildeX_{T}}

The double cover $\tildeX$ decomposes as two copies of $W$ glued along opposite copies of $Y$. Let $i^i,i^o:Y\hookrightarrow W$ be the inclusions of the two (incoming, outgoing) boundary components. For bookeeping purposes, let $W^0,W^1$ denote the two copies of $W$ and let $Y^0,Y^1$ be the two boundary components of $W$. Then, \begin{equation*}
    \tildeX \cong \colim\left(\begin{tikzcd}[sep=small]
                        & W^0 & \\
        Y^0\ar[ur, "i^i"]\ar[dr, "i^o", swap] & & Y^1 \ar[ul, "i^o", swap]\ar[dl, "i^i"] \\
                        & W^1 &  
    \end{tikzcd}\right).
\end{equation*}

Suppose that $\tildeX$ is equipped with an $\iota$-invariant metric, i.e., a metric with identical restrictions to the two copies of $W$. Let $W_{T} = ([-T,0]\times Y) \cup W \cup ([0,T]\times Y)$ be the Riemannian 4-manifold obtained from $W$ by sticking cylinders onto the ends and let $\XT$ be the Riemannian 4-manifold obtained from $X$ by sticking copies of the cylinder $Z_T = [-T,T]\times Y$ in place of each of the copies of $Y$, i.e., \begin{equation*}
    \XT \cong \colim\left(\begin{tikzcd}[sep=small]
            & W^0_T & \\
        Y^{0} \ar[ur, "i^i"]\ar[dr, "i^o", swap] & & Y^{1} \ar[ul, "i^o", swap]\ar[dl, "i^i"] \\
                        & W^1_T &  
    \end{tikzcd}\right).
\end{equation*}
We allow for the case $T=\infty$, taking $Z_\infty = ([-\infty,0)(0,+\infty])\times Y$ and $W_{\infty} = ((-\infty, 0]\times Y) \cup W \cup ([0,+\infty)\times Y)$. The proof of \cite[Theorem~8.2]{lin-ruberman-saveliev} goes through mostly unchanged to prove the following compactness result. \begin{proposition}\label{prop:compactness}
    For every sequence of positive real numbers $T_n\to+\infty$ and every sequence of monopoles $[A_n,\Phi_n]\in\cM(\tildeX_{T_n})$, there exists a subsequence $[A_{n_k},\Phi_{n_k}]$, gauge transformations $u_{n_k}$, and a monopole $[A_\infty,\Phi_\infty]\in\cM(\tildeX_\infty) = \cM(W_\infty)\times \cM(W_\infty)$ such that $u_{n_k}\cdot (A_{n_k},\Phi_{n_k})$ converges to $(A_\infty,\Phi_\infty)$.
\end{proposition}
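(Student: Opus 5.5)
We follow the neck-stretching compactness argument of \cite[Theorem~8.2]{lin-ruberman-saveliev}, which rests on a uniform energy bound plus the standard cylinder-convergence results of \cite{kronheimer-mrowka}. Since $\tildeX_{T_n}$ is assembled from $W^0_{T_n}$ and $W^1_{T_n}$ glued along two long necks $Z_{T_n}$, we restrict the monopole $[A_n,\Phi_n]$ to each copy of $W_{T_n}$ and to each neck, and extract convergent subsequences piece by piece.

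The first step is a uniform energy bound. For a fixed metric and perturbation on $\tildeX$, the analytic energy of a monopole on $\tildeX_T$ is expressed through the perturbed Chern--Simons--Dirac functional. Going once around the $S^1$ direction of $X$ changes $\cL$ by a topological term $\int (c_1(\fs)^2)$-type in the relevant period. Because $c_1(\fs_{\tildeX})$ is pulled back from $X$ with $H^2(X;\Q)=0$, this term is torsion and vanishes rationally, so the energy is bounded independently of $T$. We then bound the energy on $W^i_{T}$ plus each neck by $\cL$ evaluated on $Y^0,Y^1$ slices. Since $\pounds$ is bounded below on monopoles over a rational homology sphere, where the reducible locus is compact modulo gauge, and the total drop around the loop is zero, each piece gets a uniform bound.

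The second step obtains local convergence on the pieces. Given the energy bound, \cite[Theorem~5.1.1 and Section~10.7]{kronheimer-mrowka} give gauge transformations $u_n$ and a subsequence converging in $C^\infty_{loc}$ on compact subsets of $W^0_{T_n}$ and $W^1_{T_n}$. The limit is a finite-energy monopole on $W_\infty\sqcup W_\infty$. On each neck the energy over unit-length subcylinders tends to zero away from finitely many places, so the solutions are close to translation-invariant configurations and the ends of the limit approach critical points. Patching the local gauge transformations uses that $H^1(W;\Z)$ is torsion-free of rank $0$ rationally, so gauge choices on the two copies of $W$ extend across the necks.

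The main obstacle is the lack of a cut-down uniform energy on each half without pinning the Chern--Simons values at the two necks. If the limit would naturally be a broken trajectory through intermediate critical points on the necks, the statement as written ignores this, so convergence must be interpreted in the broken or compactified sense $\cM^+(W_\infty)$. To handle this I would first try to exclude nontrivial breaking by the index argument used in \cite{lin-ruberman-saveliev}, invoking $d(W,\fs_W)=0$. Otherwise I would accept the convergence modulo breaking as in \cite[Theorem~8.2]{lin-ruberman-saveliev}, on compact subsets of $W_\infty$.
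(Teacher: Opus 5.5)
Your proposal takes the same route as the paper, which only says that the proof of \cite[Theorem~8.2]{lin-ruberman-saveliev} carries over. The ingredients you list are that argument: a $T$-independent energy bound on each piece of $\tildeX_{T_n}$, then local compactness, finite-energy limits on $W_\infty\sqcup W_\infty$, and gauges patched across the necks. For the per-piece energy bound, the clean mechanism is that the neck energies are nonnegative drops of the perturbed Chern--Simons--Dirac functional (gauge invariant since $H^1(Y;\Z)=0$), the energies on the two copies of $W$ are bounded below, and the total is fixed; the functional being ``bounded below'' is not what does the work.

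Your ``main obstacle'' is not one. Convergence is only on compact subsets of $\tildeX_\infty$, and $\cM(\tildeX_\infty)=\cM(W_\infty)\times\cM(W_\infty)$ imposes no matching between the ends of the two copies. So breaking inside the necks is invisible, and your fallback is exactly the statement. By contrast, excluding breaking by an index count would not work here, since the sequence is arbitrary in the ordinary moduli space and the $I$-equivariant perturbation is not generic for it. That exclusion is only needed later, for real irreducibles, in the gluing step.
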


Suppose that for each $T$, the relevant gauge transformation $u$ is chosen so that its restriction to $W^0\sqcup W^1$ is independent of $T$ and such that its restrictions to the two cylinders $Z_T^0,Z_T^1$ are constant with values $1,-1$, respectively.  Suppose $0<T'<T\leq+\infty$. The irreducible moduli space on $\XT$ decomposes as \begin{equation}\label{eq:big-lim}
    \cM^*(\XT) \cong \lim\left(\begin{tikzcd}[sep=small]
            \cB^*(Y^{00}) & \ar[l,"R^i", swap] \cM^*(W_{T'}^0) \ar[r, "R^o"] & \cB^*(Y^{01}) \\
            \cM^*(Z_{T-T'}^0) \ar[u, "R^o"]\ar[d, "R^i", swap] &   & \cM^*(Z_{T-T'}^1) \ar[u, "R^i", swap]\ar[d, "R^o"] \\
            \cB^*(Y^{10}) & \cM^*(W_{T'}^1) \ar[l,"R^o"]  \ar[r, "R^i", swap] & \cB^*(Y^{01})
    \end{tikzcd}\right)
\end{equation}
where $R^i,R^o$ are the restrictions induced by boundary inclusions. The involution $I$ on $\cM^*(\XT)$ acts on the diagram in \eqref{eq:big-lim} by rotating the rectangle through $180^\circ$ and applying involutions $I$ to each of the objects. On each copy of $\cB^*(Y)$, this involution is $\jmath$. The fixed-point set $\cM^*(\XT)^I$ is identified with a fiber product of a copy of $\cM^*(W_{T'})$ and a copy of $\cM^*(Z_{T-T'})$, \begin{equation*}
    \begin{split}
        \cM^*(\XT)^I &\cong Fib((R^i,\,R^o)_{W\to Y}, (R^o,\,\jmath R^i)_{I\to Y}) \\ &=  \lim\left(\begin{tikzcd}[ampersand replacement=\&]
        \cM^*(W_{T'}) \ar[r, "{(R^i,\,R^o)}"] \& \cB^*(Y)\times \cB^*(Y) \& \ar[l, "{(R^o,\,\jmath R^i)}", swap] \cM^*(Z_{T-T'}) \end{tikzcd}\right).   
    \end{split} 
\end{equation*} 
Lemma \ref{lem:irreducible-moduli-spaces} tells us that the irreducible moduli spaces $\cM_\Re^*(\XT)$ and $\cM^*(\XT)^I$ coincide. The main result we will need in order compare the count of real solutions and the count of solutions on $W_\infty$ is the following proposition. \begin{proposition}\label{prop:moduli-spaces-homeo}
    Assume that the spin Dirac operator on $W_\infty$ is invertible. There exists $T_0>0$ such that for all $T\geq T_0$, the real irreducible moduli space $\cM_\Re^*(\XT)$ is regular and there exists a homeomorphism \begin{equation*}
        \cM_\Re^*(\XT) = \cM^*(\XT)^I \xlongrightarrow{\sim} \coprod_{[\fa]\in\Cr^{ o}}\cM^*([\fa],W_\infty,\fs_W,\jmath[\fa]).
    \end{equation*}
\end{proposition}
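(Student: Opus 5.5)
The plan follows the neck-stretching argument of \cite[Section~8]{lin-ruberman-saveliev}, adapted to the twisted gluing by $\jmath$. We first analyse the limiting space. Write $\cM^*(\XT)^I$ as the fiber product of $\cM^*(W_{T'})$ with $\cM^*(Z_{T-T'})$ over $\cB^*(Y)\times\cB^*(Y)$, via $(R^i,R^o)$ and $(R^o,\jmath R^i)$. As $T\to\infty$ the cylinder moduli space degenerates to broken trajectories. By Proposition~\ref{prop:compactness}, any sequence of $I$-fixed irreducible monopoles on $\tildeX_{T_n}$ subconverges, after gauge, to a pair of solutions on two copies of $W_\infty$. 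These are interchanged by $I$, so they are determined by a single $[\gamma]\in\cM(W_\infty)$ whose outgoing limit $[\fb]$ and incoming limit $[\fa]$ satisfy $[\fb]=\jmath[\fa]$, possibly with intervening broken trajectories on the necks.

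The first step is a dimension count ruling out everything except unbroken irreducible solutions in $\cM^*([\fa],W_\infty,\fs_W,\jmath[\fa])$ with $[\fa]\in\Cr^o$. The real moduli space is $0$-dimensional for generic data. Since $d(W,\fs_W)=0$ and $\Gr$ is $\jmath$-invariant, the expected dimension of $\cM([\fa],W_\infty,\jmath[\fa])$ is $\dim[\fa]$ in the unobstructed case, and $\dim[\fa]+1$ in the obstructed case. Any breaking adds trajectories on the cylinder of dimension $\geq 0$ and imposes matching conditions, so for a regular nice $\Pin(2)$-non-degenerate perturbation a broken limit would have negative total dimension. The remaining danger is limits through reducibles, including the $\CP^1$ families at the spin reducible. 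Here the hypothesis that the spin Dirac operator on $W_\infty$ is invertible is used: there are no reducible-limit solutions with nonzero spinor near the ends, so the limit cannot run through the boundary-stable or boundary-unstable towers. I would also check that reducibles on $\tildeX_T$ are not approached by irreducibles, using the same invertibility and Lemma~\ref{lem:irreducible-moduli-spaces}. Consequently, for $T\ge T_0$ every point of $\cM^*(\XT)^I$ lies in a small neighborhood of the glued image of $\coprod_{[\fa]\in\Cr^o}\cM^*([\fa],W_\infty,\jmath[\fa])$.

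The second step is gluing. The zero-dimensional spaces $\cM^*([\fa],W_\infty,\jmath[\fa])$ are finite and regular for a generic perturbation on $W$. Given such a solution, splice it with its $I$-image on the second copy of $W_{T}$, using the identification of the end at $Y^1$ via $\jmath$. This produces an $I$-invariant approximate solution on $\XT$. Apply the standard Newton iteration in the $I$-invariant (real) Sobolev spaces, which is possible because all perturbations are $\jmath$-equivariant and $\eta$ is $\iota$-anti-invariant. Linear estimates uniform in $T$ follow from surjectivity of the linearization on $W_\infty$ together with nondegeneracy of $[\fa]$. The real linearized operator is a summand of the ordinary one, so surjectivity of the real operator on $\XT$ follows, which gives the regularity of $\cM_\Re^*(\XT)$. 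Injectivity and surjectivity of the gluing map onto a neighborhood then give the homeomorphism, by the usual uniqueness-of-gluing argument combined with Step~1.

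The main obstacle is Step~1's exclusion of limits passing through the reducible $\CP^1$ families. Because the $\Pin(2)$-non-degenerate reducibles are Morse-Bott rather than isolated, the dimension bookkeeping differs from \cite{lin-ruberman-saveliev}. I would handle this using the grading relations above, namely that stable and unstable $\CP^1$'s over the same reducible differ in $\Gr$ by a positive amount, as in Lemma~\ref{lem:fiber-products-zero}, combined with Dirac invertibility to prevent slow convergence along the neck.
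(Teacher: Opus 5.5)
Your skeleton matches the paper's: compactness via Proposition~\ref{prop:compactness}, exclusion of broken limits, then an identification near $T=\infty$. The paper gets this by running the proof of \cite[Theorem~9.1]{lin-ruberman-saveliev}. The genuine difference is the gluing step.

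\textbf{Gluing: your route versus the paper's.} You write down the fiber-product description of $\cM^*(\tildeX_T)^I$, but then switch to a Newton iteration on $\tildeX_T$ in $I$-invariant spaces. You get regularity because the real linearization is the $I$-invariant summand of the ordinary one. The paper never glues on $\tildeX_T$. It keeps the fiber product and replaces $\cM^*(Z_{T-T'})$ near $[\fa]$ by the LRS chart $u_{[\fa]}(T,-)$. It then identifies $\cM^*_\Re(\tildeX_T)$ with $\coprod_{[\fa]}\mathrm{Fib}\big((R^i,R^o)_{W\to Y},\mu^\jmath_{[\fa],T}\big)$, where $\mu^\jmath_{[\fa],T}=(R^o,\jmath R^i)\circ u_{[\fa]}(T,-)$. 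Regularity of the real moduli space is equivalent to transversality of this fiber product (Lemma~\ref{lem:fiber-product-regular}). As in LRS, comparing these fiber products with the transversely cut out limit $\cM^*([\fa],W_\infty,\fs_W,\jmath[\fa])$ gives regularity and the homeomorphism together.

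What each buys: the paper's route needs no new analysis. Every estimate is an ordinary one on $W_{T'}$ or $Z_{T-T'}$ imported from LRS, and the real structure enters only through the twist by $\jmath$ and Lemma~\ref{lem:irreducible-moduli-spaces}. Your route is more self-contained and makes the source of regularity transparent. The cost is that you must supply the uniform right inverses and the uniqueness half of gluing in the real setting yourself.

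\textbf{Step~1 needs repair.} Your claim that a broken limit has negative total dimension holds for breaking at irreducibles, but fails once a boundary-obstructed piece appears. Take an irreducible $W$-piece from a boundary-unstable $\CP^1$ $[\fC]$ to a boundary-stable $\CP^1$ $[\fC']$, followed on the neck by a boundary-obstructed reducible trajectory from $[\fC']$ to $\jmath[\fC]$. Its total dimension is $(\Gr[\fC]-\Gr[\fC']+2)+(\Gr[\fC']-\Gr[\fC]+2)-2-2=0$, the same as the genuine contributions, so the total count cannot exclude it. It has to be killed piece by piece. For that, positivity of $\Gr[\fC']-\Gr[\fC]$ (all that Lemma~\ref{lem:fiber-products-zero} uses) is not enough. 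You need the gap to exceed $\dim\CP^1=2$ (for adjacent stable and unstable $\CP^1$'s it is $3$), so that the $W$-piece has negative dimension.

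\textbf{Role of the Dirac hypothesis.} Invertibility of the Dirac operator on $W_\infty$ does not keep limits out of the towers. Irreducible $W$-solutions from irreducibles to boundary-stable points (those counted by $W^o_s$) can occur. So do blown-up reducible $W$-solutions between tower points (cf.\ Lemma~\ref{lem:identity-on-reducibles}). What the hypothesis gives, together with nondegeneracy of the reducible on $Y$, is uniform invertibility of the Dirac operator at the reducible on $\tildeX_T$ for large $T$. Hence irreducible real solutions cannot collapse onto the reducible. This disposes exactly of the limits whose $W$-part is reducible: a neck chain from the reducible to itself has zero energy, so downstairs the whole limit is the reducible.
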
 The proof of Proposition \ref{prop:moduli-spaces-homeo} is similar to that of \cite[Theorem~9.1]{lin-ruberman-saveliev}. Let $u_{[\fa]}(T,-): B([\fa])\to \cM^*(Z_T)$ be the chart provided by \cite[Theorem~9.3]{lin-ruberman-saveliev} and let $\mu_{[\fa],T}^\jmath = (R^o,\jmath R^i)\circ u_{[\fa]}(T,-):B([\fa])\to \cB^*(Y)\times \cB^*(Y)$. The following lemma serves the purpose of \cite[Lemma~9.6]{lin-ruberman-saveliev}. \begin{lemma}\label{lem:fiber-product-regular}
    There exist $T''>T'>0$ such that for all $T\geq T''$ sufficiently large, there exists a homeomorphism \begin{equation*}
        \cM_\Re^*(\XT) \xrightarrow{\sim} \coprod_{[\fa]\in\Cr^ o} Fib((R^i,R^o)_{W\to Y}, \mu_{[\fa],T}^\jmath).
    \end{equation*}
    Moreover, the real irreducible moduli space is regular if and only if the maps $(R^i,R^o)_{W\to Y}$ and $\mu_{[\fa],T}^\jmath$ are transverse for all $[\fa]\in\Cr^ o$.
\end{lemma}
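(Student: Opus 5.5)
We follow the proof of \cite[Lemma~9.6]{lin-ruberman-saveliev}, with the twist by $\jmath$ built into the gluing. First use the identification just established,
\begin{equation*}
\cM_\Re^*(\XT)=\cM^*(\XT)^I\cong Fib\bigl((R^i,R^o)_{W\to Y},(R^o,\jmath R^i)_{I\to Y}\bigr),
\end{equation*}
where the fiber product pairs $\cM^*(W_{T'})$ with $\cM^*(Z_{T-T'})$. This reduces the statement to a description of the irreducible monopoles on the long cylinder $Z_{T-T'}$ and of their boundary restrictions.

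The first step is a compactness argument in the spirit of Proposition~\ref{prop:compactness}. Assume there is a sequence $T_n\to\infty$ and real irreducibles on $\tildeX_{T_n}$. After passing to a subsequence and applying gauge transformations, the restrictions to the neck $Z_{T_n-T'}$ are $C^\infty$-close, on the middle portion, to a translation-invariant solution. By Proposition~\ref{prop:compactness}, the limit on each half is an element of $\cM(W_\infty)$. Invertibility of the spin Dirac operator on $W_\infty$ excludes reducible limits on $W_\infty$, so the neck limit is irreducible. Also, the total energy is bounded (through $d(W,\fs_W)=0$ and the topological energy identity), so only finitely many critical points can occur. The conclusion is that for $T\ge T''$, with $T''>T'$ large enough, every element of $\cM^*(Z_{T-T'})$ appearing in the fiber product lies in the image of one of the charts $u_{[\fa]}(T-T',-):B([\fa])\to\cM^*(Z_{T-T'})$ of \cite[Theorem~9.3]{lin-ruberman-saveliev}, for a unique $[\fa]\in\Cr^o$. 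Uniqueness comes from the charts having disjoint images for large $T$. Irreducible critical points are nondegenerate in the usual sense for a $\Pin(2)$-non-degenerate perturbation, so those charts are available.

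Substituting the charts gives $(R^o,\jmath R^i)\circ u_{[\fa]}=\mu^\jmath_{[\fa],T}$ and hence the homeomorphism onto $\coprod_{[\fa]}Fib((R^i,R^o)_{W\to Y},\mu^\jmath_{[\fa],T})$. Two points still need checking. The first is surjectivity: every point of the fiber product gives a glued configuration on $\XT$ which is a monopole and which is fixed by $I$. The reason is that $\jmath$ intertwines the two ends when $Y^1$ is glued back to $Y^0$, and $u$ was chosen to be $\pm1$ on the two cylinders. Lemma~\ref{lem:irreducible-moduli-spaces} then places this configuration in the real moduli space. The second is that the sign convention $I=-I_{\iota(x)}$ on the second cylinder agrees with $\jmath$ up to the constant gauge transformation $-1$, and this is harmless in $\cB^*$.

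Regularity is handled by the usual argument that the deformation complex of a fiber product is the fiber product of deformation complexes. The linearization of the real Seiberg--Witten map at a real solution is the $I$-invariant part of the ordinary one, and on the quotient $X$ this is the restriction to one copy of $W$ glued to the neck. Mayer--Vietoris for Fredholm complexes, using $R^i,R^o$, identifies the cokernel of the real linearization with the cokernel of $d(R^i,R^o)-d\mu^\jmath_{[\fa],T}$. Since $B([\fa])$ is a ball and the chart is a diffeomorphism onto its image, $\mu^\jmath$ parametrizes the neck solutions smoothly. I expect the main obstacle to be this step: proving uniformity in $T$ of the neck estimates after twisting by $\jmath$, and showing that the linear gluing isomorphism is compatible with the $I$-invariant splitting, which requires the chosen $I$-invariant weighted Sobolev norms. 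The approach would be to carry over the estimates of \cite[Section~9]{lin-ruberman-saveliev} verbatim and note that $\jmath$ is an isometry of $\cB^*(Y)$ preserving $\fq$.
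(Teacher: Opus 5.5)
Your proposal follows essentially the same route as the paper. The homeomorphism comes from running the argument of \cite[Lemma~9.6]{lin-ruberman-saveliev} with the neck charts composed with $(R^o,\jmath R^i)$. The regularity statement comes from identifying the real linearization with the ordinary linearized equations on $W_{T'}$ and $Z_{T-T'}$ and applying the fiber-product criterion of \cite[Proposition~17.2.8]{kronheimer-mrowka}, which is your Mayer--Vietoris argument; since that criterion applies for each fixed $T$, the uniform estimates you worry about are needed only for the homeomorphism half.

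The one step your compactness sketch does not justify is the passage from closeness on the middle of the neck to the whole restriction to $Z_{T-T'}$ lying in the image of some $u_{[\fa]}$. Ruling out breaking along the neck, including necks that pass near the reducible, uses regularity of the moduli spaces over $W_\infty$ and the cylinder together with additivity of the index. It does not follow from invertibility of the Dirac operator, which only keeps irreducibles from degenerating to the reducible. The paper also imports this step from \cite{lin-ruberman-saveliev}, so it is a soft spot in your sketch rather than a departure from the paper's proof.
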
 \begin{proof}
    The existence of a homeomorphism is proved in the same way as in \cite[Lemma~9.6]{lin-ruberman-saveliev}. The real part of the linearized equations over $\XT$ restricts to the linearized ordinary equations over $W_{T'}$ and $Z_{T-T'}$. An argument based on \cite[Proposition~17.2.8]{kronheimer-mrowka} proves the regularity claim. See also \cite[Theorem~19.1.4]{kronheimer-mrowka}.
\end{proof}
\begin{proof}[Proof of Proposition \ref{prop:moduli-spaces-homeo}]
    The proof is the same as the proof of \cite[Theorem~9.1]{lin-ruberman-saveliev}, with Proposition \ref{prop:compactness} filling the role of \cite[Theorem~8.2]{lin-ruberman-saveliev} and Lemma \ref{lem:fiber-product-regular} filling the role of \cite[Lemma~9.6]{lin-ruberman-saveliev}.
\end{proof}

}

\subsection{Comparing Signs}\label{subsection:comparing-signs}{

\newcommand{\XT}{\tildeX_{T}}

Proposition \ref{prop:moduli-spaces-homeo} establishes a correspondence between real irreducibles on $\tildeX$ and irreducibles on the cobordism $W$ joining an irreducible critical point $[\fa]$ to its image under $\jmath$. In order to compare the invariants obtained by counting solutions, we need to compare orientations of the moduli spaces. By understanding how each of the moduli spaces is oriented and comparing these orientations, we will be able to express the invariant $|\deg|$ in terms of a Lefschetz number. Before passing to homology, we need to work at the chain level. The complex $\CMto$ decomposes as a direct sum $\CMto = C^o\oplus C^s$. Note that $C^o \cong \bigoplus_{[\fC]\in\Cr^o} \Z_{[\fC]}$ is a finitely-generated $\bJ$-graded abelian group, so the Lefschetz number of a map $C^o\to C^o$ is well-defined using the canonical map $\bJ\to \Z/2$. Let $\operatorname{proj} : \CMto \to C^o$ be the projection with respect to the aforementioned direct-sum decomposition.

\begin{proposition}\label{prop:comparing-signs}
    Let $W_{*}^o:C^ o\to C^ o$ be the composition \begin{equation*}
        \begin{tikzcd}
            C^ o \ar[r, hook] & \CMto \ar[r, "\Wcheck_{*}"] & \CMto \ar[r,"{\operatorname{proj}}"] & C^o.
        \end{tikzcd}
    \end{equation*}
    Then, \begin{equation*}
        |\deg(\tildeX)| = |1 + 2\Tr(\jmath_*W_*^o)|.
    \end{equation*}
\end{proposition}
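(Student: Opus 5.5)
The plan is to compute $\#\cM_\Re^f(\tildeX_T)$ for large $T$ by splitting it into the reducible contribution and the irreducible contribution, and then to match the irreducible part with a trace. A generic $\iota$-anti-invariant perturbation works. We assume (after perturbing the metric on $W$ if needed) that the spin Dirac operator on $W_\infty$ is invertible, so that Proposition \ref{prop:moduli-spaces-homeo} applies.

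\textbf{Reducible contribution.} The real reducible solutions on $\tildeX_T$ form a single point, since $b_1^{-\iota^*}=0$. The real Dirac operator at this point is invertible for large $T$, again using invertibility on $W_\infty$ and a gluing argument. The framed moduli space therefore contributes $\pm1$. The quotient $\cM_\Re^f\to\cM_\Re$ is 2-to-1 on irreducibles, so
\begin{equation*}
    \#\cM_\Re^f(\tildeX_T)=\pm1+2\,\#\cM_\Re^*(\tildeX_T),
\end{equation*}
where the irreducible count is taken with orientations pulled back from the framed space.

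\textbf{Irreducible contribution.} By Proposition \ref{prop:moduli-spaces-homeo}, $\cM_\Re^*(\tildeX_T)$ is identified with $\coprod_{[\fa]\in\Cr^o}\cM^*([\fa],W_\infty,\fs_W,\jmath[\fa])$. Only the zero-dimensional pieces contribute. The signed count of $\cM([\fa],W_\infty,\jmath[\fa])$ is precisely the matrix entry $\langle W_*^o[\fa],\jmath[\fa]\rangle$, because $W^o_*=W^o_o$ counts points of zero-dimensional moduli spaces between irreducibles, oriented via $\Lambda([\fa])\times\Lambda(\jmath[\fa])$ and the homology orientation. Composing with $\jmath$, which maps $\Z_{\jmath[\fa]}\to\Z_{[\fa]}$, turns the sum of these entries into the diagonal of $\jmath_*W_*^o$.

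\textbf{Signs.} The key step is the orientation comparison. For each $[\fa]$ I need to show that the real orientation of the corresponding point differs from the Floer orientation by $(-1)^{\gr^{(2)}[\fa]}$, up to a sign independent of $[\fa]$. That is exactly what turns the sum of diagonal entries into the Lefschetz number $\Tr(\jmath_*W^o_*)$.

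I would follow \cite[Section~10]{lin-ruberman-saveliev}. The real determinant line on $\tildeX_T$, which by Remark \ref{rmk:pin2-structures} is the $\spin^{c-}$ line on $X_T$, is split by neck-stretching into the line for $W$ glued to the line for the cylinder $Z_{T}$ with the $\jmath$-twisted identification of its ends. The cylinder factor carries the orientation set $\Lambda([\fa])\otimes\Lambda(\jmath[\fa])$, which is identified via $\jmath$ with $\Lambda([\fa])^{\otimes2}$. Closing up the cylinder is a trace-type gluing: the canonical trivialization of $\Lambda([\fa])^{\otimes 2}$ introduces the sign $(-1)^{\gr^{(2)}}$, exactly as in the finite-dimensional Lefschetz fixed-point formula, where the sign of $\det(1-d\phi)$ compares the fiber-product orientation with the graph orientation. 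The reducible point fixes the global sign and the homology orientation of $W$ matches the canonical one, so the two terms combine into $|1+2\Tr(\jmath_*W^o_*)|$ up to the global sign.

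I expect this orientation comparison to be the main obstacle. In particular, one has to check that $\jmath$ acts on $\Lambda([\fa])$ compatibly with the real structure, meaning that the involution $I=u\cdot j$ on spinors induces the same identification as the chain-level $\jmath$. This requires care with the gauge transformation $u$, which is $\pm1$ on the two necks. I would first check it on a product region, where the determinant lines can be computed explicitly.
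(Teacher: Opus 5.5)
Your proposal follows essentially the same route as the paper. You identify the real irreducibles, via the $\spin^{c-}$ picture and Proposition~\ref{prop:moduli-spaces-homeo}, with $\coprod_{[\fa]\in\Cr^o}\cM^*([\fa],W_\infty,\fs_W,\jmath[\fa])$, transport orientations along the neck-stretching family $T\in[T_0,+\infty]$, and pick up the per-point sign $(-1)^{\gr^{(2)}[\fa]}$, which the paper attributes to the duality map identifying orientation sets over $Y$ and $-Y$ (your ``trace-type gluing''). The one step you only assert is the relative sign between the reducible $1$ and the term $2\Tr(\jmath_*W_*^o)$; the paper settles it with Lemma~\ref{lem:identity-on-reducibles} (the reducible part of $\Wcheck_*$ is chain-homotopic to the identity for the canonical homology orientation), and you should cite that input explicitly rather than just saying the homology orientation ``matches the canonical one.''
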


It will be easier to directly compare orientations if we recall Remark \ref{rmk:pin2-structures}. For all $T\in [0,+\infty)$, the real irreducible moduli space $\cM_\Re^*(\XT)$ may be identified with the $\spin^{c-}$ moduli space $\cM^*(X_T, I\setminus\fs_{\tildeX})$. There is an identification of the linearized Seiberg-Witten operators relevant to orienting these spaces, so that $\#\cM_\Re^*(\XT) = \#\cM^*(X_T, I\setminus\fs_{\tildeX})$. For $T=+\infty$, we have an identification of the disjoint union $\coprod_{[\fa]\in\Cr^{ o}}\cM^*([\fa],W_\infty,\jmath[\fa])$ with the disjoint union $\coprod_{[\fa]\in\Cr^{ o}}\cM^*(X_\infty,I\setminus\fs_{\tildeX};[\fa])$ of moduli spaces of $\spin^{c-}$ solutions on the cylindrical ends manifold $X_\infty$ limiting to the same $[\fa]$ at both ends. The limiting points of $\spin^{c-}$ solutions are not quite well-defined, as there is an ambiguity as to whether the limiting point is $[\fa]$ or $\jmath[\fa]$; nevertheless, the union over all possible limits gives a well-defined moduli space. 

\begin{proof}[Proof of Proposition \ref{prop:comparing-signs}]
Let $\cM_T^* = \cM^*(X_T, I\setminus\fs_{\tildeX})$ for $T\in [0,+\infty)$ and let $\cM_\infty^*=\coprod_{[\fa]\in\Cr^{ o}}\cM^*(X_\infty,I\setminus\fs_{\tildeX};[\fa])$. The homeomorphisms of Proposition \ref{prop:moduli-spaces-homeo} can be patched together to give a fiber bundle $\cM^*\to [T_0,+\infty]$ such that the fiber over $T$ is $\cM_T^*$. We get a line bundle over $\cM^*$ by patching together the determinant line bundles $\det(\cD_T)$ where $\cD_T$ is the linearized Seiberg-Witten operator over $\cM_T^*$. This establishes a correspondence between orientations of $\cM_T^*$ for $0 \ll T<+\infty$ and orientations of $\cM_\infty^*$. For finite $T$, the moduli space $\cM_T^*$ is oriented such that \begin{equation*}
    |\deg(\tildeX)| = |1\pm 2\#\cM_T^*|.
\end{equation*}
We call the corresponding orientation on $\cM_\infty^*$ the $\fo_{closed}$. There is also the canonical orientation $\fo_{can}$ for which $\#_{\fo_{can}}\cM_\infty^* = \operatorname{trace}(\jmath_*W_*^o).$
For fixed $[\fa]\in\Cr^o$, these two orientations differ by the sign $(-1)^{\gr^{(2)}[\fa]}$. This sign comes from the definition of the duality map identifying orientations over $Y$ and $-Y$; see \cite[Definition~22.5.3, Proposition~25.2.1]{kronheimer-mrowka}. This establishes the equality \begin{equation*}
    |\deg(\tildeX)| = |1\pm 2\Tr(\jmath_*W_*^o)|.
\end{equation*}
By Lemma \ref{lem:identity-on-reducibles}, we deduce that the orientation on $\cM_\Re^{f}(\XT)$ such that the real reducible is counted with positive sign corresponds to the orientation $\fo_{closed}$ such that $\#_{\fo_{closed}}\cM_\infty^* = \Tr(\jmath_*W_*^o).$ Therefore, \begin{equation*}
    |1 + 2\Tr(\jmath_*W_*^o)|.
\end{equation*}
\end{proof}

}

\subsection{Proof of Theorem \ref{thm:deg-formula}}\label{subsection:the-proof}{

Proposition \ref{prop:comparing-signs} expresses $|\deg(\tildeX)|$ in terms of the Lefschetz number on irreducibles. It remains to interpret this as a Lefschetz number on $\HMred(Y)$. We view $\HMred(Y)$ as the cokernel of the map $i_*:\HMbar(Y)\to \HMto(Y)$. Let $N\gg 0$ and let ${\CMto}_{\leq N}$ be the subcomplex of $\CMto(Y)$  generated by chains in critical submanifolds with grading $\leq N$, and let $\HMto_{\leq N}$ be the homology of this subcomplex. For $N$ sufficiently large, we have a short exact sequence \begin{equation*}
    0 \longrightarrow \im i_{*\leq N} \longrightarrow \HMto_{\leq N} \longrightarrow \HMred \longrightarrow 0.
\end{equation*}
Therefore, \begin{equation}\label{eq:trace1}
    \Tr\Big(\jmath_* \Wcheck_{\leq N}\Big) = \Tr\Big(\jmath_* \Wcheck_{\leq N}|_{\im i_*}\Big) + \Tr\Big(\jmath_* W_{*}\Big).
\end{equation}
Let $\widecheck{E}$ be the spectral sequence given by Proposition \ref{prop:grading-filtration}. Lemma \ref{lem:graded-chain-map} implies that $W$ induces a chain map on \begin{equation*}
    \widecheck E_{ij}^1 = \bigoplus_{\underset{\operatorname{Gr}([\fC]) = j}{[\fC]\in \Cr^o\sqcup\Cr^s}} H_i([\fC];\Z_{[\fC]})
\end{equation*}
such that the Lefschetz number of the induced map on the truncation $\widecheck{E}^{1}_{\leq N}$ is the same as on the homology $\HMto_{\leq N}$. Call this map $\widecheck{EW}_{\leq N}$. We have \begin{equation*}
    \Tr\Big(\jmath_* \Wcheck_{\leq N}\Big) = \Tr\Big(\jmath_* \widecheck{EW}_{\leq N}\Big) = \Tr\Big(\jmath_* W_*^o\Big) + \Tr\Big(\jmath_* EW_{\leq N}^{s}\Big).
\end{equation*}
Since the reducible critical submanifolds are all homeomorphic to $S^2$ with $\jmath_*$ acting as the antipodal map, the Lefschetz number of $\jmath_*$ on reducibles is zero. By Lemma \ref{lem:identity-on-reducibles}, we have $\Tr(\jmath_* W_{\leq N}^{s}) = 0$. Therefore, \begin{equation}\label{eq:trace2}
   \Tr\Big(\jmath_* \Wcheck_{\leq N}\Big) = \Tr\Big(\,\jmath_* W_*^o\,\Big).
\end{equation}
Combining \eqref{eq:trace1} and \eqref{eq:trace2} and the fact that $W$ induces the identity on $\im i_*^{\leq N}$, we have \begin{equation}\label{eq:trace3}
    \Tr\Big(\,\jmath_* W_*^o\,\Big) = \Tr\Big(\jmath_* \text{ on }\im (i_*)_{\leq N}\Big) + \Tr\Big(\jmath_* W_*\text{ on }\HMred\Big)
\end{equation}
Consider the case that $W$ is the trivial cobordism, so that $W$ induces the identity on irreducibles as well. The Lefschetz number of $\jmath_*$ on irreducibles is zero since $\jmath_*$ freely permutes irreducibles. Therefore, as a special case of \eqref{eq:trace3}, we have \begin{equation}\label{eq:trace4}
    0 = \Tr\Big(\jmath_* \text{ on }\im (i_*)_{\leq N}\Big) + \Tr\Big(\jmath_* \text{ on }\HMred\Big).
\end{equation}
Combining \eqref{eq:trace3} and \eqref{eq:trace4}, \begin{equation*}
    \Tr\Big(\,\jmath_* W_*^o\,\Big) = \Tr\Big(\jmath_*(W_* - 1) \text{ on }\HMred\Big).
\end{equation*}
This completes the proof.

}


\section{Examples} \label{section:examples}{

Our main theorem applies to smooth 2-knots in $S^4$ which bound rational homology balls. The set of smooth 2-knots in $S^4$ is far from classified and likely far from classifiable. Even if we restrict attention to the class of 2-knots which bound rational homology balls, there is likely no hope of classification. Further, it is not clear whether this is a particularly large class.

The most famous and well-studied class of 2-knots consists of the spun knots and their cousins-in-law, the deform-spun knots; see \cite{litherland}. Suppose $K\hookrightarrow S^3$ is a knot and $\gamma$ an untwisted deformation of $K$. If $m\neq 0$, then the twisted deform-spun knot $\tau^m\gamma K \hookrightarrow S^4$ is fibered. The fiber is typically, but not always, a rational homology ball. When $\gamma$ is the identity, the fiber of the twist-spin $\tau^m K$ is the $m$-fold cyclic branched cover $\Sigma_m(K)$ and when $\gamma=\rho^n$ is a roll, the fiber of the twist-roll-spin $\tau^m \rho^n K$ is a Dehn surgery along a nullhomologous knot in $\Sigma_m(K)$ with nonzero framing. As a consequence of \cite[\textsection 6]{fox-iii}, the fiber of $\tau^m\rho^n K$ is a rational homology ball if and only if the Alexander polynomial $\Delta_K$ does not have zeroes at any $m$-th roots of unity.

For pure untwisted deformations (i.e., when $m=0$), the corresponding deform-spun knots typically do not bound rational homology balls. Primary topological obstructions to a 2-knot bounding a rational homology ball come from the Alexander module. Recall \cite[\textsection 3.3.1]{carter-saito} that the \emph{(first) Alexander module} $A(\cK)$ of a knot $\cK$ in any dimension is the relative first homology of the infinite cyclic cover, $H_1(X_\infty(K),\pi^{-1}(x); \Z)$, considered as a module over the ring $\Lambda_\Z=\Z[T^{-1},T]$ of Laurent polynomials. If $\cK$ bounds a rational homology ball, then $A(\cK)\otimes \Q = 0$. Let $\Delta(\cK)\subseteq \Lambda_\Z$ denote the \emph{Alexander ideal} of $\cK$, i.e., the first elementary ideal of $A(\cK)$. Then, $A(\cK)\otimes \Q = 0$ if and only if $\Delta(\cK)\otimes \Q = \Lambda_\Q$. For a knot $K\hookrightarrow S^3$, the ideal $\Delta(K)$ is principal, generated by the Alexander polynomial $\Delta_K(T)$. The Alexander polynomial of $K$ is monic. Thus, for 1-knots, the condition that $A(K)$ is $\Z$-torsion amounts to $\Delta(K)=1$. 

Let us see how the Alexander module obstructs twist-roll-spun knots from bounding rational homology balls. We start with the purely roll-spun knots.

\begin{lemma}\label{lem:alexander-roll}
    Let $K\hookrightarrow S^3$ be a knot. Then, for all $n\in \Z$, \begin{equation*}
        A(\rho^n K) = A(K).
    \end{equation*}
\end{lemma}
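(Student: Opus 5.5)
The plan is to compute the Alexander module from the knot group. The Alexander module of a knot in any dimension depends only on the fundamental group $\pi_1(S^4\setminus\cK)$ together with its abelianization map to $\Z$. Indeed, $A(\cK)=H_1(X_\infty,\pi^{-1}(x);\Z)$ is the relative homology of the pair (infinite cyclic cover, fiber over the basepoint). This can be computed by Fox calculus from any presentation of the group, with the meridian sent to $T$. It therefore suffices to compare the knot groups of $\rho^n K$ and $K$ as groups over $\Z$.

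\textbf{Step 1: reduce to the spun knot.} I will first recall Litherland's description of deform-spun knots \cite{litherland}. For a deformation $\gamma$ of $(S^3,K)$ fixing a neighborhood of a base point, the complement of $\gamma K$ is obtained from the exterior $E$ of $K$, minus a small arc, by forming the mapping torus of $\gamma$ and then gluing in $S^1\times B^2$ along the invariant ball. For the roll $\rho$, the deformation is isotopic rel boundary to the identity on the exterior of the knot arc through a motion sliding the arc along itself. Litherland shows that its action on $\pi_1$ of the arc complement is conjugation by a longitude-type element.

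\textbf{Step 2: compute the group.} By van Kampen, $\pi_1(S^4\setminus\rho^nK)$ is the quotient of the mapping torus group $\pi_1(E')\rtimes_{\rho^n_*}\langle t\rangle$ by the relation that kills $t$ together with the meridian identification coming from the glued-in solid torus. For the spun knot ($n=0$) this gives $\pi_1(S^3\setminus K)$ itself. For $\rho^n$, the monodromy $\rho^n_*$ is conjugation by $\lambda^n$, where $\lambda$ is the longitude. After killing $t$, the relation $g=\lambda^n g\lambda^{-n}$ is imposed for all $g$, so $\lambda^n$ becomes central.

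\textbf{Step 3: pass to Alexander modules.} Even if the groups differ, the Alexander module depends only on the metabelian quotient. The longitude $\lambda$ lies in the second commutator subgroup $G''$ of $G=\pi_1(S^3\setminus K)$. Relations of the form $[\lambda^n,g]=1$ therefore lie in $G''$ and do not change $G/G''$. Equivalently, in Fox calculus the extra relators have Fox derivatives that vanish after abelianization. Hence $A(\rho^nK)\cong A(K)$ as $\Lambda_\Z$-modules.

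The main obstacle is Step 2: pinning down precisely how the roll acts on $\pi_1$ and what the spinning construction's van Kampen gluing kills. My first attempt will be to take Litherland's presentation of $\pi_1$ of a deform-spun knot at face value: the group of $\gamma K$ is $G$ modulo the relations $g=\gamma_*(g)$. Given that presentation, Step 3 is routine.
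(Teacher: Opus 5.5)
Your proof is correct, but it goes through the knot group rather than the paper's geometric model.

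\textbf{The paper's route.} It builds the complement of $\rho^nK$, up to homotopy, from $S^1\times(S^3\setminus K)$ by attaching $S^1\times D^2$ so that $\partial D^2$ goes to the spinning circle plus $n$ times the longitude. It lifts this to the infinite cyclic cover and notes that the class killed in $H_1(S^1\times X_\infty(K))$ does not depend on $n$, because the lifted longitude is nullhomologous in $X_\infty(K)$. It then needs the five lemma to pass from $H_1(X_\infty)$ to the relative module. It also needs the separate observation that the spun knot ($n=0$) has the same group as $K$.

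\textbf{Your route.} You take Litherland's presentation $\pi(\rho^nK)=G/\langle\langle[\lambda^n,g]\rangle\rangle$ as input; the paper itself uses exactly this presentation in the proof of Proposition~\ref{prop:alexander-twist-roll}. From $\lambda\in G''$ you get that the added relators lie in $G''$. Hence the metabelian quotients, together with their maps to $\Z$, agree. The fact that the relative Alexander module depends only on $G/G''$ then gives the lemma with no base case. One way to see that fact is the exact sequence $0\to G'/G''\to A\to I_\Lambda\to 0$, with $I_\Lambda$ the augmentation ideal, plus the five lemma.

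\textbf{Comparison.} Both arguments rest on the same input: the longitude dies in $H_1$ of the infinite cyclic cover. Your version gives the slightly stronger statement that the metabelian groups coincide. It also meshes directly with the Fox-calculus computation in the next proposition.

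\textbf{One correction.} Your ``equivalently, the Fox derivatives vanish after abelianization'' sentence is false as stated. The abelianized Fox derivatives of $[\lambda^n,x_j]$ are $n(1-T)[\partial_i\lambda]$, which is in general nonzero. For example, for the trefoil with a Wirtinger presentation, $([\partial_i\lambda])_i$ is a nonzero vector such as $(T^{-3}+T^{-1},-T^{-3},-T^{-1})$.

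What is true is this: $([\partial_i\lambda])_i$ is the $1$-chain of a lift of $\lambda$ to the infinite cyclic cover. Since $\lambda\in G''$, that chain is a boundary. So these rows lie in the row space of the Alexander matrix of $K$: they are redundant, not zero. The group-theoretic argument you actually rely on is unaffected.
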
 \begin{proof}
    Let $\mu_K,\lambda_K\subseteq \partial (S^3\setminus K)$ denote a meridian, longitude of $K$, respectively. Up to homotopy, $S^4\setminus \rho^n K$ can be built from $S^1\times (S^3\setminus K) = S^1\times \partial(S^3\setminus K) \cong S^1\times \mu_K\times \lambda_K$ by gluing $S^1\times D^2$ along $S^1\times \partial D^2$ to $\partial (S^1\times (S^3\setminus K))$ such that $S^1\times *$ is glued to $* \times \mu_K$ and $*\times\partial D^2$ is glued to $S^1\times * + n(*\times \lambda_K)$. The infinite cyclic cover $X_\infty(\rho^n K)$ can be built up to equivariant homotopy from $S^1\times X_\infty(K)$ by gluing $\R\times D^2$ along $\R\times \partial D^2$ to $\partial(S^1\times X_\infty(K)) = S^1\times \partial X_\infty(K) \cong S^1 \times \R \times \lambda_K$ such that $\R\times *$ is glued to $*\times \R$ and $*\times \partial D^2$ is glued to $S^1\times * + n(*\times \lambda_K)$.

    At the level of homology, this gluing has the effect of killing $[S^1\times * + n(*\times \lambda_K)] \in H_1(S^1\times X_\infty(K);\Z)$. Since $\lambda_K$ is already nullhomologous in $X_\infty(K)$, the homology $H_1(X_\infty(\rho^n K);\Z)$ exhibits no dependence on $n$ as a $\Lambda_\Z$-module. By the five lemma and the long exact sequence for the pair $X_\infty(\rho^nK),\pi^{-1}(x)$, there is likewise no dependence on $n$ for the Alexander module $A(\rho^n K)$. In the case $n=0$, we have the spun knot $\rho^0 K = \spin(K)$. Recall that the Alexander module can be computed from the knot group using Fox's free differential calculus. The knot groups for $K$ and $\spin(K)$ are identical. Therefore, so are their Alexander modules.
\end{proof} \begin{corollary}
    Let $K\hookrightarrow S^3$ be a knot. If $\Delta(K)\neq 1$, then for all $n\in\Z$, the roll-spin $\rho^n K$ does not bound a rational homology ball.
\end{corollary}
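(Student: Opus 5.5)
The plan is to argue by contraposition, using Lemma \ref{lem:alexander-roll} together with the topological obstruction recorded just before it. Namely, if a 2-knot $\cK$ bounds a rational homology ball, then $A(\cK)\otimes\Q=0$.

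First I will justify this obstruction briefly. Suppose $\cK$ bounds a rational homology ball $B$ embedded in $S^4$, so that $B$ is a Seifert solid with $H_*(B;\Q)\cong H_*(\mathrm{pt};\Q)$. The infinite cyclic cover $X_\infty(\cK)$ of the knot complement can then be built by the usual cut-and-paste construction: cut the complement along $B$ and glue $\Z$ copies of the resulting piece $V$ end to end. The Mayer--Vietoris sequence over $\Q$ has the form
\begin{equation*}
    H_1(B;\Q)\otimes\Lambda_\Q \xrightarrow{\;t\,i_+ - i_-\;} H_1(V;\Q)\otimes\Lambda_\Q \longrightarrow H_1(X_\infty;\Q)\longrightarrow \cdots
\end{equation*}
By Alexander duality, $H_1(V;\Q)\cong H_2(B;\Q)=0$, since $H_*(V)$ is dual to $H_*(B)$. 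Hence $H_1(X_\infty(\cK);\Q)=0$. The long exact sequence of the pair $(X_\infty,\pi^{-1}(x))$ then gives $A(\cK)\otimes\Q=0$.

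Second, I will apply Lemma \ref{lem:alexander-roll}, which gives $A(\rho^n K)\cong A(K)$ as $\Lambda_\Z$-modules. Consequently $A(\rho^n K)\otimes\Q\cong A(K)\otimes\Q$ as well.

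Third, I will show that $A(K)\otimes\Q\neq 0$ whenever $\Delta(K)\neq 1$. Since $\Delta(K)$ is principal and generated by $\Delta_K(T)$, the elementary ideal of $A(K)\otimes\Q$ over $\Lambda_\Q$ is generated by $\Delta_K$. We may normalize $\Delta_K(1)=\pm1$, so $\Delta_K$ is nonzero. If it is a nonunit in $\Lambda_\Q$ (that is, not $\pm T^k$), then the first elementary ideal is proper. By the characterization quoted before the lemma, this forces $A(K)\otimes\Q\neq0$.

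The main obstacle is the gap between ``$\Delta(K)\neq1$'' and ``$\Delta_K$ is a nonunit over $\Q$''. Any nonmonomial integer Laurent polynomial is a nonunit in $\Lambda_\Q$. The paper treats $\Delta_K$ as monic, so I will interpret $\Delta(K)\neq1$ as saying $\Delta_K$ is not a unit up to normalization. With that reading, combining the three steps shows $\rho^nK$ cannot bound a rational homology ball.
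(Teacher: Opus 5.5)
Your route is the one the paper intends. The corollary is read off from Lemma~\ref{lem:alexander-roll} together with the rational obstruction stated just before it, and your Mayer--Vietoris/Alexander duality argument is a reasonable way to justify that obstruction.

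However, the last sentence of your first step is false as written. With $A(\cK)=H_1(X_\infty,\pi^{-1}(x))$, the long exact sequence of the pair reads $0\to H_1(X_\infty)\to A(\cK)\to H_0(\pi^{-1}(x))\cong\Lambda_\Z\to H_0(X_\infty)\cong\Z\to 0$. The kernel of the augmentation is the free module $(T-1)\Lambda_\Z$, so $A(\cK)\cong H_1(X_\infty)\oplus\Lambda_\Z$. Hence $A(\cK)\otimes\Q$ is never zero, already for the unknot, which bounds $B^3$. Taken literally, your first and third steps would show that no 2-knot bounds a rational homology ball. The paper's sentence before the lemma has the same slip.

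The repair is to work with $H_1(X_\infty(\cK);\Q)$ and the Alexander ideal rather than with $A(\cK)\otimes\Q$:
\begin{itemize}
\item State the obstruction as $H_1(X_\infty(\cK);\Q)=0$. This is what your Mayer--Vietoris argument actually proves, once you add that the map on the $H_0$-terms is multiplication by $T-1$ on $\Lambda_\Q$ and hence injective.
\item The first elementary ideal of $H_1(X_\infty)\oplus\Lambda_\Z$ equals the zeroth elementary ideal of $H_1(X_\infty)$. So the obstruction is equivalent to $\Delta(\cK)\otimes\Q=\Lambda_\Q$.
\item The lemma gives $\Delta(\rho^nK)=\Delta(K)=(\Delta_K)$, since elementary ideals depend only on the module. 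Your third step then goes through with ideals in place of modules.
\end{itemize}

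The ``main obstacle'' you flag at the end is not one. $\Delta(K)\neq 1$ says exactly that $\Delta_K\neq\pm T^k$. The normalization $\Delta_K(1)=\pm1$, which you already invoke, excludes non-unit constant multiples, so $\Delta_K$ is a non-unit of $\Lambda_\Q$. You should not rely on monicity, which fails in general: the knot $5_2$ has Alexander polynomial $2T^2-3T+2$.
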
 
Now, we move on to twist-roll-spins.
\begin{proposition}\label{prop:alexander-twist-roll}
    Let $K\hookrightarrow S^3$ be a knot. For all $m,n\in \Z$ such that $m\neq 0$, the twist-roll-spin $\tau^m\rho^n K$ bounds a rational homology ball if and only if the fiber is itself a rational homology ball, i.e., if and only if the Alexander polynomial $\Delta_K$ does not have any zeroes at $m$-th roots of unity.
\end{proposition}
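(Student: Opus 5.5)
The plan is to show that both conditions in the statement are equivalent to the vanishing of $H_1(X_\infty(\tau^m\rho^n K);\Q)$, the rational first homology of the infinite cyclic cover of the knot complement. Throughout, write $\cK=\tau^m\rho^n K$ and let $F^\circ$ be its fiber, a punctured closed 3-manifold $F$. I will not use the Alexander module $A(\cK)$ as stated: by the long exact sequence of the pair $(X_\infty,\pi^{-1}(x))$, the relative group contains a free $\Lambda$-summand coming from $H_0(\pi^{-1}(x))$. So the condition I track is really on the absolute group $H_1(X_\infty;\Q)$.

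The easy direction is immediate. If $F^\circ$ is a rational homology ball, then it is a Seifert solid for $\cK$ that is a rational homology ball, so $\cK$ bounds one.

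For the converse, suppose $\cK$ bounds a Seifert solid $Y^\circ$ that is a rational homology ball. The first step is to show $H_1(X_\infty;\Q)=0$.
\begin{enumerate}
\item Let $C$ be the complement of an open bicollar of $Y^\circ$ in $S^4\setminus\nu(\cK)$. By Alexander duality (or Mayer--Vietoris with $S^4$), $C$ has the rational homology of a point, since $Y^\circ$ does.
\item The infinite cyclic cover $X_\infty$ is the union of copies $C_k$, $k\in\Z$, glued along copies of $Y^\circ$.
\item In the Mayer--Vietoris sequence for this decomposition, $\bigoplus_k H_1(C_k;\Q)=0$ and $\bigoplus_k H_1(Y^\circ_k;\Q)=0$.
\item The map $\bigoplus_k H_0(Y^\circ_k;\Q)\to\bigoplus_k H_0(C_k;\Q)$ is the one attached to a line graph, which is injective. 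Hence $H_1(X_\infty;\Q)=0$.
\end{enumerate}

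The second step uses that $\cK$ is fibered for $m\neq 0$. Then $X_\infty\simeq F^\circ\times\R$, so $H_1(F^\circ;\Q)=0$. Since $F^\circ$ is a punctured closed orientable 3-manifold $F$, Poincaré duality gives $H_*(F;\Q)\cong H_*(S^3;\Q)$. Thus $F^\circ$ is a rational homology ball.

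The third step is the last equivalence, with the Alexander polynomial.
\begin{enumerate}
\item The fiber $F$ is obtained from $\Sigma_m(K)$ by Dehn surgery on a nullhomologous knot with nonzero framing. This changes no rational Betti numbers, so $b_1(F)=b_1(\Sigma_m(K))$.
\item By Fox's formula \cite[\textsection 6]{fox-iii}, $|H_1(\Sigma_m(K))|=\bigl|\prod_{\zeta^m=1,\ \zeta\neq1}\Delta_K(\zeta)\bigr|$, with infinite order exactly when some factor vanishes. So $b_1(\Sigma_m(K))=0$ if and only if $\Delta_K$ has no zero at an $m$-th root of unity. Since $\Delta_K(1)=\pm1$, the root $\zeta=1$ never contributes.
\end{enumerate}

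The only delicate point is the bookkeeping for the twisted-roll fiber in the first part of the third step. I expect it to follow directly from Litherland's description \cite{litherland}, which the paper has already quoted.
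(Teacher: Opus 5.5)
Your argument is correct, but it takes a different route from the paper's proof.

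\textbf{The paper's route.} It proves the obstruction direction algebraically. It starts from Litherland's presentation of $\pi(\tau^m\rho^nK)$, in which $x_1^m\lambda^n$ is made central. It then computes an Alexander matrix by Fox calculus, row-reduces it, and uses Lemma~\ref{lem:alexander-roll} to obtain the containment $\Delta(\tau^m\rho^nK)\subseteq(\Delta_K(T),T^m-1)$. A common zero of $\Delta_K$ and $T^m-1$ therefore makes the Alexander ideal rationally proper. The converse direction is just the previously quoted fact that the fiber is a rational homology ball when no such zero exists.

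\textbf{Your route.} You never use a group presentation. Your Mayer--Vietoris argument shows that any 2-knot with a rational-homology-ball Seifert solid has $H_1(X_\infty;\Q)=0$. Fiberedness identifies $X_\infty$ with $F^\circ\times\R$ up to homotopy, so the fiber is forced to be a rational homology ball. After that, only the Litherland--Fox computation of $b_1$ of the fiber is needed.

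\textbf{What each buys.} Your approach gives a shorter, more conceptual proof of a general principle: a fibered 2-knot bounds a rational homology ball if and only if its fiber is one. It dispenses with the roll-spin lemma. It also correctly repairs the paper's loose phrasing. As you note, $H_1(X_\infty,\pi^{-1}(x))\cong H_1(X_\infty)\oplus\Lambda_\Z$ is never rationally zero, so the relevant condition is on the absolute group, or equivalently on the Alexander ideal, which is how the paper's proof actually uses it.

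The price is that you rely on the surgery description of the fiber in both directions of the comparison with $\Delta_K$. The paper's obstruction direction uses only the knot group. It also yields an explicit ideal-theoretic containment that does not depend on the fibration at all.
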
\begin{proof}
    Wihtout loss of generality, assume $m>0,n\geq 0$. We can use Fox's free differential calculus to relate the Alexander ideals $\Delta(\tau^m\rho^n K)$ and $\Delta(\rho^n K)$, the latter of which is the same as $\Delta(K)$ by Lemma \ref{lem:alexander-roll}. Consider a Wirtinger presentation for the knot group $\pi K$, say \begin{equation*}
        \pi K = \langle x_1,\ldots, x_M | r_1,\ldots, r_N \rangle.
    \end{equation*}
    A Wirtinger presentation for the knot group $\pi(\tau^m\rho^n K)$ is then \begin{equation*}
        \pi(\tau^m\rho^n K) = \langle x_1,\ldots, x_M | r_1,\ldots, r_N, [x_1^m \lambda^n, x_1], \ldots, [x_1^m \lambda^n, x_M] \rangle
    \end{equation*}
    where $\lambda\in F_M$ is a word representing a longitude of $K$. Let $\partial_i$ denote the Fox derivative on the free group $F_M$ with respect to the generator $x_i$. Write $[g]$ for the image of an element $g\in \Z F_M$ under the exponent-sum map $\Z F_M \to \Lambda_\Z$ (which factors through the abelization map $\Z\pi K\to \Lambda_\Z$).

    Let $w = x_1^m\lambda^n$. Differentiating the commutator $[w, x_j]$ yields \begin{equation}\label{eq:commutator-derivative}
        \begin{split}
            \partial_i [w, x_j] = (1 - wx_jw^{-1})\partial_iw + \delta_{ij}(w - [w,x_j]).
        \end{split}
    \end{equation}
    The image of $\partial_i w$ in $\Lambda_\Z$ is $\delta_{i1}(1+T+T^2+\ldots+T^{m-1}) + nT^m[\partial_i \lambda]$. Therefore, \eqref{eq:commutator-derivative} maps to \begin{equation*}
        \begin{split}
            [\partial_i [w, x_j]] &= (1-T)(\delta_{i1}(1+T+T^2+\ldots+T^{m-1}) + nT^m[\partial_i \lambda]) + \delta_{ij} (T^m - 1) \\
            &= (\delta_{ij}-\delta_{i1})(T^m-1) + nT^m(T-1)[\partial_i \lambda].
        \end{split}
    \end{equation*}
    Let $f(T) = T^m(1-T)$. An Alexander matrix for $\tau^m\rho^n K$ is \begin{equation}\label{eq:alexander-matrix-1} { \small
        \begin{bNiceMatrix}
            \Block{4-5}{R} & & & & \\
            & & & & \\
            & & & & \\
            & & & & \\

            nf(T)[\partial_i\lambda] & nf(T)[\partial_2\lambda] & nf(T)[\partial_3\lambda] & \cdots & nf(T)[\partial_M\lambda] \\
            -(T^m-1) + nf(T)[\partial_1\lambda] & (T^m-1) + nf(T)[\partial_2\lambda] & nf(T)[\partial_3\lambda] & \cdots & nf(T)[\partial_M\lambda] \\
            -(T^m-1) + nf(T)[\partial_1\lambda] & nf(T)[\partial_2\lambda] & (T^m-1) + nf(T)[\partial_3\lambda] & \cdots & nf(T)[\partial_M\lambda] \\
            \vdots & \vdots & \vdots & \ddots & \vdots \\
            -(T^m-1) + nf(T)[\partial_1\lambda] & nf(T)[\partial_2\lambda] & nf(T)[\partial_3\lambda] & \cdots & (T^m-1) + nf(T)[\partial_M\lambda] \\
        \end{bNiceMatrix} }
    \end{equation}
    where the block $R$ is an Alexander matrix for $K$. By definition, the Alexander ideal $\Delta(\tau^m\rho^n K)$ is generated by the $(M-1)\times (M-1)$ minors of \eqref{eq:alexander-matrix-1}. By elementary row operations, we can transform \eqref{eq:alexander-matrix-1} into \begin{equation}\label{eq:alexander-matrix-2}
        \begin{bNiceMatrix}
            \Block{4-5}{R} & & & & \\
            & & & & \\
            & & & & \\
            & & & & \\
            
            nf(T)[\partial_i\lambda] & nf(T)[\partial_2\lambda] & nf(T)[\partial_3\lambda] & \cdots & nf(T)[\partial_M\lambda] \\
            -(T^m - 1) & (T^m - 1) & 0 & \cdots & 0 \\
            -(T^m-1)  & 0 & (T^m-1) & \cdots & 0 \\
            \vdots & \vdots & \vdots &\ \ddots & \vdots \\
            -(T^m-1) & 0 & 0 & \cdots & (T^m-1) 
        \end{bNiceMatrix}.
    \end{equation}
    Observe that the block \begin{equation}\label{eq:alexander-block}
        \begin{bNiceMatrix}
            \Block{4-5}{R} & & & & \\
            & & & & \\
            & & & & \\
            & & & & \\

            nf(T)[\partial_i\lambda] & nf(T)[\partial_2\lambda] & nf(T)[\partial_3\lambda] & \cdots & nf(T)[\partial_M\lambda] \\
        \end{bNiceMatrix}
    \end{equation}
    is an Alexander matrix for $\rho^n K$. By Lemma \ref{lem:alexander-roll}, the matrix \eqref{eq:alexander-block} is also an Alexander matrix for $K$. Since all minors of \eqref{eq:alexander-matrix-2} are either minors of \eqref{eq:alexander-block} or multiples of $T^m-1$, we have an inclusion of ideals $\Delta(\tau^m\rho^n K)\subseteq (\Delta_K(T),T^m-1)$. Suppose that $\Delta_K(T)$ has a zero at a complex root of unity. Then, $(\Delta_K(T), T^m-1)\otimes \Q$ is a proper ideal of $\Lambda_\Q$, and therefore so is $\Delta(\tau^m\rho^n K) \otimes \Q$. It follows that $\tau^m\rho^n K$ does not bound a rational homology ball. Conversely, if $\Delta_K(T)$ has no zeroes at complex roots of unity, then the fiber is a rational homology ball as discussed earlier in this section.
\end{proof} 

\begin{remark}
    Given a particular 2-knot $\cS\hookrightarrow S^4$ bounding a Seifert solid $Y^\circ\hookrightarrow S^4$, we can build other 2-knots bounding embeddings of $Y^\circ$ by performing surgeries on $S^4 \setminus Y^\circ$ which cancel as surgeries on $S^4$, in a way that is reminiscient of constructions of knot concordances.

    If we only perform 0- and 1-surgeries, the resulting 2-knot is ribbon-concordant to $\cS$. Such a surgery can be specified up to framing by an integer $n$ and a choice of elements \begin{equation*}
        g_1,\ldots,g_n\in \pi_1((S^4 \setminus Y^\circ ) \# (S^1\times S^3)^{\# n}) = \pi_1(S^4 \setminus Y) * F_n
    \end{equation*}
    mapping to a basis of the free group $F_n$ under the inclusion-induced map \begin{equation*}
        \pi_1((S^4 \setminus Y^\circ) \# (S^1\times S^3)^{\# n}) \to \pi_1(S^4\# (S^1\times S^3)^{\# n}) = F_n.
    \end{equation*}
\end{remark}

Having discussed some examples of 2-knots that bound rational homology balls and some that do not, let us turn to computations of $|\deg|$ using Theorem \ref{thm:deg-formula}. The 4-dimensional invariant $|\deg|$ is closely related to an invariant of 3-dimensional knots which goes by the same name. According to \cite[Proposition~4.30]{miyazawa}, if $K\hookrightarrow S^3$ is a knot, then $|\deg(\tau^m\rho^n K)|=|\deg(K)|$ for all $m,n\in\Z$ such that $m+2n\equiv 2\pmod 4$. As a direct application of Theorem \ref{thm:deg-formula}, we can compute the 3-dimensional invariant $|\deg(K)|$ where $K$ is a Montesinos knot or a torus knot, by way of the 2-twist-spin $\tau^2 K$. Note that these computations can already be done in other ways, for example using the formulas of \cite{kang-park-taniguchi}.

If $\cS = \tau^2 K$, then $\tildeX(\cS) = S^1 \times\Sigma_2(K)$. The deck transformation on $\tildeX(\cS)$ rotates the $S^1$ factors halfway and applies the 3-diemensional deck transformation to the $\Sigma_2(K)$ factor.

\begin{example}[Montesinos Knots]\label{ex:montesinos}
    Suppose that $Y=M(b_0,(a_1,b_1),\ldots,(a_n,b_n))$ is a Seifert-fibered rational homology sphere, which we may view as an orbifold circle bundle over the cone orbifold $\cO = S^2(a_1,\ldots,a_n)$. Suppose $Y$ is equipped with its Thurston geometry. Suppose further that $|H_1(Y;\Z)|$ is odd, and equip $Y$ with its unique $\spin$ structure $\fs_Y$.
    
    Position the cone points of $\cO$ along the equator of $S^2$. Note that this implicitly fixes an order of the cone points up to cyclic and anti-cyclic permutations. Then, complex conjugation on $\CP^1$ induces an orientation-reversing involution of $\cO$ which fixes the cone points. This involution lifts to an orientation-preserving involution $\iota_Y$ (the \emph{Montesinos involution}) on $Y$ which reverses the circle action, realizing $Y$ as the double branched cover of a Montesinos link in $S^3$. The involution $\iota_Y$ in turn induces an involution $\iota_{Y*}$ on the various flavors of monopole Floer homology (preserving the part coming from the $\spin$ structure), which turns out to be the same as the complex conjugation involution $\jmath_*$. For Seifert fibered manifolds, the involution $\iota_{Y*}=\jmath_*$ can be understood geometrically at the level of spinors using the description of the moduli spaces in \cite{mrowka-ozsvath-yu}, or combinatorially as the involution induced by reflection in a graded root as in \cite{dai}
    
    Consider the mapping torus $X = S^1\times_{\iota_Y} Y$. The unique double cover of $X$ is $\tildeX = S^1 \times Y$ with deck transformation $\iota_{\tildeX}(e^{i\theta},y) = (e^{i(\theta+\pi)},\iota_Y(y))$.  Since $X$ is a homology $S^1\times S^3$ with a separating rational homology sphere, we can apply Theorem \ref{thm:deg-formula} to compute $|\deg(\tildeX)|$. The cobordism $W$ is the cylinder $I\times Y$ twisted by the involution $\iota_Y$. We have $W_*=\iota_{Y*}=\jmath_*$ and so $\jmath_*(W_*-1)=1-\iota_{Y*}$. If we pass to $\R$ coefficients, the Lefschetz number of $1-\iota_{Y*}$ is twice the Euler characteristic of the $\iota_{Y*}$-anti-invariant part. Theorem \ref{thm:deg-formula} gives \begin{equation*}
        |\deg(\tildeX)| = |1 + 4\chi(\HMred(Y,\fs_Y;\R)^{-\iota_{Y*}})|
    \end{equation*}

    As a specific family of examples, consider the Brieskorn homology spheres $Y=\Sigma(2,3,r)$ where $(6,r)=1$. The (in this case unique) Montesinos link associated to $Y$ is a knot $K=K(2,3,r)$, and $|\deg(\tildeX)|=|\deg(K)|$, where for a knot $K$, $|\deg(K)|$ is defined to be the count of real Seiberg-Witten solutions over the double branched cover with its unique real $\spin$ structure. If $r=12k+1$ or $12k+5$ where $k\geq 0$, then $\HMred(Y,\fs_Y)$ is free of rank $2k$ concentrated in even $\Z/2$ grading and $\iota_{Y*}$ acts by freely permuting a basis. We have $\chi(\HMred(Y,\fs_Y;\R)^{-\iota_{Y*}})=k$ and therefore \begin{equation*}
        |\deg(K(2,3,12k+1\text{ or }12k+5))| = 4k+1.
    \end{equation*}
    If $r=12k-1$ or $12k-5$, then $\HMred(Y,\fs_Y)$ is free of rank $2k-1$, concentrated in odd $\Z/2$ grading. There is an isomorphism of $\HMred(Y,\fs_Y)$ with $\Z^{2k}/\langle(1,1,\ldots,1)\rangle$ such that $\iota_{Y*}$ is induced by an involution of $\Z^{2k}$ which freely permutes the standard basis. We have $\chi(\HMred(Y,\fs_Y;\R)^{-\iota_{Y*}})=-k$ and therefore \begin{equation*}
        |\deg(K(2,3,12k-1\text{ or }12k-5))| = 4k-1.
    \end{equation*}
    This family of examples was worked out in a different way in \cite{kang-park-taniguchi}.

\end{example}\begin{example}[Torus Knots]
    Suppose that $p,q$ are odd coprime integers and consider the torus knot $T(p,q)$. The double branched cover of $T(p,q)$ is the Brieskorn homology sphere $\Sigma(2,p,q)$. Whereas in the previous example the involution on $Y$ was given by complex conjugation, in this example the involution $\iota_Y$ comes from the circle action. The induced automorphism $\iota_{Y*}$ of $\HMred(Y,\fs_Y)$ is in this case the identity.

    Again, we may consider the mapping torus $X=S^1\times_{\iota_Y} Y$ and its double cover $\tildeX$. In this case, $W_*=\iota_{Y*} = 1$, so Theorem \ref{thm:deg-formula} gives \begin{equation*}
        |\deg(T(p,q))| = |\deg(\tildeX)| = |1 + \Tr(\jmath_*(1-1))| = 1.
    \end{equation*}
    Again, this family of examples was worked out in a different way in \cite{kang-park-taniguchi}.
\end{example}

By Example \ref{ex:montesinos}, $|\deg(K(2,3,r))|=1$ if and only if either $r=1$, in which case $K(2,3,r)$ is the unknot, or $r=5$, in which case $K(2,3,r)$ is the torus knot $T(3,5)$. Combining this with Corollary \ref{cor:4d-l-space} yields the following. \begin{corollary}
    For all $r\in \Z$ such that $(r,6)=1$ and $r>5$ and for all $m,n\in\Z$ such that $m+2n\equiv 2\pmod 4$, the twist-roll-spin $\tau^m\rho^n K(2,3,6s+1)$ does not bound a punctured L-space in $S^4$.
\end{corollary}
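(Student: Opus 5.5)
The plan is to argue by contradiction, combining three facts: Miyazawa's invariance of $|\deg|$ under twist-roll-spinning, the computation in Example \ref{ex:montesinos}, and Corollary \ref{cor:4d-l-space}. We read the knot in the statement as $K = K(2,3,r)$, the Montesinos knot whose double branched cover is $\Sigma(2,3,r)$; the subscript $6s+1$ looks like a typo for $r$.

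First, fix $r$ with $(r,6)=1$ and $r>5$, and fix $m,n$ with $m+2n\equiv 2\pmod 4$. By \cite[Proposition~4.30]{miyazawa}, as quoted before Example \ref{ex:montesinos}, we have
\begin{equation*}
|\deg(\tau^m\rho^n K)| = |\deg(K)|.
\end{equation*}
Suppose, for contradiction, that $\tau^m\rho^n K$ bounds a punctured L-space embedded in $S^4$. Then Corollary \ref{cor:4d-l-space} gives $|\deg(\tau^m\rho^n K)| = 1$, and hence $|\deg(K)| = 1$.

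Second, we compute $|\deg(K)|$ from Example \ref{ex:montesinos}. Since $(r,6)=1$, every such $r$ has one of the forms $12k+1$, $12k+5$, $12k-1$ or $12k-5$. The example gives $|\deg(K)| = 4k+1$ in the first two cases and $|\deg(K)| = 4k-1$ in the last two. We then check that $r>5$ forces $|\deg(K)|\neq 1$.
\begin{itemize}
\item If $r = 12k+1$ or $12k+5$ with $r>5$, then $k\geq 1$, so $|\deg(K)| = 4k+1 \geq 5$.
\item If $r = 12k-1$ or $12k-5$ with $r>0$, then $k\geq 1$, so $|\deg(K)| = 4k-1 \geq 3$.
\end{itemize}
In every case $|\deg(K)|\neq 1$, contradicting the first step. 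This reproduces the remark preceding the corollary: among $r \geq 1$, the value $|\deg(K(2,3,r))| = 1$ occurs only for $r = 1$ and $r = 5$.

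The only real content lies in the inputs: Miyazawa's proposition, and the Montesinos example with its description of $\HMred(\Sigma(2,3,r))$ and the action $\iota_{Y*} = \jmath_*$ on it. Beyond that, the main thing to watch is the bookkeeping of which residue class and which value of $k$ each $r>5$ falls into. In particular, the smallest admissible cases are $r=7 = 12\cdot 1 - 5$ and $r = 11 = 12\cdot 1 - 1$, which both give $k=1$ and $|\deg(K)| = 3$, not $1$.
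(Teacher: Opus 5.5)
Your proof is correct and follows the paper's route: Miyazawa's Proposition~4.30 transfers $|\deg|$ from $\tau^m\rho^n K$ to $K$, and Corollary~\ref{cor:4d-l-space} combined with the values $4k\pm 1$ from Example~\ref{ex:montesinos} gives a contradiction, since these equal $1$ only for $r=1,5$. Your residue-class bookkeeping mod $12$, and reading $6s+1$ as $r$, make explicit what the paper states in one line.
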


}



\bibliographystyle{alpha}
\bibliography{biblio}

@article {atiyah-bott,
    AUTHOR = {Atiyah, M. F. and Bott, R.},
     TITLE = {A {L}efschetz fixed point formula for elliptic complexes.
              {II}. {A}pplications},
   JOURNAL = {Ann. of Math. (2)},
  FJOURNAL = {Annals of Mathematics. Second Series},
    VOLUME = {88},
      YEAR = {1968},
     PAGES = {451--491},
      ISSN = {0003-486X},
   MRCLASS = {57.50},
  MRNUMBER = {232406},
MRREVIEWER = {R.\ S.\ Palais},
       DOI = {10.2307/1970721},
       URL = {https://doi.org/10.2307/1970721},
}

@article {bloom,
    AUTHOR = {Bloom, Jonathan M.},
     TITLE = {A link surgery spectral sequence in monopole {F}loer homology},
   JOURNAL = {Adv. Math.},
  FJOURNAL = {Advances in Mathematics},
    VOLUME = {226},
      YEAR = {2011},
    NUMBER = {4},
     PAGES = {3216--3281},
      ISSN = {0001-8708,1090-2082},
   MRCLASS = {57R58 (18G40 57M27)},
  MRNUMBER = {2764887},
MRREVIEWER = {Brendan\ E.\ Owens},
       DOI = {10.1016/j.aim.2010.10.014},
       URL = {https://doi.org/10.1016/j.aim.2010.10.014},
}

@article {dai,
    AUTHOR = {Dai, Irving},
     TITLE = {On the {${\rm Pin}(2)$}-equivariant monopole {F}loer homology
              of plumbed 3-manifolds},
   JOURNAL = {Michigan Math. J.},
  FJOURNAL = {Michigan Mathematical Journal},
    VOLUME = {67},
      YEAR = {2018},
    NUMBER = {2},
     PAGES = {423--447},
      ISSN = {0026-2285,1945-2365},
   MRCLASS = {57M27 (57R58)},
  MRNUMBER = {3802260},
MRREVIEWER = {Matthew\ Stoffregen},
       DOI = {10.1307/mmj/1523498585},
       URL = {https://doi.org/10.1307/mmj/1523498585},
}

@article {fox-iii,
    AUTHOR = {Fox, Ralph H.},
     TITLE = {Free differential calculus. {III}. {S}ubgroups},
   JOURNAL = {Ann. of Math. (2)},
  FJOURNAL = {Annals of Mathematics. Second Series},
    VOLUME = {64},
      YEAR = {1956},
     PAGES = {407--419},
      ISSN = {0003-486X},
   MRCLASS = {20.00 (55.00)},
  MRNUMBER = {95876},
MRREVIEWER = {R.\ C.\ Lyndon},
       DOI = {10.2307/1969592},
       URL = {https://doi.org/10.2307/1969592},
}

@article {f-lin,
    AUTHOR = {Lin, Francesco},
     TITLE = {A {M}orse-{B}ott approach to monopole {F}loer homology and the
              triangulation conjecture},
   JOURNAL = {Mem. Amer. Math. Soc.},
  FJOURNAL = {Memoirs of the American Mathematical Society},
    VOLUME = {255},
      YEAR = {2018},
    NUMBER = {1221},
     PAGES = {v+162},
      ISSN = {0065-9266,1947-6221},
      ISBN = {978-1-4704-2963-8; 978-1-4704-4819-6},
   MRCLASS = {57R58 (57M27)},
  MRNUMBER = {3827053},
MRREVIEWER = {Jianfeng\ Lin},
       DOI = {10.1090/memo/1221},
       URL = {https://doi.org/10.1090/memo/1221},
}

@article {j-lin,
    AUTHOR = {Lin, Jianfeng},
     TITLE = {The {S}eiberg-{W}itten equations on end-periodic manifolds and
              an obstruction to positive scalar curvature metrics},
   JOURNAL = {J. Topol.},
  FJOURNAL = {Journal of Topology},
    VOLUME = {12},
      YEAR = {2019},
    NUMBER = {2},
     PAGES = {328--371},
      ISSN = {1753-8416,1753-8424},
   MRCLASS = {57R57 (53C21 57K16 57R58)},
  MRNUMBER = {3911569},
MRREVIEWER = {Stefano\ Vidussi},
       DOI = {10.1112/topo.12090},
       URL = {https://doi.org/10.1112/topo.12090},
}

@article {litherland,
    AUTHOR = {Litherland, R. A.},
     TITLE = {Deforming twist-spun knots},
   JOURNAL = {Trans. Amer. Math. Soc.},
  FJOURNAL = {Transactions of the American Mathematical Society},
    VOLUME = {250},
      YEAR = {1979},
     PAGES = {311--331},
      ISSN = {0002-9947,1088-6850},
   MRCLASS = {57Q45},
  MRNUMBER = {530058},
MRREVIEWER = {Kenneth\ A.\ Perko, Jr.},
       DOI = {10.2307/1998993},
       URL = {https://doi.org/10.2307/1998993},
}

@article {konno-miyazawa-taniguchi,
    AUTHOR = {Konno, Hokuto and Miyazawa, Jin and Taniguchi, Masaki},
     TITLE = {Involutions, knots, and {F}loer {$K$}-theory},
   JOURNAL = {Compos. Math.},
  FJOURNAL = {Compositio Mathematica},
    VOLUME = {161},
      YEAR = {2025},
    NUMBER = {11},
     PAGES = {2852--2910},
      ISSN = {0010-437X,1570-5846},
   MRCLASS = {57K10 (57R58)},
  MRNUMBER = {5002153},
       DOI = {10.1112/S0010437X25102820},
       URL = {https://doi.org/10.1112/S0010437X25102820},
}

@article {lin-ruberman-saveliev,
    AUTHOR = {Lin, Jianfeng and Ruberman, Daniel and Saveliev, Nikolai},
     TITLE = {A splitting theorem for the {S}eiberg-{W}itten invariant of a
              homology {$S^1\times S^3$}},
   JOURNAL = {Geom. Topol.},
  FJOURNAL = {Geometry \& Topology},
    VOLUME = {22},
      YEAR = {2018},
    NUMBER = {5},
     PAGES = {2865--2942},
      ISSN = {1465-3060,1364-0380},
   MRCLASS = {57R57 (53C21 57M27 57R58 58J28)},
  MRNUMBER = {3811774},
MRREVIEWER = {Tirasan\ Khandhawit},
       DOI = {10.2140/gt.2018.22.2865},
       URL = {https://doi.org/10.2140/gt.2018.22.2865},
}

@article {mrowka-ozsvath-yu,
    AUTHOR = {Mrowka, Tomasz and Ozsv\'ath, Peter and Yu, Baozhen},
     TITLE = {Seiberg-{W}itten monopoles on {S}eifert fibered spaces},
   JOURNAL = {Comm. Anal. Geom.},
  FJOURNAL = {Communications in Analysis and Geometry},
    VOLUME = {5},
      YEAR = {1997},
    NUMBER = {4},
     PAGES = {685--791},
      ISSN = {1019-8385,1944-9992},
   MRCLASS = {58D27 (53C07 57R57 58E15 58G05)},
  MRNUMBER = {1611061},
MRREVIEWER = {Liviu\ I.\ Nicolaescu},
       DOI = {10.4310/CAG.1997.v5.n4.a3},
       URL = {https://doi.org/10.4310/CAG.1997.v5.n4.a3},
}

@article {n-nakamura,
    AUTHOR = {Nakamura, Nobuhiro},
     TITLE = {{$\mathrm{Pin}^-(2)$}-monopole invariants},
   JOURNAL = {J. Differential Geom.},
  FJOURNAL = {Journal of Differential Geometry},
    VOLUME = {101},
      YEAR = {2015},
    NUMBER = {3},
     PAGES = {507--549},
      ISSN = {0022-040X,1945-743X},
   MRCLASS = {57N13 (57R57)},
  MRNUMBER = {3415770},
MRREVIEWER = {Raphael\ Zentner},
       URL = {http://projecteuclid.org/euclid.jdg/1445518922},
}

@book {carter-saito,
    AUTHOR = {Carter, Scott and Kamada, Seiichi and Saito, Masahico},
     TITLE = {Surfaces in 4-space},
    SERIES = {Encyclopaedia of Mathematical Sciences},
    VOLUME = {142},
      NOTE = {Low-Dimensional Topology, III},
 PUBLISHER = {Springer-Verlag, Berlin},
      YEAR = {2004},
     PAGES = {xiv+213},
      ISBN = {3-540-21040-7},
   MRCLASS = {57Q45 (57M25 57R40)},
  MRNUMBER = {2060067},
MRREVIEWER = {Sergej\ V.\ Matveev},
       DOI = {10.1007/978-3-662-10162-9},
       URL = {https://doi.org/10.1007/978-3-662-10162-9},
}

@book {kronheimer-mrowka,
    AUTHOR = {Kronheimer, Peter and Mrowka, Tomasz},
     TITLE = {Monopoles and three-manifolds},
    SERIES = {New Mathematical Monographs},
    VOLUME = {10},
 PUBLISHER = {Cambridge University Press, Cambridge},
      YEAR = {2007},
     PAGES = {xii+796},
      ISBN = {978-0-521-88022-0},
   MRCLASS = {57R57 (53C27 57N10 57R58)},
  MRNUMBER = {2388043},
MRREVIEWER = {Vicente\ Mu\~noz},
       DOI = {10.1017/CBO9780511543111},
       URL = {https://doi.org/10.1017/CBO9780511543111},
}

@incollection {austin-braam,
    AUTHOR = {Austin, D. M. and Braam, P. J.},
     TITLE = {Morse-{B}ott theory and equivariant cohomology},
 BOOKTITLE = {The {F}loer memorial volume},
    SERIES = {Progr. Math.},
    VOLUME = {133},
     PAGES = {123--183},
 PUBLISHER = {Birkh\"auser, Basel},
      YEAR = {1995},
      ISBN = {3-7643-5044-X},
   MRCLASS = {57R70 (55N35 55N91 58E05)},
  MRNUMBER = {1362827},
MRREVIEWER = {Dave\ Auckly},
}

@unpublished{lipyanskiy,
      title = "Geometric Homology", 
      author = "Max Lipyanskiy",
      year = "2014",
      note = "preprint, \href{https://arxiv.org/abs/1409.1121}{arxiv:409.1121}"
}

@unpublished{miyazawa,
    author = "Miyazawa, Jin",
    title = "A gauge theoretic invariant of embedded surfaces in $4$-manifolds and exotic ${P}^2$-knots",
    year = "2023",
    note = "preprint, \href{https://arxiv.org/abs/2312.02041}{arXiv:2312.02041}"
}

@unpublished{miyazawa-park-taniguchi,
    author={Jin Miyazawa and JungHwan Park and Masaki Taniguchi},
    title="A satellite formula for real Seiberg-Witten Floer homotopy types", 
    year = "2025",
    note = "preprint, \href{https://arxiv.org/abs/2504.03270}{arXiv:2504.03270}"
}

@unpublished{kang-park-taniguchi,
    author = "Kang, Sungkyung and Park, JungHwan and Taniguchi, Masaki",
    title = "Cables of the figure-eight knot via real {F}r{\o}yshov invariants",
    year = "2024",
    note = "preprint, \href{https://arxiv.org/abs/2405.09295}{arxiv:2405.09295}"
}

\end{document}